\newtheorem{theo}{Theorem}[section]
\newtheorem{coro}[theo]{Corollary}
\newtheorem{pro}[theo]{Proposition}
\newtheorem{prob}[theo]{Problem}
\theoremstyle{definition}
\newtheorem{defin}[theo]{Definition}
\newtheorem{rem}[theo]{Remark}
\newtheorem{exam}[theo]{Example}
\def\cK{{{\mathcal K}}}
\def\N{\mathbb N}
\def\R{\mathbb R}
\def\eps{\varepsilon}
\def\epsilon{\eps}
\def\01^N{\{0,1\}^{n}}
\def\1^K{\{0,1\}^{k}}
\def\n+1pla{(i_1,\cdots,i_{n+1})}
\def\u^n+1{\{0,1\}^{n+1}}
\def\norm{\parallel \cdot \parallel}
\def\cero{0_X}
\def\ceroe{0_E}
\begin{document}

\begin{frontmatter}



\title{Denting Points of Convex Sets and Weak Property ($\pi$) of Cones in Locally Convex Spaces}


\author{Fernando García-Castaño\corref{cor1}}
\ead{Fernando.gc@ua.es} 
\cortext[cor1]{Corresponding author. Orcid: 0000-0002-8352-8235}
\author{M. A. Melguizo Padial}
\ead{ma.mp@ua.es}
\author{G. Parzanese}
\ead{gpzes@yahoo.it}
\address{Departamento de Matemática Aplicada\\ Escuela Politécnica
Superior\\ Universidad de Alicante\\ 03080 San Vicente del Raspeig, Alicante, Spain}

\begin{abstract}
In this paper we first extend from normed spaces to locally convex spaces some characterizations of denting points in convex sets. On the other hand, we also prove that in an infrabarreled locally convex space a point in a convex set is denting if and only if it is a point of continuity and an extreme point of the closure of such a convex set under the strong topology  in the second dual. The version for normed spaces of the former equivalence is new and contains, as particular cases, some known and remarkable results.  We also extend from  normed spaces to locally convex spaces some known characterizations of the weak property (\(\pi\)) of cones. Besides, we provide some new results regarding the angle property of cones and related. We also state that the class of cones in normed spaces having a pointed completion is the largest one for which the vertex is a denting point if and only if it is a point of continuity. Finally we analyse and answer several problems in the literature concerning geometric properties of cones which are related with density problems into vector optimization.
\end{abstract}

\begin{keyword}
Denting point \sep point of continuity \sep weak property (\(\pi\)) \sep angle property \sep base for a cone  

\MSC[2010] 46A55 \sep 46A08 \sep 46B20 \sep 46B22 \sep 46B40 
\end{keyword}

\end{frontmatter}


\section{Introduction}
Dentability was connected with the Radon-Nikod\'ym property in Banach spaces for the first time  in \cite{Rieffel:66:Irvine}. Later, the Radon-Nikod\'ym property has been studied into the context of locally convex spaces as can be seen, for example, in \cite{Chi1977,Egghe1978} and the references therein. A very remarkable result in the study of denting points is due to Lin-Lin-Troyanski in \cite{Lin1988}. This states that the notions of denting point and point of continuity become equivalent for extreme points in closed, convex, and bounded subsets of Banach spaces. The former result was generalized to normed spaces in \cite{GARCIACASTANO2018} as follows: denting points and points of continuity become equivalent for preserved extreme points in convex subsets of normed spaces. Let us note that a preserved extreme point is an extreme point of the closure under the weak star topology on the bidual space of a convex set. In Section \ref{Section_Convex_sets} we study denting points of convex sets in locally convex spaces. In order to generalize the results in \cite{GARCIACASTANO2018}  to locally convex spaces we have to introduce the notion of bounded denting point (which is stronger than denting point). This strengthening of the notions involved (stated in Definitions \ref{Defi_DentingPoint} and \ref{Defi_PointContinuity_y_boundedPC}) is, in some way, natural because working in normed spaces some things happen automatically but some of them generally fail in locally convex spaces --for example, in normed spaces bounded linear transformations are always continuous--. In Theorem~\ref{Teo_Generalizacion_dentabilidad_convexos_RACSAM} we prove (among other equivalences) that bounded denting points and bounded points of continuity become equivalent for preserved extreme points in convex subsets of locally convex spaces.  For infrabarreled locally convex spaces we state in Theorem~\ref{Teo_dentabilidad_convexos_nuevo} that bounded denting points and bounded points of continuity become equivalent for extreme points in the closure under the strong topology on the second dual of a convex set. An advantage of Theorem~\ref{Teo_dentabilidad_convexos_nuevo} regarding Theorem~\ref{Teo_Generalizacion_dentabilidad_convexos_RACSAM} is that the closure under the strong topology is smalller than the closure under the weak star topology. Furthermore, the version of Theorem~\ref{Teo_dentabilidad_convexos_nuevo} for normed spaces states that denting points and points of continuity become equivalent for extreme points in the completion of a convex set (Corollary~\ref{Tma_dentability_convex_nuevo}). Such a characterization is not contained in \cite{GARCIACASTANO2018} and it can be viewed as the natural generalization for normed spaces of the above mentioned characterization for denting points in Banach spaces  of Lin-Lin-Troyanski.

A cone is a particular type of convex set which plays an important role in vector optimization, operator equations, etc. Some properties of cones can be characterized by geometric properties of their vertex. Using the Lin-Lin-Troyanski's characterization above, Daniilidis proved in \cite{Daniilidis2000} that the vertex of a cone is denting if and only if it is a point of continuity for any pointed and closed cone in a Banach space. In \cite[page 629]{Gong1995}, Gong stated a new density result in vector optimization and asked if the condition that the vertex is a point of continuity for a pointed and closed cone in a normed space is really weaker than the vertex is a denting point for such a cone. On the other hand, in \cite{GARCIACASTANO20151178,GARCIACASTANO2018} some properties of cones in normed spaces were characterized by the property of the vertex being a denting point; some of them generalized Daniilidis characterization and also  Theorem 4 in \cite{Kountzakis2006}. In \cite{GARCIACASTANO20151178}, the authors stated that the vertex of a cone is a denting point if and only if it is a point of continuity and the closure of the cone under the weak star topology is pointed; in \cite{GARCIACASTANO2018} it was stated that the vertex of a cone is a denting point if and only if it is a point of continuity and the span of the dual cone is a dense subspace. In Section~\ref{Section_cones_LCS} of this paper we generalize to locally convex spaces the characterizations  for cones in \cite{GARCIACASTANO20151178,GARCIACASTANO2018} giving rise to new characterizations of the weak property \((\pi)\). For example, Theorem~\ref{Teo_Generalizacion_JMAA_RACSAM_lcs} states (among other equivalences) that a cone in a locally convex space satisfies the weak property \((\pi)\) if and only if the vertex is  a bounded point of continuity and the bidual cone is pointed if and only if the vertex is  a bounded point of continuity and the span of the dual cone is a dense subspace under the strong topology. It is worth pointing out that some characterizations in \cite{GARCIACASTANO2018} hold true in locally convex spaces under extra assumptions, as Theorem \ref{Teo_Condi_RACSAM_sufic_elc} shows. For infrabarreled spaces we provide Theorem \ref{Teo_weak_property_completitud_pointed_lcs} which states that a cone satisfies the weak property \((\pi)\) if and only if the vertex is  a bounded point of continuity and the closure of the cone under the strong topology on the second dual is pointed. On the other hand, Theorem \ref{Teo_Caract_strong_angle_property} provides new characterizations for cones in infrabarreled spaces enjoying the strong angle property, which completes those stated in Theorem 2.2 of \cite{Qiu2001}. We also obtain some dual results, in particular Theorem~\ref{Teo_Caract_strong_angle_property_dual} (resp. \ref{Tma_carac_dual_cone_barreled_elc} and \ref{Tma_neces_dual_cone_barreled_elc}) is the dual version of Theorem~\ref{Teo_Caract_strong_angle_property} (resp. \ref{Teo_Generalizacion_JMAA_RACSAM_lcs} and \ref{Teo_Condi_RACSAM_sufic_elc}).

In Section \ref{Section_cones_normed_spaces} we provide Theorem \ref{Teo_denting_equivalente_completitud_cone}, which is the version for normed spaces of Theorem \ref{Teo_weak_property_completitud_pointed_lcs}, and it states that the vertex of a cone is a denting point if and only if it is point of continuity and the completion of the cone is pointed. The former can be viewed as the natural generalization for normed spaces of Daniilidis characterization in \cite{Daniilidis2000}. On the other hand, Remark \ref{Remark_Clase_cones_mas_grande_PC_equiv_DP} states that the class of cones having a pointed completion is the largest one for which the vertex is a denting point if and only if it is a point of continuity. The former means that the class of cones having a pointed completion is the largest one for which Gong's question in \cite[page 629]{Gong1995} has a negative answer. Making use of an example in \cite{SONG2003308} we answer Gong's problem above mentioned and problems  \cite[page 624]{Gong1995} and \cite[Problem 1.7]{GARCIACASTANO20151178}.

\section{Denting points in convex sets}\label{Section_Convex_sets}
In this section we state some characterizations of denting points in convex sets in terms of the notion of point of continuity. In the first part we analyse how to extend from normed spaces to locally convex spaces some known results. Later, we state a characterization regarding denting points of convex sets in infrabarreled locally convex spaces whose corresponding version for normed spaces provides a new characterization which contains, as particular cases, some classic results in the literature.

Let $(E,\tau)$ denote a locally convex space (l.c.s.) and $x\in E$, we will denote by $\tau(x)$ the family of neighbourhoods of $x$ under the topology $\tau$ on $E$. To simplify the notation, we write $E$ instead of $(E,\tau)$ omitting the original topology $\tau$ unless it is indispensable to avoid confusion.  In this work, we only consider Hausdorff locally convex spaces. By $E^*$ we denote the dual of $E$ and $\sigma(E,E^*)$ denotes the weak topology on $E$. 
\begin{defin}\label{Defi_DentingPoint}
Let $E$ be a l.c.s. and $C\subset E$ a convex subset. 
\item[(i)] We say that $x\in C$ is a denting point of $C$, written $x \in \mbox{DP}(C)$, if for every $U \in \tau(x)$ we have $x \not \in \overline{\mbox{conv}}{(C\setminus U)}$.
\item[(ii)] We say that $x\in C$ is a bounded denting point of $C$, written $x \in \mbox{bDP}(C)$, if $x \in \mbox{DP}(C)$ and there exists $W\in \sigma(E,E^*)(x)$ such that $W\cap C$ is bounded.
\end{defin}
Given $x^* \in E^*$ and $\lambda \in \R$ we denote by $\{x^*<\lambda\}$ the following half-space in $E$, $\{x \in E\colon x^*(x)<\lambda\}$. Let $E$ be a l.c.s, it is clear that $x \in \mbox{DP}(C)$  if and only if for every $U \in \tau(x)$ there exist $x^* \in E^*$ and $\lambda \in \R$ such that $x \in \{x^*<\lambda\}\cap C \subset U$. Any set of the form $\{x^*<\lambda\}\cap C$ is called a $\sigma(E,E^*)$-slice of $C$.

In the following definition we extend the notion of strongly extreme point from normed spaces (see \cite{Guirao2014}) to locally convex spaces. 
\begin{defin}
Let $E$ be a l.c.s. and $C\subset E$ a convex subset. We say that $x\in C$ is a strongly extreme point  of $C$, written $x \in \mbox{s-ext}(C)$, if given two nets $(c_{\gamma})_{\gamma \in \Gamma}$ and $(c_{\gamma}')_{\gamma\in \Gamma}$ in $C$ such that $\lim_{\gamma}\frac{c_{\gamma}+c_{\gamma}'}{2}=x$, then $\lim_{\gamma}c_{\gamma}=x$. If $\lim_{\gamma}c_{\gamma}=x$ is taken under the topology $\sigma(E,E^*)$,  then we say that $x$ is a $\sigma(E,E^*)$-strongly extreme point  of $C$, written $x \in \sigma(E,E^*)\mbox{-s-ext}(C)$.
\end{defin}

\begin{pro}\label{Prop_denting_implies_strongly_extreme_convex}
Let $E$ be a l.c.s. and $C \subset E$ a convex subset. Then $\mbox{DP}(C)\subset \mbox{s-ext}(C)$. 
\end{pro}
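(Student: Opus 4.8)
The plan is to prove the contrapositive: if $x \in C$ is not a strongly extreme point of $C$, then $x$ is not a denting point. So suppose there exist nets $(c_\gamma)_{\gamma\in\Gamma}$ and $(c_\gamma')_{\gamma\in\Gamma}$ in $C$ with $\lim_\gamma \frac{c_\gamma+c_\gamma'}{2}=x$ but $c_\gamma \not\to x$. The failure of convergence means that, after passing to a subnet, there is a neighbourhood $U \in \tau(x)$ with $c_\gamma \notin U$ for all $\gamma$. My goal is to produce, from this data, a single $U_0 \in \tau(x)$ witnessing $x \in \overline{\mathrm{conv}}(C\setminus U_0)$, which contradicts $x\in \mathrm{DP}(C)$.

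First I would exploit local convexity to choose the neighbourhood carefully. Fix a convex, balanced neighbourhood $V \in \tau(0)$ such that $x + V \subset U$, so that $c_\gamma \notin x+V$ for all $\gamma$, i.e. $c_\gamma - x \notin V$. The key point is to relate the ``bad'' behaviour of $c_\gamma$ to the ``bad'' behaviour of $c_\gamma'$. Writing $m_\gamma := \frac{c_\gamma+c_\gamma'}{2} \to x$, we have $c_\gamma' = 2m_\gamma - c_\gamma$, hence $c_\gamma' - x = 2(m_\gamma - x) - (c_\gamma - x)$. Since $m_\gamma - x \to 0$, for large $\gamma$ the term $2(m_\gamma-x)$ lies in a small multiple of $V$; this forces $c_\gamma' - x$ to be bounded away from $0$ in roughly the same direction as $-(c_\gamma-x)$. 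The natural candidate for the witnessing neighbourhood is a slightly shrunken balanced neighbourhood $W := \frac{1}{2}V \in \tau(0)$, and I would argue that for $\gamma$ large both $c_\gamma$ and $c_\gamma'$ avoid $x+W$, so that $c_\gamma, c_\gamma' \in C \setminus (x+W)$.

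Once both nets eventually avoid a fixed neighbourhood of $x$, the conclusion is immediate: the midpoints $m_\gamma = \frac{c_\gamma+c_\gamma'}{2}$ are convex combinations of points of $C\setminus(x+W)$, hence lie in $\mathrm{conv}(C\setminus(x+W))$; since $m_\gamma \to x$, we get $x \in \overline{\mathrm{conv}}(C\setminus(x+W))$. Taking $U_0 := x+W \in \tau(x)$ contradicts $x \in \mathrm{DP}(C)$. This completes the contrapositive and shows $\mathrm{DP}(C)\subset \mathrm{s\text{-}ext}(C)$.

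The main obstacle I anticipate is the step that forces $c_\gamma'$ to avoid a fixed neighbourhood of $x$. Unlike the normed-space setting, where one works with a single gauge and quantitative estimates are transparent, here I must juggle the balanced neighbourhood $V$ and its rescalings and check the containments $c_\gamma - x\notin W$ and $c_\gamma'-x\notin W$ simultaneously for large $\gamma$; the subtlety is that $c_\gamma - x \notin V$ does not by itself prevent $c_\gamma - x \in W$ when $W \subsetneq V$, so the argument really must use the identity $c_\gamma' - x = 2(m_\gamma-x)-(c_\gamma-x)$ together with convexity of $V$ to pin down where each point sits. I would be careful to state the subnet passage cleanly and to choose the rescaling factor so that all the absorptions line up.
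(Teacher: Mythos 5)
Your proposal is correct, but it follows a genuinely different route from the paper's. You prove the contrapositive and work directly with the convex-hull definition of dentability: from the midpoint identity $c_\gamma'-x=2(m_\gamma-x)-(c_\gamma-x)$, balancedness, and the inclusion $\tfrac12 V+\tfrac12 V\subset V$ (convexity), you force \emph{both} nets to eventually avoid the fixed neighbourhood $x+\tfrac12 V$, so the midpoints exhibit $x\in\overline{\mbox{conv}}\bigl(C\setminus(x+\tfrac12 V)\bigr)$ and $x\notin\mbox{DP}(C)$. The paper argues in the opposite direction: it assumes $x\in\mbox{DP}(C)$, invokes the slice characterization of denting points (which rests on Hahn--Banach separation of $x$ from $\overline{\mbox{conv}}(C\setminus U)$) to place a slice $\{x^*\leq\lambda\}\cap C$ inside a closed balanced $W$ with $W-W\subset U$, and then uses the \emph{linearity of the functional} to conclude that $c_\gamma'/2$ lies \emph{inside} the slice (hence close to the point), so that $c_\gamma/2=\frac{c_\gamma+c_\gamma'}{2}-c_\gamma'/2\in W-W\subset U$, a contradiction. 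So where the paper pushes $c_\gamma'$ close to $x$ via a functional, you push $c_\gamma'$ far from $x$ via neighbourhood arithmetic; your argument is the more elementary one, needing no functionals or separation theorem at all, while the paper's stays within the slice machinery it reuses throughout. One small correction to your closing discussion: the worry that ``$c_\gamma-x\notin V$ does not prevent $c_\gamma-x\in W$ when $W\subsetneq V$'' is backwards --- since $W=\tfrac12 V\subset V$ by balancedness, avoidance of $V$ \emph{does} imply avoidance of $W$; the only place balancedness and convexity are genuinely needed is the containment $\tfrac12 V-\tfrac12 V\subset V$ in the step that handles $c_\gamma'$.
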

\begin{proof}
Assume $x=\ceroe \in \mbox{DP}(C)$. Let us fix two nets $(c_{\gamma})_{\gamma\in \Gamma}$, $(c_{\gamma}')_{\gamma\in \Gamma}$ in $C$ such that $\lim_{\gamma} \frac{c_{\gamma}+c_{\gamma}'}{2}=\ceroe$, we will check that $\lim_{\gamma}c_{\gamma}=\ceroe$. Assume the contrary. Hence there exists $U\in \tau(\ceroe) $ and  a subnet, $(c_{\gamma_{\beta}})_{\beta\in \Gamma'}$, of $(c_{\gamma})_{\gamma\in \Gamma}$ such that $\frac{c_{\gamma_{\beta}}}{2} \not \in U$, $\forall \beta\in \Gamma'$. Now, fix $W\in\tau(\ceroe)$ closed and balanced such that $W-W\subset U$. By assumption, there exists $x^* \in E^*$ and $\lambda \in \R$ such that $\ceroe \in \{x^*\leq \lambda \}\cap C \subset W$. It is not restrictive to assume that $\frac{c_{\gamma_{\beta}}+c_{\gamma_{\beta}}'}{2} \in \{x^*\leq \lambda\}\cap C$, $\forall \beta\in \Gamma'$. Now, since $\frac{c_{\gamma_{\beta}}}{2} \not \in  \{x^*\leq \lambda\}\cap C$ and $\frac{c_{\gamma_{\beta}}+c_{\gamma_{\beta}}'}{4} \in  \{x^*\leq \lambda\}\cap C$,  $\forall \beta\in \Gamma'$, by convexity, we have $\frac{c_{\gamma_{\beta}}'}{2} \in \{x^*\leq \lambda\}\cap C \subset W$, $\forall \beta\in \Gamma'$. As a consequence, we have $\frac{c_{\gamma_{\beta}}}{2}=\frac{c_{\gamma_{\beta}}+c_{\gamma_{\beta}}'}{2}-\frac{c_{\gamma_{\beta}}'}{2} \in \{x^*\leq \lambda\}\cap C-W\subset W-W \subset U$, a contradiction.

In case  $x\not =\ceroe$, we have $\ceroe \in \mbox{DP}(C-x)$. By the former paragraph $\ceroe \in \mbox{s-ext}(C-x)$, and clearly $x \in \mbox{s-ext}(C)$.
\end{proof}
Given a l.c.s. $E$, we consider its dual $E^*$ topologized under the strong topology $\beta(E^*,E)$, i.e., the topology on $E^*$ of uniform convergence on bounded sets in $E$. We denote by $E^{**}$ the dual of  $(E^*, \beta(E^*,E))$, which is called the second dual. On $E^{**}$ we will consider two topologies, the weak-star topology, $\sigma(E^{**},E^*)$, and the strong topology, $\beta(E^{**},E^*)$. It is worth pointing out that $\sigma(E^{**},E^*)$ is compatible with the dual par $<E^*,E^{**}>$ whereas $\beta(E^{**},E^*)$ is not (in general). 

Let us now consider the canonical mapping \(J_{E}:E \rightarrow E^{**}\), defined by $J_{E}(x)(x^*):=x^*(x)$, $\forall x^* \in E^*$, which is linear and one-to-one. From now, we identify each $J_{E}(x)\in E^{**}$ with its corresponding $x \in E$. In this way, given a set $A\subset E$ we will denote by $\widetilde{A}$ the closure of $J_{E}(A)$ in $E^{**}$  under  $\sigma(E^{**},E^*)$ (i.e., $\overline{J_{E}(A)}^{\sigma(E^{**},E^*)}$), and by $\widehat{A}$ the closure of $J_{E}(A)$ in $E^{**}$  under $\beta(E^{**},E^*)$ (i.e., $\overline{J_{E}(A)}^{\beta(E^{**},E^*)}$); clearly $\widehat{A}\subset \widetilde{A}$ and $ \widetilde{A}\cap E=\overline{A}^{\sigma(E,E^*)}$. Since in large expressions the former notation may be confusing, sometimes we may write $\overline{A}^{\sigma(E^{**},E^*)}$ instead of $\overline{J_{E}(A)}^{\sigma(E^{**},E^*)}$, and  $\overline{A}^{\beta(E^{**},E^*)}$ instead of $\overline{J_{E}(A)}^{\beta(E^{**},E^*)}$. It is worth pointing out that $J_{E}$ is $\tau$-$\sigma(E^{**},E^*)$ continuous and a $\sigma(E,E^*)$-$\sigma(E^{**},E^*)$ homeomorphism from $E$ onto $J_{E}(E)$ (details in \cite[Theorem~5.10]{Osborne}). 

\begin{defin}\label{Defi_extreme_point}
Let $E$ be a linear space and $C\subset E$ a convex subset. We say that $x \in C$ is an extreme point of $C$, written $x\in$ ext$(C)$, if the equality $x=\frac{c_1+c_2}{2}\in C$  implies $x=c_1=c_2$, for any $c_1$, $c_2\in C$. 
\end{defin}
It is clear that $\sigma(E,E^*)\mbox{-s-ext}(C)\subset $ ext$(C)$. Such an inclusion remains true under the $\sigma(E^{**},E^*)$-closure of $C$, as the following result states. Let us remind that given a subset $A \subset E$, we denote by conv$(A)$ to the convex hull of $A$.
\begin{pro}\label{Prop_weakly_strongly_extreme_implies_Convex_tilde_extreme}
Let $E$ be a l.c.s. and $C\subset E$ a convex subset. Then $\sigma(E,E^*)\mbox{-s-ext}(C)\subset \mbox{ext}(\widetilde{C})$.
\end{pro}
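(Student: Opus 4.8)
The plan is to unwind the definition of $\mbox{ext}(\widetilde{C})$ and reduce the claim to a single application of the $\sigma(E,E^*)$-strongly extreme hypothesis, transported from $E$ to $E^{**}$ through the canonical embedding $J_{E}$. Fix $x\in \sigma(E,E^*)\mbox{-s-ext}(C)$; since $x\in C$ we have $J_{E}(x)\in J_{E}(C)\subset\widetilde{C}$, so it only remains to show that $J_{E}(x)$ is extreme in $\widetilde{C}$. To that end I would take arbitrary $z_1,z_2\in\widetilde{C}$ with $J_{E}(x)=\frac{z_1+z_2}{2}$ and aim to prove $z_1=z_2=J_{E}(x)$.

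First I would realise $z_1$ and $z_2$ as limits of points of $C$. Because $\widetilde{C}=\overline{J_{E}(C)}^{\sigma(E^{**},E^*)}$, there are nets $(c_\alpha)_{\alpha\in A}$ and $(d_\delta)_{\delta\in D}$ in $C$ with $J_{E}(c_\alpha)\to z_1$ and $J_{E}(d_\delta)\to z_2$ under $\sigma(E^{**},E^*)$. Since these nets live on different directed sets, I would pass to the product directed set $\Gamma=A\times D$ with the coordinatewise order and set $c_{(\alpha,\delta)}:=c_\alpha$ and $d_{(\alpha,\delta)}:=d_\delta$; a routine check shows that the reindexed nets still satisfy $J_{E}(c_{\gamma})\to z_1$ and $J_{E}(d_{\gamma})\to z_2$ under $\sigma(E^{**},E^*)$, while now sharing the single index set $\Gamma$.

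Next, by linearity of the weak-star topology, $J_{E}\!\left(\frac{c_\gamma+d_\gamma}{2}\right)=\frac{J_{E}(c_\gamma)+J_{E}(d_\gamma)}{2}\to\frac{z_1+z_2}{2}=J_{E}(x)$ under $\sigma(E^{**},E^*)$. Here is the pivotal point: the whole net $\bigl(\frac{c_\gamma+d_\gamma}{2}\bigr)_{\gamma}$ lies in $C\subset E$ and its limit $J_{E}(x)$ lies in $J_{E}(E)$, so I would invoke that $J_{E}$ is a $\sigma(E,E^*)$-$\sigma(E^{**},E^*)$ homeomorphism onto its image (as recorded in the excerpt) to pull the convergence back to $E$, obtaining $\frac{c_\gamma+d_\gamma}{2}\to x$ under $\sigma(E,E^*)$. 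This is exactly the midpoint hypothesis required to apply that $x$ is a $\sigma(E,E^*)$-strongly extreme point of $C$, whence $c_\gamma\to x$ under $\sigma(E,E^*)$, equivalently $J_{E}(c_\gamma)\to J_{E}(x)$ under $\sigma(E^{**},E^*)$.

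Finally, since $\sigma(E^{**},E^*)$ is Hausdorff, uniqueness of limits forces $z_1=J_{E}(x)$; and then $d_\gamma=2\cdot\frac{c_\gamma+d_\gamma}{2}-c_\gamma\to 2x-x=x$ under $\sigma(E,E^*)$ by linearity, so the same uniqueness argument gives $z_2=J_{E}(x)$. Hence $z_1=z_2=J_{E}(x)$ and $J_{E}(x)\in\mbox{ext}(\widetilde{C})$. I expect the main obstacle to be the topological bookkeeping of the middle step rather than any deep idea: one must merge two nets indexed by unrelated directed sets into a single net, and --- crucially --- transfer the weak-star convergence of the midpoints in $E^{**}$ back to weak convergence in $E$ via the homeomorphism property of $J_{E}$ (using that the limit already belongs to $J_{E}(E)$), so that the $\sigma(E,E^*)$-strongly extreme hypothesis on $x$ becomes directly applicable.
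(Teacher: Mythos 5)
Your argument is clean up to the pivotal step, but that step contains a genuine gap: you invoke the $\sigma(E,E^*)$-strongly extreme hypothesis for nets whose midpoints converge to $x$ only in the \emph{weak} topology $\sigma(E,E^*)$, whereas the definition requires the midpoints to converge in the \emph{original} topology $\tau$. In the paper's definition only the conclusion ``$\lim_{\gamma}c_{\gamma}=x$'' is moved to $\sigma(E,E^*)$; the hypothesis $\lim_{\gamma}\frac{c_{\gamma}+c_{\gamma}'}{2}=x$ is an unqualified limit, hence a limit in $\tau$ (this is the paper's stated convention of omitting $\tau$, and it is confirmed by the paper's own proof of this proposition, which translates the hypothesis into the existence of $\tau$-neighbourhoods $U(x^*,\epsilon)\in\tau(x)$ controlling pairs of points of $C$ whose midpoint lies in $U(x^*,\epsilon)$; it is also what makes Theorem~\ref{Teo_dentability_ELC_widetildeC} follow from Propositions~\ref{Prop_denting_implies_strongly_extreme_convex} and \ref{Prop_weakly_strongly_extreme_implies_Convex_tilde_extreme}, since $\mbox{s-ext}(C)\subset\sigma(E,E^*)\mbox{-s-ext}(C)$ is automatic only under this reading). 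Pulling weak-star convergence in $E^{**}$ back through $J_{E}$ yields only $\sigma(E,E^*)$-convergence of the midpoints in $E$, never $\tau$-convergence, and $\tau$ is in general strictly finer than $\sigma(E,E^*)$; so the hypothesis you need is simply not available at that point. What you actually prove is the inclusion for the smaller set of points satisfying a ``weak midpoints $\Rightarrow$ weak convergence'' property, i.e.\ a weaker statement than the proposition.

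The missing idea, which is the entire substance of the paper's proof, is how to bridge weak convergence of midpoints to something $\tau$-small, and the bridge is convexity plus Mazur's theorem. The paper argues by contradiction: if $x=\frac{x^{**}_1+x^{**}_2}{2}$ with $x^{**}_1\neq x^{**}_2$ in $\widetilde{C}$, pick $x^*_0\in E^*$ and $\alpha<\beta$ with $x^{**}_1(x^*_0)<\alpha<\beta<x^{**}_2(x^*_0)$, and nets $(z^1_{\gamma}),(z^2_{\gamma})\subset C$ with $x^*_0(z^1_{\gamma})<\alpha<\beta<x^*_0(z^2_{\gamma})$. The midpoints $\frac{z^1_{\gamma}+z^2_{\gamma}}{2}$ converge to $x$ only weakly, but since all locally convex topologies consistent with the pair $<E,E^*>$ have the same closed convex sets, $x$ lies in the $\tau$-closure of the \emph{convex hull} of these midpoints. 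A finite convex combination of midpoints is again a midpoint, namely of $u_1:=\sum_j\lambda_j z^1_{\gamma_j}\in C$ and $u_2:=\sum_j\lambda_j z^2_{\gamma_j}\in C$ (here convexity of $C$ is used), and such a combination can be placed inside the $\tau$-neighbourhood $U(x^*_0,\epsilon_0)$ with $\epsilon_0=\frac{\beta-\alpha}{2}$; the hypothesis then forces $|x^*_0(u_1)-x^*_0(u_2)|\leq 2\epsilon_0$, contradicting $x^*_0(u_1)<\alpha<\beta<x^*_0(u_2)$. Your direct strategy could be repaired along these lines, but only by inserting this Mazur/convex-combination step; as written, the proof does not establish the proposition.
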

\begin{proof}
Since $x \in \sigma(E,E^*)\mbox{-s-ext}(C)$, given any $x^* \in E^*$ and every $\epsilon>0$ there exists $U(x^*,\epsilon)\in \tau(x)$ such that if $\{y_1,\,y_2\}\subset C$ and $\frac{y_1+y_2}{2}\in U(x^*,\epsilon)$, then  $\max\{|x^*(y_1-x)|,|x^*(y_2-x)|\}<\epsilon$. Assume that $x \not \in \mbox{ext}(\widetilde{C})$. Then there exists $x^{**}_1,\,x^{**}_2 \in \widetilde{C}$, $x^{**}_1\not = x^{**}_2$, such that $x= \frac{x^{**}_1+x^{**}_2}{2}$. Let us fix $x^*_0 \in E^*$, $\alpha$, $\beta \in \R$ such that $x^{**}_1(x^*_0)<\alpha<\beta<x^{**}_2(x^*_0)$. We consider a net $(z^1_{\gamma})_{\gamma \in \Gamma} \subset C$ which $\sigma(E^{**},E^*)$-converges to $x^{**}_1$ and a net $(z^2_{\gamma})_{\gamma \in \Gamma} \subset C$ which $\sigma(E^{**},E^*)$-converges to $x^{**}_2$. It is not restrictive to assume that $x^*_0(z^1_{\gamma})<\alpha<\beta<x^*_0(z^2_{\gamma})$, $\forall \gamma \in \Gamma$. Since all locally convex topologies consistent with a given dual pair have the same collection of closed convex sets, it follows that $x \in \overline{\{\frac{z^1_{\gamma}+z^2_{\gamma}}{2}\colon \gamma \in \Gamma\}}^{\sigma(E,E^*)}\subset \overline{conv}^{\sigma(E,E^*)}\{\frac{z^1_{\gamma}+z^2_{\gamma}}{2}\colon \gamma \in \Gamma\}=\overline{conv}\{\frac{z^1_{\gamma}+z^2_{\gamma}}{2}\colon \gamma \in \Gamma\}$. Now define $\epsilon_0:=\frac{\beta-\alpha}{2}>0$. Consider $n \geq 1$ and $\{\lambda_1,\ldots,\lambda_n\}\subset [0,1]$ such that $\sum_{j=1}^{n}\lambda_j=1$ and $\sum_{j=1}^{n}\frac{\lambda_j}{2}(z^1_{\gamma_j}+z^2_{\gamma_j})\in U(x^*_0,\epsilon_0)$, where $U(x^*_0,\epsilon_0)$ is given by the property of the beginning of the proof. We define $u_1:=\sum_{j=1}^{n}\lambda_jz^1_{\gamma_j}\in C$ and $u_2:=\sum_{j=1}^{n}\lambda_jz^2_{\gamma_j}\in C$. Since $\frac{u_1+u_2}{2}\in U(x^*_0,\epsilon_0)$, it follows that $\max\{|x^*(u_1-x)|,|x^*(u_2-x)|\}<\epsilon_0$, which implies $|x^*(u_1)-x^*(u_2)|\leq 2\epsilon_0$. On the other hand $x^*(u_1)=\sum_{j=1}^{n}\lambda_jx^*(z^1_{\gamma_j})<\alpha<\beta<\sum_{j=1}^{n}\lambda_jx^*(z^2_{\gamma_j})=x^*(u_2)$, which yields to $|x^*(u_2)-x^*(u_1)|=x^*(u_2)-x^*(u_1)>\beta-\alpha=2\epsilon_0$, a contradiction.
\end{proof}
Now we introduce the notion of point of continuity, which is weaker than the notion of denting point.
\begin{defin}\label{Defi_PointContinuity_y_boundedPC}
Let $E$ be a l.c.s. and $C\subset E$ a convex subset. 
\item[(i)] We say that $x$ is a point of continuity of $C$, written $x \in \mbox{PC}(C)$, if for every $U \in \tau(x)$, there exists $W \in \sigma(E,E^*)(x)$ such that $x \in W\cap C \subset U$.
\item[(ii)] We say that $x$ is a bounded point of continuity of $C$, written $x \in \mbox{bPC}(C)$, if $x \in \mbox{PC}(C)$ and there exists $W\in \sigma(E,E^*)(x)$ such that $W\cap C$ is bounded. 
\end{defin}
If either $C$ is bounded or $E$ is a normed space, then every point of continuity is a bounded point of continuity. On the other hand, in the following results we will show that extreme points connect denting points with points of continuity in a similar way, although not exactly the same, as it happens in normed spaces. The first one is a consequence of Propositions \ref{Prop_denting_implies_strongly_extreme_convex} and \ref{Prop_weakly_strongly_extreme_implies_Convex_tilde_extreme}.
\begin{theo}\label{Teo_dentability_ELC_widetildeC}
Let $E$ be a l.c.s. and $C \subset E$ a convex subset. Then $\mbox{DP}(C)\subset \mbox{PC}(C)\cap \mbox{ext}(\widetilde{C})$.
\end{theo}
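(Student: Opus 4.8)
The plan is to establish the inclusion $\mbox{DP}(C)\subset \mbox{PC}(C)\cap \mbox{ext}(\widetilde{C})$ by splitting it into the two asserted containments and handling each with the tools already in hand. The containment into $\mbox{ext}(\widetilde{C})$ is essentially immediate from the preceding two propositions, while the containment into $\mbox{PC}(C)$ should follow directly from the definitions, since a denting point separates a point from the closed convex hull of its complement by a slice, and slices are weak neighbourhoods intersected with $C$.

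First I would prove $\mbox{DP}(C)\subset \mbox{PC}(C)$ directly. Fix $x\in\mbox{DP}(C)$ and an arbitrary $U\in\tau(x)$. By the slice reformulation recorded right after Definition~\ref{Defi_DentingPoint}, there exist $x^*\in E^*$ and $\lambda\in\R$ with $x\in\{x^*<\lambda\}\cap C\subset U$. The half-space $\{x^*<\lambda\}$ is open in the weak topology $\sigma(E,E^*)$ and contains $x$, so setting $W:=\{x^*<\lambda\}$ we have $W\in\sigma(E,E^*)(x)$ and $x\in W\cap C\subset U$. Since $U$ was arbitrary, this is exactly the defining condition for $x\in\mbox{PC}(C)$.

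For the containment $\mbox{DP}(C)\subset\mbox{ext}(\widetilde{C})$, I would chain the two propositions already established. By Proposition~\ref{Prop_denting_implies_strongly_extreme_convex} we have $\mbox{DP}(C)\subset\mbox{s-ext}(C)$. The only subtlety is to pass from ordinary strong extremality to $\sigma(E,E^*)$-strong extremality, so that Proposition~\ref{Prop_weakly_strongly_extreme_implies_Convex_tilde_extreme} applies: if a net converges in the original topology $\tau$ then it converges in the weaker topology $\sigma(E,E^*)$, hence $\mbox{s-ext}(C)\subset\sigma(E,E^*)\mbox{-s-ext}(C)$. Composing with Proposition~\ref{Prop_weakly_strongly_extreme_implies_Convex_tilde_extreme}, which gives $\sigma(E,E^*)\mbox{-s-ext}(C)\subset\mbox{ext}(\widetilde{C})$, yields $\mbox{DP}(C)\subset\mbox{ext}(\widetilde{C})$.

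Intersecting the two containments gives the claim. The main obstacle I anticipate is not in either individual step but in the bookkeeping of that one topology comparison in the extremality argument: one must confirm that the implication ``$\tau$-convergence forces the limit'' is genuinely stronger than ``$\sigma(E,E^*)$-convergence forces the limit,'' i.e.\ that $\mbox{s-ext}(C)\subset\sigma(E,E^*)\mbox{-s-ext}(C)$ and not the reverse. Since $\sigma(E,E^*)$ is coarser than $\tau$, the condition defining $\sigma(E,E^*)\mbox{-s-ext}(C)$ is weaker (easier to satisfy), so the inclusion goes in the direction we need. Everything else is a direct unwinding of definitions and an application of the quoted results.
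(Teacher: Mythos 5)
Your proposal is correct and follows essentially the same route as the paper: the theorem is stated there precisely as a consequence of Propositions~\ref{Prop_denting_implies_strongly_extreme_convex} and \ref{Prop_weakly_strongly_extreme_implies_Convex_tilde_extreme}, with the inclusion $\mbox{DP}(C)\subset\mbox{PC}(C)$ and the passage $\mbox{s-ext}(C)\subset\sigma(E,E^*)\mbox{-s-ext}(C)$ left implicit exactly as you fill them in. Your explicit verification of these two small steps (slices are weak neighbourhoods intersected with $C$; the $\sigma(E,E^*)$ version of strong extremality has the same hypothesis but a weaker conclusion) is accurate and matches the intended argument.
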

In order to establish the reverse inclusion of the former result we will need some preliminary results and new terminology. Given $x^{*} \in E^*$ and $\lambda \in \R$, we will denote by $\{x^{*}\leq \lambda\}^{**}$ the set in $E^{**}$ defined by $\{x^{**} \in E^{**} \colon x^{**}(x^{*})<\lambda\}$. Let $A\subset E^{**}$, a  $\sigma(E^{**},E^*)$-open slice of $A$ is a set of the form $\{x^{*}<\lambda\}^{**}\cap A$ for some $x^{*} \in E^*$ and $\lambda \in \R$.
\begin{pro}[Choquet's Lemma]\label{Lemma_Choquet_LCS}
Let $E$ be a l.c.s., $C \subset E^{**}$ a subset $\sigma(E^{**},E^*)$-compact and convex, and $x^{**} \in \mbox{ext}(C)$. Then the family of $\sigma(E^{**},E^*)$-open slices of $C$ containing $x^{**}$ forms a neighbourhood base of $x^{**}$ for the topology $\sigma(E^{**},E^*)$  relative  to $C$.
\end{pro}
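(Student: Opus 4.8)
The plan is to reproduce, in the present setting, the classical proof of Choquet's Lemma for a compact convex set in a locally convex space, the ambient space now being the Hausdorff l.c.s. $(E^{**},\sigma(E^{**},E^*))$. The crucial structural fact I would use is that, since $\sigma(E^{**},E^*)$ is compatible with the dual pair $\langle E^*,E^{**}\rangle$, the topological dual of $(E^{**},\sigma(E^{**},E^*))$ is exactly $E^*$; hence every functional produced by a separation argument is of the form $y^{**}\mapsto y^{**}(x^*)$ for some $x^*\in E^*$, which is precisely what is needed to obtain a slice in the sense defined above. I would fix an arbitrary relatively $\sigma(E^{**},E^*)$-open neighbourhood $N$ of $x^{**}$ in $C$ and aim to produce an open slice $S$ with $x^{**}\in S\subset N$. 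First I would write $N=O'\cap C$ for some $\sigma(E^{**},E^*)$-open set $O'$ and then shrink $O'$ to a convex open neighbourhood $O$ of $x^{**}$ with $O\cap C\subset N$, which is legitimate because convex open sets form a neighbourhood base in a l.c.s.

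The central object is $D:=\overline{\text{conv}}^{\sigma(E^{**},E^*)}(C\setminus O)$. The key observation, which makes the argument go through without any completeness assumption on $E^{**}$, is that $C\setminus O$ is a $\sigma(E^{**},E^*)$-closed subset of the compact set $C$ and hence compact, while $D\subset C$ because $C$ is convex and $\sigma(E^{**},E^*)$-closed; thus $D$ is a closed subset of the compact set $C$ and is therefore itself $\sigma(E^{**},E^*)$-compact. Next I would show $x^{**}\notin D$. Assuming the contrary, since $D\subset C$ and $x^{**}\in\mbox{ext}(C)$, the point $x^{**}$ is also an extreme point of $D$; applying Milman's theorem (the partial converse of Krein--Milman, valid here because both $C\setminus O$ and its closed convex hull $D$ are compact) I get that every extreme point of $D$ lies in $C\setminus O$, so $x^{**}\in C\setminus O$, contradicting $x^{**}\in O$. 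Hence $x^{**}\notin D$.

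Finally I would separate: $\{x^{**}\}$ is compact and convex, $D$ is $\sigma(E^{**},E^*)$-closed and convex, and the two sets are disjoint, so the Hahn--Banach strong separation theorem in $(E^{**},\sigma(E^{**},E^*))$ yields $x^*\in E^*$ and $\lambda\in\R$ with $x^{**}(x^*)<\lambda<y^{**}(x^*)$ for every $y^{**}\in D\supset C\setminus O$. Then the open slice $S:=\{x^*<\lambda\}^{**}\cap C$ contains $x^{**}$, and any $y^{**}\in C$ with $y^{**}(x^*)<\lambda$ cannot belong to $C\setminus O$, whence $y^{**}\in O\cap C\subset N$; thus $x^{**}\in S\subset N$, which proves that the open slices containing $x^{**}$ form a neighbourhood base for the relative topology.

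I expect the main obstacle to be establishing the compactness of $D$ without postulating that $(E^{**},\sigma(E^{**},E^*))$ is quasi-complete, since in a general l.c.s.\ the closed convex hull of a compact set need not be compact. This is resolved exactly by exploiting that $D$ is trapped inside the compact convex set $C$; once this is in place, the appeal to Milman's theorem and to strong separation is routine.
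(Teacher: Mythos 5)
Your proof is correct, but it takes a genuinely different route from the paper's. The paper works with a basic weak-star neighbourhood $W=\cap_{i=1}^n\{x_i^{*}<\alpha_i\}^{**}$ and covers its complement in $C$ by the finite union $D=\cup_{i=1}^nK_i$ of the compact convex sets $K_i=C\cap\{x_i^{*}\geq\alpha_i\}^{**}$; it then shows by hand both that $x^{**}\notin\mbox{conv}(D)$ (a direct finite convex-combination argument from extremality) and that $\mbox{conv}(D)$ is $\sigma(E^{**},E^*)$-compact (parametrizing $\mbox{conv}(D)$ by the simplex in $\R^n$ and extracting subnets), and only then separates. You instead take an arbitrary convex open neighbourhood $O$, set $D=\overline{\mbox{conv}}^{\sigma(E^{**},E^*)}(C\setminus O)$, get compactness of $D$ for free from the inclusion $D\subset C$ --- the same trap-inside-$C$ idea both proofs need, since closed convex hulls of compact sets need not be compact absent quasi-completeness --- and replace the paper's two hand-made steps by a single appeal to Milman's theorem to conclude $x^{**}\notin D$. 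The trade-off: the paper's argument is elementary and self-contained, using nothing beyond Hahn--Banach separation; yours is shorter and works directly with arbitrary convex neighbourhoods, at the cost of invoking the converse Krein--Milman theorem, which is legitimate here because $(E^{**},\sigma(E^{**},E^*))$ is a Hausdorff l.c.s. and both $C\setminus O$ and $D$ are compact. Two small points: you should dispose of the trivial case $C\setminus O=\emptyset$ separately (the separation step is vacuous there, but any slice $\{x^{*}<\lambda\}^{**}\cap C$ with $\lambda>x^{**}(x^{*})$ does the job), and your separation inequality, correctly stated as $x^{**}(x^{*})<\lambda<\inf\{y^{**}(x^{*})\colon y^{**}\in D\}$, incidentally fixes a slip in the paper's own proof, which writes $\sup$ where $\inf$ is required.
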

\begin{proof}
Let $W=\cap_{i=1}^n\{x^{*}_i < \alpha_i\}^{**}$ be a $\sigma(E^{**},E^*)$ neighbourhood of $x^{**}$, for some $n \geq 1$, $x^{*}_i \in E^*$, and $\alpha_i \in \R$. For every $i \in \{1,\ldots,n\}$ we define $K_i:=C\cap \{x^*_i \geq \alpha_i\}^{**}\subset C$ which is $\sigma(E^{**},E^*)$-compact and convex. Denote $D:=\cup_{i=1}^nK_i$. Clearly $x^{**} \not \in D$. Since $x^{**}\in \mbox{ext}(C)$ and $D\subset C$, we have $x^{**} \not \in \mbox{conv}(D)$. Indeed, assume that $x^{**}=\sum_{j=1}^{m}\lambda_j d^{**}_j$ for some $m>2$, $d^{**}_j\in D\subset C$, and $\lambda_j \geq 0$ for every $j=1,\ldots,m$ such that $\sum_{j=1}^{m}\lambda_j=1$. We define $y^{**}:=\sum_{j=1}^{m-1}\frac{\lambda_j}{\sum_{j=1}^{m-1}\lambda_j} d^{**}_j\in \mbox{conv}(D)\subset C$. We can write $x^{**}=\sum_{j=1}^{m-1}\lambda_jy^{**}+\lambda_md^{**}_m$. Since $x^{**}\in \mbox{ext}(C)$, it follows that $x^{**}=y^{**}=d^{**}_m\in D$, a contradiction.  As $\mbox{conv}(D)=\mbox{conv}(\cup_{i=1}^nK_i)$ is the convex hull of a finite number of $\sigma(E^{**},E^*)$-compact convex subsets we have that $\mbox{conv}(D)$ is also $\sigma(E^{**},E^*)$-compact. As a consequence, $x^{**} \not \in \overline{\mbox{conv}}^{\sigma(E^{**},E^*)}(D)$. Let us check that $\mbox{conv}(D)$ is $\sigma(E^{**},E^*)$-compact. An argument similar to that used in the proof of $x^{**} \not \in \mbox{conv}(D)$ yields $\mbox{conv}(D)=\{\sum_{i=1}^{n}\lambda_i k_i\colon \lambda_i \geq 0,\,k_i \in K_i, \mbox{ for } i=1,\ldots,n,\,\sum_{i=1}^{n}\lambda_i=1\}$. Let us note that the set $K:=\{(\lambda_1,\ldots,\lambda_n)\in \R^n\colon \lambda_i \geq 0\mbox{ for } i=1,\ldots,n \mbox{ and }\sum_{i=1}^{n}\lambda_i=1\}$ is a compact subset of $\R^n$. Now, fix a net $(x_{\gamma})_{\gamma \in \Gamma}\subset \mbox{conv}(D)$, we will find a convergent subnet. For every $\gamma \in \Gamma$, we write $x_{\gamma}=\sum_{i=1}^{n}\lambda^{\gamma}_ik^{\gamma}_i$, for $(\lambda^{\gamma}_1,\ldots,\lambda^{\gamma}_n)\in K$ and $k^{\gamma}_i\in K_i$, $\forall i\in \{1,\ldots,n\}$. Then $((\lambda^{\gamma}_1,\ldots,\lambda^{\gamma}_n))_{\gamma \in \Gamma}\subset K$ is a net which has subnet $((\lambda^{\gamma_{\beta}}_1,\ldots,\lambda^{\gamma_{\beta}}_n))_{\beta \in \Gamma'}\subset K$ converging to some $(\lambda_1,\ldots,\lambda_n)\in K$. Now, for every $\beta \in \Gamma'$ we write $x_{\gamma_{\beta}}=\sum_{i=1}^{n}\lambda^{\gamma_{\beta}}_ik^{\gamma_{\beta}}_i$. By the compactness of each $K_i$, it follows that each net $(k^{\gamma_{\beta}}_i)_{\beta \in \Gamma'}\subset K_i$ has a convergent subnet. For simplicity of the notation, we will assume that for every $i \in \{1,\ldots,n\}$, there exists $k_i\in K_i$ such that $\lim_{\beta}k^{\gamma_{\beta}}_i=k_i\in K_i$. From the above it follows that the net  $(x_{\gamma_{\beta}})_{\beta \in \Gamma'}$ converges to $\sum_{i=1}^n\lambda_ik_i \in \mbox{conv}(D)$ proving the compactness of $\mbox{conv}(D)$. Now, since $x^{**} \not \in \overline{\mbox{conv}}^{\sigma(E^{**},E^*)}(D)$, we apply \cite[Theorem 3.32]{Montensinos2ndbook} to find $x^{*} \in E^*$ and $\lambda \in \R$ such that $x^{**}(x^{*})<\lambda<\sup\{y^{**}(x^{*})\colon y^{**} \in 
\overline{\mbox{conv}}^{\sigma(E^{**},E^*)}(D)\}$. Then $x^{**} \in C\cap \{x^{*} < \lambda\}^{**}\subset C \cap W$.
\end{proof}
Let $A\subset E$ be a subset and $r\in \R$, we define $rA:=\{ra\colon a \in A\}$. Let us fix a dual pair $<E_1,E_2>$ and $A\subset E_1$ a subset, we denote the (absolute) polar of $A$  by $A^{\circ}:=\{f\in E_2\colon |f(x)|\leq 1,\, \forall x \in A\}$. Let us note that the construction of polars is fundamental in describing the collection of all consistent topologies for a dual pair.
\begin{pro}\label{Prop_pseudoslices_base_LCS_convex_set}
Let $E$ be a l.c.s. and $C\subset E$ a convex subset. If $x\in \mbox{bPC}(C)\cap \mbox{ext}(\widetilde{C})$, then there exists  $U \in \tau(\ceroe)$ such that for every $0 <r\leq 1$: 
\item[(i)] $C\cap (rU+x)$ is bounded.
\item[(ii)] The family of open slices of $C\cap (rU+x)$ containing $x$ forms a neighbourhood base of $x$ for the topology $\sigma(E,E^*)$ relative to $C\cap (rU+x)$.
\end{pro}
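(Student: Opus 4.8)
The plan is to extract a single neighbourhood $U$ from the boundedness clause of the hypothesis and then, for each fixed $r$, to lift $C\cap(rU+x)$ into the second dual, where Choquet's Lemma is available. Since $x\in\mbox{bPC}(C)$, there is some $W\in\sigma(E,E^*)(x)$ with $W\cap C$ bounded; as $\sigma(E,E^*)\subset\tau$, this $W$ is also a $\tau$-neighbourhood of $x$, so I may pick a balanced $U\in\tau(\ceroe)$ with $U+x\subset W$. For every $0<r\leq 1$ balancedness gives $rU\subset U$, whence $C\cap(rU+x)\subset C\cap(U+x)\subset C\cap W$ is bounded. This settles (i) for all $r$ simultaneously and fixes the neighbourhood $U$ demanded by the statement.

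Fix now $0<r\leq 1$ and set $A:=C\cap(rU+x)$, a bounded convex set with $x\in A$ (indeed $\ceroe\in rU$). The crucial step is to show that $\widetilde{A}$ is $\sigma(E^{**},E^*)$-compact and convex. Convexity is immediate, since $J_{E}$ is linear and the $\sigma(E^{**},E^*)$-closure of a convex set is convex. For compactness I would take a bounded absolutely convex $B\subset E$ with $A\subset B$; then $B^{\circ}\subset E^*$ is a $\beta(E^*,E)$-neighbourhood of the origin, so by the Banach--Alaoglu theorem its polar $B^{\circ\circ}\subset E^{**}$ is $\sigma(E^{**},E^*)$-compact. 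Since $|f(a)|\leq 1$ for $a\in A$ and $f\in B^{\circ}$, we have $J_{E}(A)\subset B^{\circ\circ}$, so $\widetilde{A}\subset B^{\circ\circ}$ is a $\sigma(E^{**},E^*)$-closed subset of a compact set, hence itself $\sigma(E^{**},E^*)$-compact.

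From $A\subset C$ we get $\widetilde{A}\subset\widetilde{C}$, and since $x\in\mbox{ext}(\widetilde{C})$ and $x\in\widetilde{A}$, the point $x$ is extreme in the smaller convex set $\widetilde{A}$. Proposition~\ref{Lemma_Choquet_LCS} then applies to the $\sigma(E^{**},E^*)$-compact convex set $\widetilde{A}$: the $\sigma(E^{**},E^*)$-open slices of $\widetilde{A}$ containing $x$ form a neighbourhood base of $x$ for $\sigma(E^{**},E^*)$ relative to $\widetilde{A}$. To descend to $A$, recall that $J_{E}$ is a $\sigma(E,E^*)$-$\sigma(E^{**},E^*)$ homeomorphism onto $J_{E}(E)$, and that $J_{E}(\{x^*<\lambda\}\cap A)=\{x^*<\lambda\}^{**}\cap J_{E}(A)$ for $x^*\in E^*$, $\lambda\in\R$. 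Intersecting each slice $\{x^*<\lambda\}^{**}\cap\widetilde{A}$ with $J_{E}(A)$ thus produces exactly the open slices of $A$, and, since the trace of a neighbourhood base on a subset is again a neighbourhood base, these open slices containing $x$ form a $\sigma(E,E^*)$-neighbourhood base of $x$ relative to $A$, which is (ii).

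I expect the main obstacle to be the second paragraph: deducing the $\sigma(E^{**},E^*)$-compactness of $\widetilde{A}$ from the mere boundedness of $A$. In the normed case this reduces to Banach--Alaoglu applied to a ball, but here it requires the correct interplay between boundedness in $E$, the strong topology $\beta(E^*,E)$ on $E^*$, and polars in the dual pair $\langle E^*,E^{**}\rangle$. Everything else is either routine or a direct appeal to Choquet's Lemma and the homeomorphism property of $J_{E}$; in particular, the argument uses only the boundedness clause of $\mbox{bPC}(C)$ together with $x\in\mbox{ext}(\widetilde{C})$.
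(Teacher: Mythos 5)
Your proposal is correct and takes essentially the same route as the paper's own proof: boundedness of $C\cap(rU+x)$ extracted from the $\mbox{bPC}$ clause, $\sigma(E^{**},E^*)$-compactness of the weak-star closure obtained via polars and Alaoglu's theorem, extremality of $x$ in that compact set inherited from $\mbox{ext}(\widetilde{C})$, an application of Choquet's Lemma (Proposition~\ref{Lemma_Choquet_LCS}), and descent to $E$ by intersecting slices with the image of $J_E$. The remaining differences are cosmetic: you work directly at $x$ and for each $r$, and trace slices through the $\sigma(E,E^*)$-$\sigma(E^{**},E^*)$ homeomorphism, whereas the paper translates to $\ceroe$, reduces to $r=1$, and descends by computing $\overline{W\cap C \cap U}^{\sigma(E^{**},E^*)}\cap E= \overline{W\cap C \cap U}^{\sigma(E,E^*)}$.
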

\begin{proof}
First, we assume that $x=\ceroe \in \mbox{bPC}(C)\cap \mbox{ext}(\widetilde{C})$. It is sufficient to prove the case $r=1$. 

(i) As $\ceroe \in \mbox{bPC}(C)$, there exists $W\in \sigma(E,E^*)(\ceroe)$ such that $C \cap W$ is bounded. On the other hand, since the initial topology $\tau$ on $E$ is finer than $\sigma(E,E^*)$, there exists $U \in \tau(\ceroe)$ such that $C\cap U\subset C\cap W$. Hence $C\cap U$ is also bounded.

(ii) The proof will be divided into two parts. In the first one we will check that for $U \in \tau(\ceroe)$ from (i), the set $\overline{C \cap U}^{\sigma(E^{**},E^*)}$ is a $\sigma(E^{**},E^*)$ compact set in $E^{**}$.  Since $C\cap U$ is bounded, it follows that the corresponding (absolute) polar $(C \cap U)^{\circ} \in \beta(E^*,E)(\ceroe)$. Applying again polars (now under the duality $<E^*,E^{**}>$), we have $((C \cap U)^{\circ})^{\circ}\subset E^{**}$ which is a $\sigma(E^{**},E^*)$ compact set (by the Alaoglu's Compactness Theorem, \cite[Theorem 5.105]{ali-bor}). Since $C \cap U\subset ((C \cap U)^{\circ})^{\circ}$, we conclude that $\overline{C \cap U}^{\sigma(E^{**},E^*)}$ is a $\sigma(E^{**},E^*)$ compact set.
In the second part, we fix some $W\in \sigma(E,E^*)(\ceroe)$ such that $W=\cap_{i=1}^n\{x^{*}_i\leq \alpha_i\}$ for some $n \geq 1$, $x^{*}_i \in E^*$, $\alpha_i \in \R$. Since $\ceroe \in \mbox{ext}(\widetilde{C})$, we have $\ceroe \in \mbox{ext}(\overline{C \cap U}^{\sigma(E^{**},E^*)})$ for  $U \in \tau(\ceroe)$ from (i). It is not restrictive to assume that $U$ is closed. Now, by Proposition \ref{Lemma_Choquet_LCS} there exist $x^{*} \in E^*$ and $\lambda \in \R$ such that $\ceroe\in \{x^{*}\leq \lambda\}^{**}\cap \overline{C \cap U}^{\sigma(E^{**},E^*)} \subset \overline{W\cap C \cap U}^{\sigma(E^{**},E^*)}$. Then $\{x^{*} \leq \lambda\}\cap C\cap U\subset \overline{W\cap C \cap U}^{\sigma(E^{**},E^*)}\cap E= \overline{W\cap C \cap U}^{\sigma(E,E^*)}\subset W\cap \overline{C \cap U}^{\sigma(E,E^*)}$. As a consequence, $\{x^{*} \leq \lambda\}\cap C\cap U\subset W\cap C\cap U$.

Let us assume now that $x\in \mbox{bPC}(C)\cap \mbox{ext}(\widetilde{C})$, $x\not =\ceroe$. Then $\ceroe\in \mbox{bPC}(C-x)\cap \mbox{ext}(\widetilde{C}-x)$. Applying the case $x=\ceroe$ already proved, there exists $U \in \tau(\ceroe)$ such that for every $0 <r\leq 1$ the set $(C-x)\cap (rU)$ is bounded and the family of open slices of $(C-x)\cap (rU)$ containing $\ceroe$ forms a neighbourhood base of $\ceroe$ for the topology $\sigma(E,E^*)$ relative to $(C-x)\cap (rU)$. It is a simple matter to check that $C\cap (rU+x)$ is bounded and that the family of open slices of $C\cap (rU+x)$ containing $x$ forms a neighbourhood base of $x$ for the topology $\sigma(E,E^*)$ relative to $C\cap (rU+x)$.
\end{proof}

The next theorem extends from normed spaces to locally convex spaces Proposition 9.1 in \cite{Guirao2014} and Proposition 2 in \cite{GARCIACASTANO2018}. 

\begin{theo}\label{Teo_caract_extremal_C_tilde} 
Let $E$ be a l.c.s., $C \subset E$ a convex subset, and $x \in C$. Assume that there exists $U \in \tau(x)$ such that $C\cap U$ is bounded and convex. The following are equivalent.
\item[(i)] $x \in \sigma(E,E^*)\mbox{-s-ext}(C\cap U)$.
\item[(ii)] $x\in \mbox{ext}(\widetilde{C})$.
\item[(iii)] The family of open slices of $C\cap U$ containing $x$ forms a neighbourhood base of $x$ for the topology $\sigma(E,E^*)$ relative to $C\cap U$.
\end{theo}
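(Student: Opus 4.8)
The plan is to route all three conditions through the auxiliary local condition (ii$'$): $x\in\mbox{ext}(\widetilde{C\cap U})$, proving (i)$\Leftrightarrow$(ii$'$), (ii)$\Leftrightarrow$(ii$'$) and (iii)$\Leftrightarrow$(ii$'$). First I would reduce to $x=\ceroe$ by translating $C$ and $U$; since translation is an affine $\sigma(E^{**},E^*)$-homeomorphism it preserves all four properties, and it makes $C$ star-shaped at $\ceroe$. Exactly as in the first part of the proof of Proposition~\ref{Prop_pseudoslices_base_LCS_convex_set}, the boundedness of $C\cap U$ makes $\widetilde{C\cap U}=\overline{C\cap U}^{\sigma(E^{**},E^*)}$ a $\sigma(E^{**},E^*)$-compact convex subset of $E^{**}$; this compactness is what drives the whole argument.

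For (i)$\Leftrightarrow$(ii$'$) the forward implication is immediate from Proposition~\ref{Prop_weakly_strongly_extreme_implies_Convex_tilde_extreme} applied to the convex set $C\cap U$. For the converse I would argue by contradiction: if $x$ is not $\sigma(E,E^*)$-strongly extreme in $C\cap U$, there are nets $(c_\gamma)$, $(c_\gamma')$ in $C\cap U$ with $\frac{c_\gamma+c_\gamma'}{2}\to x$ but $c_\gamma\not\to x$ in $\sigma(E,E^*)$; passing to a subnet lying outside a fixed $\sigma(E,E^*)$-neighbourhood and using compactness of $\widetilde{C\cap U}$, extract $c_\gamma\to y_1\in\widetilde{C\cap U}$ in $\sigma(E^{**},E^*)$, whence $c_\gamma'=2\frac{c_\gamma+c_\gamma'}{2}-c_\gamma\to 2x-y_1=:y_2\in\widetilde{C\cap U}$ along the same subnet. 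Then $x=\frac{y_1+y_2}{2}$ forces $y_1=y_2=x$ by (ii$'$), so $c_\gamma\to x$ in $\sigma(E,E^*)$, contradicting that it stays out of the neighbourhood.

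For (ii$'$)$\Leftrightarrow$(iii) I would invoke Choquet's Lemma (Proposition~\ref{Lemma_Choquet_LCS}): applied to the compact convex set $\widetilde{C\cap U}$ it gives that the $\sigma(E^{**},E^*)$-open slices at $x$ form a base there, and intersecting these with $C\cap U$, equivalently transporting along the $\sigma(E,E^*)$-$\sigma(E^{**},E^*)$ homeomorphism $J_{E}$, yields (iii). Conversely, if $x=\frac{y_1+y_2}{2}$ with $y_1\neq y_2$ in $\widetilde{C\cap U}$, pick $z^*\in E^*$ separating them and set $\delta=\frac12|y_1(z^*)-y_2(z^*)|$; any slice $\{w^*<\mu\}\cap(C\cap U)$ containing $x$ has $w^*(x)<\mu$, hence $\min\{y_1(w^*),y_2(w^*)\}<\mu$, so one of $y_1,y_2$ lies in the open slice $\{w^*<\mu\}^{**}\cap\widetilde{C\cap U}$, which is contained in the $\sigma(E^{**},E^*)$-closure of that slice. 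Thus no slice at $x$ can be confined to the weak neighbourhood $\{|z^*(\cdot)-z^*(x)|<\delta/2\}$, contradicting (iii).

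Finally, for (ii)$\Leftrightarrow$(ii$'$) one direction is trivial, since $\widetilde{C\cap U}\subseteq\widetilde{C}$ and an extreme point of the larger set lying in the smaller one is extreme there. The \emph{main obstacle} is (ii$'$)$\Rightarrow$(ii), i.e.\ upgrading local to global extremality, and here boundedness and star-shapedness combine. If $\ceroe=\frac{y_1+y_2}{2}$ with $y_2=-y_1\neq\ceroe$ in $\widetilde{C}$, take $c_\gamma\in C$ with $c_\gamma\to y_1$ in $\sigma(E^{**},E^*)$; this net is weakly convergent, hence $\sigma(E,E^*)$-bounded, hence $\tau$-bounded by Mackey's theorem. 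Passing to a balanced convex neighbourhood $U_0\subseteq U$, boundedness gives $\lambda>0$ with $\lambda^{-1}c_\gamma\in U_0$ for all $\gamma$, so for fixed $s\in(0,\min\{1,\lambda^{-1}\}]$ the scaled net satisfies $sc_\gamma\in C\cap U_0\subseteq C\cap U$ (star-shapedness and balancedness) and $sc_\gamma\to sy_1$ in $\sigma(E^{**},E^*)$; hence $sy_1,-sy_1\in\widetilde{C\cap U}$, giving the nontrivial representation $\ceroe=\frac{sy_1+(-sy_1)}{2}$ and contradicting (ii$'$). The only delicate points are the appeal to Mackey's theorem to absorb the approximating net into $U_0$ and the $\sigma(E^{**},E^*)$-closure/slice inclusions; everything else is a compactness or separation argument made possible by the boundedness of $C\cap U$.
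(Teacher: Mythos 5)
Most of your scheme is sound and stays close to the paper's own toolkit: the translation to $\ceroe$, the $\sigma(E^{**},E^*)$-compactness of $\widetilde{C\cap U}$ obtained from boundedness via polars, the equivalence (i)$\Leftrightarrow$(ii$'$) (Proposition~\ref{Prop_weakly_strongly_extreme_implies_Convex_tilde_extreme} in one direction, subnet extraction inside the compact set in the other), and (ii$'$)$\Leftrightarrow$(iii) via Choquet's Lemma (Proposition~\ref{Lemma_Choquet_LCS}) and the slice separation argument are all correct. The proof collapses exactly at the step you yourself flag as the main obstacle, (ii$'$)$\Rightarrow$(ii). It rests on the claim that the net $(c_\gamma)\subset C$ with $c_\gamma\to y_1$ in $\sigma(E^{**},E^*)$ is ``weakly convergent, hence $\sigma(E,E^*)$-bounded, hence $\tau$-bounded by Mackey's theorem.'' Mackey's theorem (weakly bounded $\Rightarrow$ bounded) is fine, but the premise is false: unlike sequences, \emph{convergent nets need not be bounded}. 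As soon as $E$ is infinite dimensional, index a net by pairs $(V,n)$, where $V$ runs over the weak neighbourhoods of $\ceroe$ ordered by reverse inclusion and $n\in\N$, and pick $x_{(V,n)}\in V$ with $x_{(V,n)}\notin nU_0$ (possible because every weak neighbourhood of $\ceroe$ contains a nontrivial linear subspace, which is unbounded); this net converges weakly to $\ceroe$ while every tail of it is unbounded. Such a net can perfectly well lie in $C$.

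This is not a repairable slip inside your mechanism, because the fact you would need is essentially the statement being proved. The nets witnessing $y_1\in\widetilde{C}$ are handed to you, and nothing guarantees that $y_1$, or even some nonzero multiple of it, is a $\sigma(E^{**},E^*)$-limit of a net contained in some $C\cap nU_0$; asserting that is exactly the local-to-global upgrade (ii$'$)$\Rightarrow$(ii), so the appeal to boundedness begs the question. Note that the paper does not argue this way at all: it obtains (i)$\Rightarrow$(ii) by re-running the separation/Mazur-combination proof of Proposition~\ref{Prop_weakly_strongly_extreme_implies_Convex_tilde_extreme}, applying the strong-extremality modulus of $C\cap U$, and never tries to bound approximating nets. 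If you wish to keep your decomposition, the step can be saved, but by duality rather than by boundedness. From $\pm y_1\in\widetilde{C}$ one gets $|y_1(x^*)|\leq\sup_{c\in C}x^*(c)$ for every $x^*\in E^*$, and $M:=\sup\{|y_1(w^*)|\colon w^*\in U_0^{\circ}\}<\infty$ because $U_0^{\circ}$ is equicontinuous and $y_1\in E^{**}$. Since $U_0^{\circ}$ is $\sigma(E^*,E)$-compact (Alaoglu--Bourbaki), the set $\{x^*\colon \sup_{c\in C}x^*(c)\leq 1\}+U_0^{\circ}$ is $\sigma(E^*,E)$-closed and contains $\{x^*\colon \sup\{x^*(z)\colon z\in \overline{C}\cap U_0\}\leq 1\}$; evaluating $\pm y_1$ on it and polarizing back into $E^{**}$ (bipolar theorem) gives $\pm\frac{1}{1+M}\,y_1\in\widetilde{\overline{C}\cap U_0}$, and a routine scaling by $s<1$ pushes these points into $\widetilde{C\cap U}$, contradicting (ii$'$). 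Some argument of this kind -- trading net boundedness for equicontinuity and weak-star compactness of polars -- is unavoidable here; Mackey's theorem cannot substitute for it.
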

\begin{proof}
(i)$\Rightarrow$(ii)  The proof of Proposition \ref{Prop_weakly_strongly_extreme_implies_Convex_tilde_extreme} can be easily adapted changing the role of $C$ there by $C\cap U$ now.

(ii)$\Rightarrow$(iii) Assertion (i) in Proposition \ref{Prop_pseudoslices_base_LCS_convex_set} is satisfied. Then, the proof of assertion (ii) in Proposition \ref{Prop_pseudoslices_base_LCS_convex_set} works here.

(iii)$\Rightarrow$(i) Let us consider $x\in C\cap U$, and two nets $(y_{\gamma})_{\gamma \in \Gamma}$ and $(z_{\gamma})_{\gamma \in \Gamma}$ in $C\cap U$ such that $\lim_{\gamma}\frac{y_{\gamma}+z_{\gamma}}{2}=x$. In order to show that $(y_{\gamma})_{\gamma \in \Gamma}$ converges to $x$ under $\sigma(E,E^*)$ relative to $C\cap U$ we will check that
\begin{equation}\label{Eq_Tma_equiv_dual_cone_pointed_convex}
x\in \overline{\{y_{\gamma_{\beta}}\colon \beta \in \Gamma'\}}^{(C\cap U,\,\sigma(E,E^*)|_{C\cap U})} 
\end{equation}
for every subnet $(y_{\gamma_{\beta}})_{\beta \in \Gamma'}$ of $(y_{\gamma})_{\gamma \in \Gamma}$. For simplicity of notation we will prove (\ref{Eq_Tma_equiv_dual_cone_pointed_convex}) for the initial net $(y_{\gamma})_{\gamma \in \Gamma}$.  Assume that (\ref{Eq_Tma_equiv_dual_cone_pointed_convex}) is not true for $(y_{\gamma})_{\gamma \in \Gamma}$. Then, there exist $x^* \in E^*$ and $\alpha \in \R$ such that $x^*(x)<\alpha$ and $y_{\gamma}\not \in \{x^*<\alpha\}\cap C\cap U$, $\forall \gamma \in \Gamma$. We claim that $(z_{\gamma})_{\gamma}$ converges to $x$ under the topology $\sigma(E,E^*)$ relative to $C\cap U$. Indeed, fix arbitrary $y^* \in E^*$ and $\lambda \in \R$ such that $x \in V:=\{y^*<\lambda\}\cap\{x^*<\alpha\}\cap C \cap U$. Then, there exists $\gamma_0 \in \Gamma$ such that $\frac{y_{\gamma}+z_{\gamma}}{2} \in V$, $\forall \gamma \geq \gamma_0$. By convexity $z_{\gamma}\in V \subset \{y^*<\lambda\}\cap C \cap U$, $\forall \gamma \geq \gamma_0$. Hence $(z_{\gamma})_{\gamma}$, and consequently $(y_{\gamma})_{\gamma}$, converges to $x$ under the topology $\sigma(E,E^*)$ relative to $C\cap U$. This fact contradicts that $y_{\gamma}\not \in \{x^*<\alpha\}\cap C\cap U$, $\forall \gamma \in \Gamma$. Therefore (\ref{Eq_Tma_equiv_dual_cone_pointed_convex}) is true for $(y_{\gamma})_{\gamma}$.
\end{proof}

In the following result we strengthen the assumptions in Theorem \ref{Teo_dentability_ELC_widetildeC} to provide the reverse inclusion. Let us recall that a subset $A$ in a l.c.s. $E$ is said to be balanced if $\alpha A \subset A$, for every $\alpha \in [-1,1]$.
\begin{theo}\label{Teo_dentability_ELC_cond_fuertePC_convex}
Let $E$ be a l.c.s. and $C\subset E$ a convex subset. Then $\mbox{bPC}(C)\cap \mbox{ext}(\widetilde{C})\subset \mbox{bDP}(C)$.
\end{theo}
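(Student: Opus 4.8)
The plan is to reduce to the case $x=\ceroe$ and then, for an arbitrary $\tau$-neighbourhood of $\ceroe$, to manufacture a genuine $\sigma(E,E^*)$-slice of $C$ sitting inside it; by the slice characterization of denting points recorded just after Definition \ref{Defi_DentingPoint}, this is exactly what $\ceroe\in\mbox{DP}(C)$ demands, while the boundedness clause of $\mbox{bDP}(C)$ is literally the same condition already granted by $x\in\mbox{bPC}(C)$, so nothing extra has to be proved for it. First I would translate so that $\ceroe\in\mbox{bPC}(C)\cap\mbox{ext}(\widetilde C)$, the passage to general $x$ being routine as in Proposition \ref{Prop_pseudoslices_base_LCS_convex_set}. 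Applying that proposition with $r=1$, I obtain a closed $U\in\tau(\ceroe)$ such that $C\cap U$ is bounded and the open slices of $C\cap U$ containing $\ceroe$ form a $\sigma(E,E^*)$-neighbourhood base of $\ceroe$ relative to $C\cap U$.

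Next, fix an arbitrary $V_0\in\tau(\ceroe)$; since the space is locally convex and Hausdorff I may choose a closed, convex, balanced $V\in\tau(\ceroe)$ with $V\subset V_0\cap U$, with continuous Minkowski gauge $p_V$, so that $V=\{p_V\le 1\}$ and $\tfrac12 V=\{p_V\le \tfrac12\}$. Using $\ceroe\in\mbox{PC}(C)$ on the neighbourhood $\tfrac12 V$ produces $W\in\sigma(E,E^*)(\ceroe)$ with $W\cap C\subset\tfrac12 V$. As $W\cap(C\cap U)$ is a relative $\sigma(E,E^*)$-neighbourhood of $\ceroe$ in $C\cap U$, part (ii) of Proposition \ref{Prop_pseudoslices_base_LCS_convex_set} supplies $x^*\in E^*$ and $\lambda\in\R$ with $\ceroe\in\{x^*<\lambda\}\cap C\cap U\subset W\cap C\cap U\subset\tfrac12 V$; in particular $\lambda>0$ because $x^*(\ceroe)=0$.

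The hard part will be upgrading this slice of $C\cap U$ to an honest slice of $C$, that is, proving $\{x^*<\lambda\}\cap C\subset V$, since a priori a point of $C$ lying outside $U$ could still have small $x^*$-value and thus spoil the inclusion. I would resolve this by a radial gauge argument: take any $c\in\{x^*<\lambda\}\cap C$ and suppose $c\notin V$, so $p_V(c)>1$; set $s:=1/p_V(c)\in(0,1)$. Convexity of $C$ together with $\ceroe\in C$ gives $sc\in C$, while $p_V(sc)=s\,p_V(c)=1$ forces $sc\in V\subset U$, hence $sc\in C\cap U$; moreover $x^*(sc)=s\,x^*(c)<\lambda$ since $s\in(0,1)$ and $\lambda>0$. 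Consequently $sc\in\{x^*<\lambda\}\cap C\cap U\subset\tfrac12 V$, giving $p_V(sc)\le\tfrac12$, which contradicts $p_V(sc)=1$. Therefore $c\in V$, so $\{x^*<\lambda\}\cap C$ is a $\sigma(E,E^*)$-slice of $C$ containing $\ceroe$ and contained in $V\subset V_0$. As $V_0$ was arbitrary this yields $\ceroe\in\mbox{DP}(C)$, and combined with the already available bounded weak neighbourhood this gives $\ceroe\in\mbox{bDP}(C)$; undoing the translation completes the argument.
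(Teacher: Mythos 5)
Your proof is correct and follows essentially the same route as the paper's: reduce to $x=\ceroe$, invoke Proposition \ref{Prop_pseudoslices_base_LCS_convex_set} to obtain a bounded $C\cap U$ whose open slices form a relative $\sigma(E,E^*)$-neighbourhood base at $\ceroe$, and then upgrade a slice of $C\cap U$ contained in half of a target set to a slice of all of $C$ via the radial scaling argument (convexity through $\ceroe$ combined with a Minkowski gauge). If anything, your version is slightly tidier: the paper runs the gauge argument with the gauge of $U$ and the pair $\frac14 U\subset\frac12 U$ after rescaling the bounded weak neighbourhood $W$, and writes out the construction of only one slice inside $\frac12 U$, leaving the quantification over arbitrary neighbourhoods $V_0$ implicit, whereas you carry an arbitrary $V_0$ through the whole argument explicitly.
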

\begin{proof}
We first consider the case $x=\ceroe\in \mbox{bPC}(C)\cap \mbox{ext}(\widetilde{C})$. We fix $U \in \tau(\ceroe)$ given by Proposition~\ref{Prop_pseudoslices_base_LCS_convex_set}. It is not restrictive to assume that $U$ is balanced. By assumption there exists $W\in \sigma(E,E^*)(\ceroe)$ such that  $W\cap C$ is bounded. Then, we can choose $\rho>0$ such that $W\cap C \subset \rho U$. If $\rho \leq \frac{1}{4}$, then $W\cap C \subset \frac{1}{4}U$. Otherwise, we have $(\frac{1}{4\rho}W)\cap \frac{1}{4\rho}C \subset \frac{1}{4}U$, being $\frac{1}{4\rho}<1$. Since $\frac{1}{4\rho}W\in \sigma(E,E^*)(\ceroe)$  and $\ceroe\in \mbox{bPC}(\frac{1}{4\rho}C)\cap \mbox{ext}(\frac{1}{4\rho}\widetilde{C})$, we can apply assertion (ii) in Proposition~\ref{Prop_pseudoslices_base_LCS_convex_set}  to $(\frac{1}{4\rho}W)\cap \frac{1}{4\rho}C$ with the same neighbourhood $U$ and $r=\frac{1}{2}$. However, since $\ceroe \in (\frac{1}{4\rho}W)\cap \frac{1}{4\rho}C \subset  W\cap C$ we assume (for simplicity of the notation) that $W\cap C \subset \frac{1}{4}U$ and apply assertion (ii) in Proposition~\ref{Prop_pseudoslices_base_LCS_convex_set}  to $W\cap C$  with the former $U$ and $r=\frac{1}{2}$. Now, we pick  $x^* \in E^*$ and $\lambda > 0$ such that $\ceroe\in\{x^*<\lambda\}\cap C \cap \frac{1}{2}U \subset W\cap C \cap \frac{1}{2}U \subset \frac{1}{4}U$. We claim that $\{x^*<\lambda\}\cap C \subset \frac{1}{2}U\cap C$. Assume the contrary. Then there exists some $x \in C$, $x\not =\ceroe$, such that $x^*(x) <\lambda$ and $x \not \in \frac{1}{2} U$. Consider $r:=\inf\{s>0\colon x \in sU\}$. Then $r>\frac{1}{2}$. Next, we will check that $y=\frac{3}{8r}x\in \{x^*<\lambda\}\cap C \cap  \frac{1}{2}U\setminus \frac{1}{4}U$, the contradiction we were looking for. On the one hand, $x^*(y)=\frac{3}{8r}x^*(x)<\frac{3}{4}\lambda<\lambda$. On the other hand, $y=\frac{1}{2}\frac{x}{\frac{4}{3}r}$ and \(x\in \frac{4}{3}rU\), hence $y \in \frac{1}{2}U$. Finally, $y=\frac{3}{8r}x \in \frac{1}{4} U$ if and only if $\frac{3}{2r}x=\frac{x}{\frac{2}{3}r} \in U$, but  $x \not \in \frac{2}{3}r U$ by the minimality of $r$.

Now we consider the case $x\in \mbox{bPC}(C)\cap \mbox{ext}(\widetilde{C})$, $x\not =\ceroe$. It is clear that $\ceroe\in \mbox{bPC}(C-x)\cap \mbox{ext}(\widetilde{C}-x)$. By the first part of the proof we have $\ceroe \in \mbox{bDP}(C-x)$. Hence, $x \in \mbox{bDP}(C)$.
\end{proof}
The following result  generalizes  \cite[Theorem 4]{GARCIACASTANO2018} to locally convex spaces. This is a consequence of Propositions \ref{Prop_denting_implies_strongly_extreme_convex} and \ref{Prop_weakly_strongly_extreme_implies_Convex_tilde_extreme}, and Theorem~\ref{Teo_dentability_ELC_cond_fuertePC_convex}. 
\begin{theo}\label{Teo_Generalizacion_dentabilidad_convexos_RACSAM}
Let $E$ be a l.c.s. and $C \subset E$ a convex subset. Then
$\mbox{bDP}(C)=\mbox{bPC}(C)\cap  \sigma(E,E^*)\mbox{-s-ext}(C)=\mbox{bPC}(C)\cap  \mbox{ext}(\widetilde{C})$.
\end{theo}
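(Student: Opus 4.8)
The plan is to prove the two equalities at once by establishing the cyclic chain of inclusions
$$\mbox{bDP}(C)\ \subset\ \mbox{bPC}(C)\cap \sigma(E,E^*)\mbox{-s-ext}(C)\ \subset\ \mbox{bPC}(C)\cap \mbox{ext}(\widetilde{C})\ \subset\ \mbox{bDP}(C),$$
which forces the three sets to coincide. Each inclusion reduces to an already established fact, so the work is purely one of assembly; the only thing to watch is that the boundedness clause shared by $\mbox{bDP}$ and $\mbox{bPC}$ is transported along every step.

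For the first inclusion, I would separate it into $\mbox{bDP}(C)\subset \mbox{bPC}(C)$ and $\mbox{bDP}(C)\subset \sigma(E,E^*)\mbox{-s-ext}(C)$. The boundedness condition appearing in Definition~\ref{Defi_DentingPoint}(ii) and in Definition~\ref{Defi_PointContinuity_y_boundedPC}(ii) is verbatim the same (existence of $W\in\sigma(E,E^*)(x)$ with $W\cap C$ bounded), so it suffices to check $\mbox{DP}(C)\subset \mbox{PC}(C)$ and then intersect both sides with that common clause. But this is immediate from the slice reformulation recorded just after Definition~\ref{Defi_DentingPoint}: if $x\in\mbox{DP}(C)$, then for each $U\in\tau(x)$ there are $x^*\in E^*$ and $\lambda\in\R$ with $x\in\{x^*<\lambda\}\cap C\subset U$, and since $\{x^*<\lambda\}\in\sigma(E,E^*)(x)$ this exhibits the weak neighbourhood required by $\mbox{PC}(C)$. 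For the extreme-point half, Proposition~\ref{Prop_denting_implies_strongly_extreme_convex} gives $\mbox{DP}(C)\subset \mbox{s-ext}(C)$; because $\tau$ is finer than $\sigma(E,E^*)$, any net converging to $x$ in $\tau$ also converges weakly, so the defining implication of $\mbox{s-ext}$ is stronger than that of $\sigma(E,E^*)\mbox{-s-ext}$, whence $\mbox{s-ext}(C)\subset \sigma(E,E^*)\mbox{-s-ext}(C)$. Chaining yields $\mbox{bDP}(C)\subset \mbox{DP}(C)\subset \sigma(E,E^*)\mbox{-s-ext}(C)$, and together with $\mbox{bDP}(C)\subset\mbox{bPC}(C)$ this gives the first inclusion.

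The second inclusion is immediate from Proposition~\ref{Prop_weakly_strongly_extreme_implies_Convex_tilde_extreme}, which states $\sigma(E,E^*)\mbox{-s-ext}(C)\subset \mbox{ext}(\widetilde{C})$: intersecting both sides with $\mbox{bPC}(C)$ preserves it. The third inclusion is exactly Theorem~\ref{Teo_dentability_ELC_cond_fuertePC_convex}. Since all three inclusions hold, the three sets are equal. The one step deserving any attention --- and the only place where something beyond a direct citation happens --- is the passage $\mbox{s-ext}(C)\subset \sigma(E,E^*)\mbox{-s-ext}(C)$, which rests on the comparison of the initial and weak topologies; everything else is bookkeeping, since the genuine analytic content has already been absorbed into Theorem~\ref{Teo_dentability_ELC_cond_fuertePC_convex}.
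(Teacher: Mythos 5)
Your proposal is correct and follows essentially the same route as the paper: the paper proves this theorem precisely by combining Proposition~\ref{Prop_denting_implies_strongly_extreme_convex}, Proposition~\ref{Prop_weakly_strongly_extreme_implies_Convex_tilde_extreme}, and Theorem~\ref{Teo_dentability_ELC_cond_fuertePC_convex} into the same cyclic chain of inclusions, with the glue steps (the slice reformulation giving $\mbox{DP}(C)\subset\mbox{PC}(C)$, the inclusion $\mbox{s-ext}(C)\subset\sigma(E,E^*)\mbox{-s-ext}(C)$ from comparing $\tau$ with the weak topology, and the transport of the common boundedness clause) left implicit. Your write-up just makes that bookkeeping explicit.
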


The notions involved in the former characterization can be relaxed when dealing with bounded sets.
\begin{coro}\label{Coro_Generalizacion_dentabilidad_convexos_acotados_RACSAM}
Let $E$ be a l.c.s. and $C \subset E$ a bounded convex subset. Then
$\mbox{DP}(C)=\mbox{PC}(C)\cap  \sigma(E,E^*)\mbox{-s-ext}(C)=\mbox{PC}(C)\cap  \mbox{ext}(\widetilde{C})$.
\end{coro}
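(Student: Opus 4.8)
The plan is to derive this corollary directly from Theorem~\ref{Teo_Generalizacion_dentabilidad_convexos_RACSAM} by observing that, once $C$ is assumed bounded, the extra boundedness requirement built into the notions $\mbox{bDP}(C)$ and $\mbox{bPC}(C)$ becomes automatic, so that $\mbox{DP}(C)=\mbox{bDP}(C)$ and $\mbox{PC}(C)=\mbox{bPC}(C)$. This is precisely the remark made in the text immediately after Definition~\ref{Defi_PointContinuity_y_boundedPC}, and the work reduces to making that reduction explicit.

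First I would fix an arbitrary $x\in C$ and note that $E$ itself always belongs to the neighbourhood family $\sigma(E,E^*)(x)$. Choosing $W:=E$ we get $W\cap C=C$, which is bounded by hypothesis. Hence the second clause in Definition~\ref{Defi_DentingPoint}(ii) and in Definition~\ref{Defi_PointContinuity_y_boundedPC}(ii) --- namely the existence of some $W\in\sigma(E,E^*)(x)$ with $W\cap C$ bounded --- is satisfied for \emph{every} point of $C$. Consequently $x\in\mbox{bDP}(C)$ is equivalent to $x\in\mbox{DP}(C)$, and $x\in\mbox{bPC}(C)$ is equivalent to $x\in\mbox{PC}(C)$; that is, $\mbox{DP}(C)=\mbox{bDP}(C)$ and $\mbox{PC}(C)=\mbox{bPC}(C)$.

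With these two set equalities in hand, I would simply substitute them into Theorem~\ref{Teo_Generalizacion_dentabilidad_convexos_RACSAM}, which reads $\mbox{bDP}(C)=\mbox{bPC}(C)\cap\sigma(E,E^*)\mbox{-s-ext}(C)=\mbox{bPC}(C)\cap\mbox{ext}(\widetilde{C})$. Replacing $\mbox{bDP}(C)$ by $\mbox{DP}(C)$ and $\mbox{bPC}(C)$ by $\mbox{PC}(C)$ yields exactly the claimed chain of equalities $\mbox{DP}(C)=\mbox{PC}(C)\cap\sigma(E,E^*)\mbox{-s-ext}(C)=\mbox{PC}(C)\cap\mbox{ext}(\widetilde{C})$.

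I do not anticipate any genuine obstacle here: the corollary is a specialization rather than a new argument. The only point deserving a line of care is the justification that $W=E$ is admissible, i.e.\ that the whole space counts as a $\sigma(E,E^*)$-neighbourhood of each of its points, together with the trivial observation that $E\cap C=C$ inherits boundedness from $C$. Everything substantive has already been carried out in the proof of Theorem~\ref{Teo_Generalizacion_dentabilidad_convexos_RACSAM} (and, upstream, in Propositions~\ref{Prop_denting_implies_strongly_extreme_convex} and~\ref{Prop_weakly_strongly_extreme_implies_Convex_tilde_extreme} and Theorem~\ref{Teo_dentability_ELC_cond_fuertePC_convex}), so the corollary follows at once.
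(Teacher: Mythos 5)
Your proof is correct and is exactly the paper's intended argument: the corollary is stated as an immediate consequence of Theorem~\ref{Teo_Generalizacion_dentabilidad_convexos_RACSAM}, relying on the observation (already made after Definition~\ref{Defi_PointContinuity_y_boundedPC}) that for bounded $C$ the boundedness clauses in $\mbox{bDP}(C)$ and $\mbox{bPC}(C)$ hold automatically, so $\mbox{bDP}(C)=\mbox{DP}(C)$ and $\mbox{bPC}(C)=\mbox{PC}(C)$. Your explicit choice $W=E$ just spells out that reduction.
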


Let us recall that $\widehat{C}$ denotes the closure of $J_{E}(C)$ (the image of $C$ under the map $J_{E}:E \rightarrow E^{**}$) in $E^{**}$ under the topology $\beta(E^{**},E^*)$. Next, a new result which makes use of the set $\widehat{C}$.

\begin{theo}\label{Teo_dentability_ELC_C_convex}
Let $E$ be a l.c.s. and $C \subset E$ a convex subset. Then $DP(C)\subset PC(C)\cap \mbox{ext}(\widehat{C})$.
\end{theo}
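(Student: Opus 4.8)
The plan is to observe that this statement is essentially a corollary of Theorem~\ref{Teo_dentability_ELC_widetildeC}, which already gives $\mbox{DP}(C)\subset \mbox{PC}(C)\cap \mbox{ext}(\widetilde{C})$. In particular the inclusion $\mbox{DP}(C)\subset \mbox{PC}(C)$ requires no further argument, so the entire content to be verified is the refinement $\mbox{DP}(C)\subset \mbox{ext}(\widehat{C})$, that is, upgrading extremality in the (larger) $\sigma(E^{**},E^*)$-closure $\widetilde{C}$ to extremality in the (smaller) $\beta(E^{**},E^*)$-closure $\widehat{C}$.

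First I would fix $x\in \mbox{DP}(C)$. Since $x\in C$, we have $J_{E}(x)\in J_{E}(C)\subset \widehat{C}$, so (under the identification of $x$ with $J_{E}(x)$) the point $x$ genuinely lies in $\widehat{C}$, and it makes sense to ask whether it is extreme there. By Theorem~\ref{Teo_dentability_ELC_widetildeC} we know $x\in \mbox{ext}(\widetilde{C})$. The key observation is the elementary and general principle that an extreme point of a convex set remains extreme in any convex subset to which it belongs: recalling from the excerpt that $\widehat{C}\subset \widetilde{C}$, if $x=\frac{c_1+c_2}{2}$ with $c_1,c_2\in \widehat{C}$, then $c_1,c_2\in \widetilde{C}$ as well, and extremality of $x$ in $\widetilde{C}$ forces $x=c_1=c_2$. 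Hence $x\in \mbox{ext}(\widehat{C})$. Combining this with $x\in \mbox{PC}(C)$ yields $x\in \mbox{PC}(C)\cap \mbox{ext}(\widehat{C})$, which is the desired inclusion.

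There is essentially no genuine obstacle here; the only two points that must be recorded explicitly are (a) that $x$ actually belongs to $\widehat{C}$, which follows immediately from $x\in C$ and $J_E(C)\subseteq\widehat C$, and (b) the containment $\widehat{C}\subset \widetilde{C}$ stated earlier in the excerpt, which is what lets extremality descend from $\widetilde{C}$ to $\widehat{C}$. Both are routine, so the proof reduces to invoking Theorem~\ref{Teo_dentability_ELC_widetildeC} and the restriction property of extreme points. I would therefore keep the write-up to a few lines, emphasizing that this result sharpens Theorem~\ref{Teo_dentability_ELC_widetildeC} precisely because $\beta(E^{**},E^*)$ is finer than $\sigma(E^{**},E^*)$, making $\widehat{C}$ a smaller—and hence more informative—closure in which to locate the extreme point.
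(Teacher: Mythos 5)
Your proof is correct and follows essentially the same route as the paper: the paper also deduces $x \in \mbox{ext}(\widetilde{C})$ from the earlier results (Propositions \ref{Prop_denting_implies_strongly_extreme_convex} and \ref{Prop_weakly_strongly_extreme_implies_Convex_tilde_extreme}, which together give Theorem \ref{Teo_dentability_ELC_widetildeC}) and then concludes $x \in \mbox{ext}(\widehat{C})$ from the inclusion $x \in \widehat{C} \subset \widetilde{C}$, exactly as you do. Your write-up merely makes explicit the routine restriction property of extreme points that the paper leaves implicit.
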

\begin{proof}
Propositions \ref{Prop_denting_implies_strongly_extreme_convex} and \ref{Prop_weakly_strongly_extreme_implies_Convex_tilde_extreme} yield $x \in \mbox{ext}(\widetilde{C})$. Now, since $x \in \widehat{C}\subset \widetilde{C}$, it follows that $x\in \mbox{ext}(\widehat{C})$.
\end{proof}
The last part of this section is devoted to transform the former inclusion in Theorem~\ref{Teo_dentability_ELC_C_convex} into an equality changing the sets of denting points and points of continuity by the sets of bounded denting points and bounded points of continuity. For such a purpose, we have to stablish some previous results and introduce some new terminology. 

\begin{defin}\label{Defi_weak_star_PointContinuity_convex_set}
Let $E$ be a l.c.s. and $C\subset E^{**}$ a convex subset. We say that $x^{**}$ is a $\sigma(E^{**},E^*)$ point of continuity of $C$, written $x^{**} \in \sigma(E^{**},E^*)\mbox{-PC}(C)$, if for every $U \in \beta(E^{**},E^*)(x^{**})$, there exists some $W$ in $\sigma(E^{**},E^*)(x^{**})$ such that $x^{**} \in W\cap C\subset U$.
\end{defin}

Next, we restrict our study to a particular class of locally convex spaces. Let us recall that a barrel in a l.c.s. is a closed, convex, balanced, and absorbent subset.
\begin{defin}\label{Defi_infra_barrelled}
We say that a l.c.s. $E$ is infrabarreled (resp. barreled) if each barrel in $E$ which absorbs all bounded sets is a neighborhood of $\ceroe$ (resp. if every barrel is a neighborhood of $\ceroe$).
\end{defin}
A barreled space is always infrabarreled but the converse is not always true. Fréchet spaces are barreled spaces and metrizable locally convex spaces are infrabarreled. Therefore, our next results (stated for infrabarreled spaces)  can be applied to normed spaces as a particular case. On the other hand, locally convex spaces of second category, or even first countable, are infrabarreled. So, LF-spaces and LB-spaces  are infrabarreled (\cite[Theorem~4.5 and Proposition 4.10]{Osborne}).
Let us note that if $E$ is infrabarreled, then $J_{E}$ is a $\tau$-$\beta(E^{**},E^*)$ homeomorphism onto $J_{E}(E)$ such that $J^{-1}_{E}(A^{\circ})=A_{\circ}$ for every bounded subset $A \subset E^*$, (see \cite[Theorem 5.10]{Osborne}).

\begin{pro}\label{Prop_PC_equivale_weak_star_PC_convex}
Let $E$ be an infrabarreled  l.c.s. and $C \subset E$ a convex subset. If $c\in C$,  then $c \in \mbox{PC}(C)$ if and only if $c \in \sigma(E^{**},E^*)\mbox{-PC}(\widetilde{C})$.
\end{pro}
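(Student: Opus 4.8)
The plan is to transport both continuity conditions across the canonical embedding $J_E$ and to exploit the two homeomorphism properties of $J_E$ recorded above: it is always a $\sigma(E,E^*)$–$\sigma(E^{**},E^*)$ homeomorphism onto $J_E(E)$, and, because $E$ is infrabarreled, it is additionally a $\tau$–$\beta(E^{**},E^*)$ homeomorphism onto $J_E(E)$. Throughout I identify $c$ with $J_E(c)$ and $C$ with $J_E(C)$, so that $\widetilde C=\overline{C}^{\sigma(E^{**},E^*)}$ and $\widetilde C\cap E=\overline{C}^{\sigma(E,E^*)}$.

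For the implication $c\in\mbox{PC}(C)\Rightarrow c\in\sigma(E^{**},E^*)\mbox{-PC}(\widetilde C)$, I would fix $U\in\beta(E^{**},E^*)(c)$. Since basic strong neighbourhoods in $E^{**}$ are translates of polars $B^{\circ}$ of bounded subsets $B\subset E^*$, and every such polar is $\sigma(E^{**},E^*)$-closed, I may shrink $U$ and assume $U$ is $\sigma(E^{**},E^*)$-closed. The $\tau$–$\beta$ homeomorphism turns $U':=J_E^{-1}(U\cap E)$ into a member of $\tau(c)$, so the hypothesis $c\in\mbox{PC}(C)$ yields a $\sigma(E,E^*)$-open $W'\in\sigma(E,E^*)(c)$ with $c\in W'\cap C\subset U'$. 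Extending $W'$ to a $\sigma(E^{**},E^*)$-open set $W\in\sigma(E^{**},E^*)(c)$ with $W\cap E=W'$ (the weak–weak$^*$ homeomorphism makes this routine on basic neighbourhoods), I claim $c\in W\cap\widetilde C\subset U$. Here is the crux: since $W$ is $\sigma(E^{**},E^*)$-open and $\widetilde C=\overline{C}^{\sigma(E^{**},E^*)}$, the elementary open-set inequality $W\cap\overline{C}\subset\overline{W\cap C}^{\sigma(E^{**},E^*)}$ gives $W\cap\widetilde C\subset\overline{W\cap C}^{\sigma(E^{**},E^*)}$. But $W\cap C=W'\cap C\subset U'$ maps into $U$, so $W\cap C\subset U$, and as $U$ is $\sigma(E^{**},E^*)$-closed its closure remains inside $U$; hence $W\cap\widetilde C\subset U$, as required.

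The converse $c\in\sigma(E^{**},E^*)\mbox{-PC}(\widetilde C)\Rightarrow c\in\mbox{PC}(C)$ is the easier direction and needs no closure control. Given $U\in\tau(c)$, the $\tau$–$\beta$ homeomorphism produces $\widetilde U\in\beta(E^{**},E^*)(c)$ with $\widetilde U\cap E\subset U$; the hypothesis then delivers $W\in\sigma(E^{**},E^*)(c)$ with $c\in W\cap\widetilde C\subset\widetilde U$. Setting $W':=W\cap E$, which lies in $\sigma(E,E^*)(c)$ by the weak–weak$^*$ homeomorphism, one gets $W'\cap C=W\cap C\subset W\cap\widetilde C\subset\widetilde U$, and since $W'\cap C\subset E$ this forces $W'\cap C\subset\widetilde U\cap E\subset U$; together with $c\in W'\cap C$ this is exactly $c\in\mbox{PC}(C)$.

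I expect the main obstacle to be the forward direction, and specifically the control of the $\sigma(E^{**},E^*)$-closure. The two ingredients that make it work are the reduction to a $\sigma(E^{**},E^*)$-closed strong neighbourhood $U$ (legitimate because polars of bounded sets are weak-star closed) and the open-set closure inequality $W\cap\overline{C}\subset\overline{W\cap C}^{\sigma(E^{**},E^*)}$, valid precisely because $W$ is chosen open. Without the infrabarreledness hypothesis the passage between $\tau(c)$ and $\beta(E^{**},E^*)(c)$ would break down, which is exactly where that assumption is indispensable.
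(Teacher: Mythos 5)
Your proof is correct and takes essentially the same route as the paper's: both directions transport neighbourhoods through $J_E$, with infrabarreledness used exactly where you use it (to pull a strong neighbourhood in $E^{**}$ back to a $\tau$-neighbourhood of $c$ in the forward direction), and with strong neighbourhoods reduced to polars of bounded subsets of $E^*$. The only divergence is in the key containment $W\cap\widetilde{C}\subset U$: the paper proves it by a direct net-limit computation exploiting the polar form of the strong neighbourhood (choosing the approximating net ``without restriction'' inside the closed half-spaces), whereas you use the open-set inequality $W\cap\overline{C}^{\sigma(E^{**},E^*)}\subset\overline{W\cap C}^{\sigma(E^{**},E^*)}$ together with the weak-star closedness of $U$ --- the same mechanism, packaged slightly more robustly, since your choice of \emph{open} weak-star neighbourhoods sidesteps the delicate point in the paper's net argument when the limit lies on the boundary of the defining half-spaces.
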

\begin{proof}
For simplicity we only prove that $\ceroe \in \mbox{PC}(C)$ if and only if  $\ceroe \in \sigma(E^{**},E^*)\mbox{-PC}(\widetilde{C})$.

$\Leftarrow$ Let $U \in \tau(\ceroe)$. Since $J_{E}^{-1}$ is continuous, there exists $V \in \beta(E^{**},E^*)(\ceroe)$ such that $U=E\cap V$. Now, by assumption, there exists $W^{**}=\cap_{i=1}^n\{x^{*}_i<\lambda_i\}^{**} \in \sigma(E^{**},E^*)(\ceroe)$  such that $W^{**}\cap \widetilde{C}\subset V$, for some $n \geq 1$, $x^{*}_i \in E^*$, and $\lambda_i \in \R$. Now define $W:=\cap_{i=1}^n\{x^{*}_i<\lambda_i\}$, which verifies $W\cap C=W^{**}\cap C\subset V\cap C\subset U\cap C$.

$\Rightarrow$ Let $V \in \beta(E^{**},E^*)(\ceroe)$, it is not restrictive to assume that $V=A^{\circ}=\{x^{**}\in E^{**}\colon |x^{**}(x^*)|\leq 1,\, \forall x^* \in A\}$ for some $\beta(E^*,E)$-bounded subset $A\subset E^*$. Since $E$ is infrabarreled, $U:= J^{-1}_{E}(V)=A_{\circ} \in \tau(\ceroe)$, i.e., $U=\{x \in E \colon |x^{*}(x)|\leq 1,\, \forall x^* \in A\}$. Now, as $\ceroe \in  \mbox{PC}(C)$, there exists $W_1=\cap_{i=1}^n\{x^{*}_i \leq \alpha_i\}\in \sigma(E,E^*)(\ceroe)$ such that $W_1\cap C\subset U \cap C$, for some $n \geq 1$, $x^{*}_i \in E^*$, and $\alpha_i \in \R$. Define   $W_2:=\cap_{i=1}^n\{x^{*}_i \leq \alpha_i\}^{**}\in \sigma(E^{**},E^*)(\ceroe)$, we will check that $\widetilde{C}\cap W_2\subset \widetilde{C}\cap V$. Fix an arbitrary $x^{**} \in \widetilde{C}\cap W_2$. Then,  there exists a net $(c_{\gamma})_{\gamma \in \Gamma} \subset C$ converging to $x^{**}$ under the topology $\sigma(E^{**},E^*)$. It is not restrictive to assume that $x^{*}_i(c_{\gamma}) \leq \alpha_i$ for every $i$ and $\gamma$. Then, for every $\gamma$, we have $c_{\gamma}\in W_1\cap C \subset U\cap C$, which leads to $x^{**} \in V$. Indeed, if $x^*\in A$, then $|x^{**}(x^*)|=\lim_{\gamma}|x^{*}(c_{\gamma})|\leq 1$, because $(c_{\gamma})_{\gamma}\subset U=A_{\circ}$. As a consequence, $x^{**}\in A^{\circ}=V$ and the proof is over.
\end{proof}
\begin{pro}\label{Prop_C_tilde_pointed_C_bidual_pointed_convex}
Let $E$ be an infrabarreled  l.c.s. and $C \subset E$ a convex subset. Then $PC(C)\cap \mbox{ext}(\widehat{C})\subset \mbox{ext}(\widetilde{C})$.
\end{pro}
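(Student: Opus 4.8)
The plan is to reduce to the case $x=\ceroe$ and transport the point-of-continuity hypothesis into the second dual. Since $\ceroe\in\mbox{PC}(C)$ and $E$ is infrabarreled, Proposition~\ref{Prop_PC_equivale_weak_star_PC_convex} yields $\ceroe\in\sigma(E^{**},E^*)\mbox{-PC}(\widetilde{C})$. Moreover, because $J_E(C)\subset\widehat{C}\subset\widetilde{C}$ and $\widetilde{C}$ is $\sigma(E^{**},E^*)$-closed, one has $\widetilde{C}=\overline{\widehat{C}}^{\sigma(E^{**},E^*)}$, so the content of the statement is that extremality of $\ceroe$ in the strongly closed set $\widehat{C}$ is \emph{preserved} under the larger weak-star closure $\widetilde{C}$, the point-of-continuity property being the bridge. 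It is convenient to record extremality as pointedness: $\ceroe\in\mbox{ext}(\widehat{C})$ is equivalent to $\widehat{C}\cap(-\widehat{C})=\{\ceroe\}$, and similarly for $\widetilde{C}$. Thus the goal reduces to: $\widehat{C}\cap(-\widehat{C})=\{\ceroe\}$ together with $\sigma(E^{**},E^*)$-PC at $\ceroe$ forces $\widetilde{C}\cap(-\widetilde{C})=\{\ceroe\}$.

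I would then argue by contradiction, mirroring the convex-combination scheme of Proposition~\ref{Prop_weakly_strongly_extreme_implies_Convex_tilde_extreme}. Suppose there is $p\in\widetilde{C}\cap(-\widetilde{C})$ with $p\neq\ceroe$, and fix $x_0^*\in E^*$ and $\epsilon_0>0$ with $x_0^*(p)=3\epsilon_0>0$. Choose nets $(z^1_\gamma),(z^2_\gamma)\subset J_E(C)$ with $z^1_\gamma\to p$ and $z^2_\gamma\to -p$ under $\sigma(E^{**},E^*)$, arranged so that $x_0^*(z^1_\gamma)>2\epsilon_0$ and $x_0^*(z^2_\gamma)<-2\epsilon_0$ eventually. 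Their midpoints $w_\gamma:=\tfrac{z^1_\gamma+z^2_\gamma}{2}$ converge to $\ceroe$ in $\sigma(E^{**},E^*)$, and here the point-of-continuity hypothesis enters decisively: by $\sigma(E^{**},E^*)\mbox{-PC}(\widetilde{C})$ the convergence $w_\gamma\to\ceroe$ upgrades to the strong topology $\beta(E^{**},E^*)$. Since $J_E$ is a $\tau$–$\beta(E^{**},E^*)$ homeomorphism onto its image, the corresponding preimages $c^i_\gamma:=J_E^{-1}(z^i_\gamma)\in C$ satisfy $\tfrac{c^1_\gamma+c^2_\gamma}{2}\to\ceroe$ in $\tau$, while $x_0^*$ keeps $c^1_\gamma$ and $c^2_\gamma$ separated by $4\epsilon_0$. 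Passing, as in Proposition~\ref{Prop_weakly_strongly_extreme_implies_Convex_tilde_extreme}, to convex combinations $u_1:=\sum_j\lambda_j c^1_{\gamma_j}$ and $u_2:=\sum_j\lambda_j c^2_{\gamma_j}$ in $C$, one produces points of $C$ whose midpoint is \emph{strongly} as small as desired yet whose $x_0^*$-values remain on opposite sides of the gap $[\alpha,\beta]$ with $\beta-\alpha=4\epsilon_0$.

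The main obstacle is the final step: converting this configuration into a genuine violation of the pointedness of $\widehat{C}$. What I would aim for is to extract a $\beta(E^{**},E^*)$-cluster point $v^{**}$ of the net $(J_E(u_1))$; since $\widehat{C}$ is $\beta(E^{**},E^*)$-closed and convex and the midpoints tend strongly to $\ceroe$, this would place both $v^{**}$ and $-v^{**}$ in $\widehat{C}$, with $x_0^*(v^{**})\geq\beta>0$ and hence $v^{**}\neq\ceroe$, contradicting $\widehat{C}\cap(-\widehat{C})=\{\ceroe\}$. The delicate point is precisely the existence of such a cluster point, i.e.\ the relative compactness needed to pass to the limit inside the strongly closed set $\widehat{C}$; this is the locally convex incarnation of the Lin--Lin--Troyanski mechanism, where extremality plus point of continuity must be combined to compactify. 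I expect to secure the required compactness by the polar/Alaoglu device already exploited in Proposition~\ref{Prop_pseudoslices_base_LCS_convex_set} (bipolars of suitable neighbourhoods are $\sigma(E^{**},E^*)$-compact), together with Choquet's Lemma~\ref{Lemma_Choquet_LCS} to localize near $\ceroe$, the point-of-continuity property being exactly what confines the relevant portion of $\widetilde{C}$ to a set controlled by $\widehat{C}$. This compactification step, rather than any of the preceding net manipulations, is where the real work lies.
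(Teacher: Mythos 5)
Your setup is sound and matches the paper's: the reduction to $x=\ceroe$, the transfer of the point-of-continuity hypothesis into the bidual via Proposition~\ref{Prop_PC_equivale_weak_star_PC_convex}, and the reformulation of extremality at $\ceroe$ as pointedness $\widehat{C}\cap(-\widehat{C})=\{\ceroe\}$ are all correct. The gap is exactly where you locate it, and your proposed repair cannot close it. Because you average two \emph{moving} nets, the strong convergence $\tfrac{z^1_\gamma+z^2_\gamma}{2}\to\ceroe$ only tells you that $z^1_\gamma+z^2_\gamma\to\ceroe$ strongly; it gives no strong convergence (or strong clustering) of $z^1_\gamma$ itself, which is what you need in order to place a nonzero point in the \emph{strong} closure $\widehat{C}$. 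Your plan to extract a $\beta(E^{**},E^*)$-cluster point of $(J_E(u_1))$ by the polar/Alaoglu device fails: Alaoglu and bipolars yield compactness only for $\sigma(E^{**},E^*)$, so any cluster point obtained this way is merely a weak-star cluster point, which lands you back in $\widetilde{C}$ --- precisely the set you started from, so the argument is circular. Strong compactness of bounded sets in $E^{**}$ is in general unavailable (already for an infinite-dimensional Banach space the bidual ball is weak-star compact but not norm compact), and the PC hypothesis, being a condition on neighbourhoods of $\ceroe$ only, gives no control at the points $u_1$, which your construction keeps at $x_0^*$-distance at least $2\epsilon_0$ from $\ceroe$; Choquet's Lemma~\ref{Lemma_Choquet_LCS} does not help here either, since it produces weak-star slices, not strong compactness.

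The fix --- and this is the paper's one-line trick --- is to freeze one endpoint. Take the net $w_\gamma:=\tfrac{z^1_\gamma+(-p)}{2}$, where $z^1_\gamma\in J_E(C)$ converges weak-star to $p$ and $-p\in\widetilde{C}$ is held fixed. By convexity $w_\gamma\in\widetilde{C}$, and $w_\gamma\to\ceroe$ weak-star, so the transferred PC property upgrades this to $\beta(E^{**},E^*)$-convergence; but then $z^1_\gamma=2w_\gamma+p\to p$ \emph{strongly}, i.e.\ $p\in\widehat{C}$. Symmetrically, $-p\in\widehat{C}$, so $p\in\widehat{C}\cap(-\widehat{C})\setminus\{\ceroe\}$, contradicting $\ceroe\in\mbox{ext}(\widehat{C})$. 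No compactness, no Choquet, and no convex-combination step is needed: that step in Proposition~\ref{Prop_weakly_strongly_extreme_implies_Convex_tilde_extreme} served only to invoke the coincidence of closed convex hulls for compatible topologies, which plays no role once one endpoint is fixed.
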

\begin{proof}
Fix $x \in \mbox{PC}(C)\cap \mbox{ext}(\widehat{C})$. We first assume $x=\ceroe$. If $\ceroe \not \in \mbox{ext}(\widetilde{C})$, then there exist $x^{**}$, $y^{**} \in \widetilde{C}$ such that $\ceroe =\frac{x^{**}+y^{**}}{2}$.  We fix two nets in $C$, $(x_{\gamma})_{\gamma \in \Gamma}$ and  $(y_{\gamma})_{\gamma \in \Gamma}$, converging respectively to $x^{**}$ and $y^{**}$ under the topology $\sigma(E^{**},E^*)$. Then $\ceroe=\lim_{\gamma}\frac{x_{\gamma}+y^{**}}{2}$ under the topology $\sigma(E^{**},E^*)$. By Proposition \ref{Prop_PC_equivale_weak_star_PC_convex} we have $\ceroe\in\sigma(E^{**},E^*)\mbox{-PC}(\widetilde{C})$ and, as a consequence, $\ceroe=\lim_{\gamma}\frac{x_{\gamma}+y^{**}}{2}$ under the topology $\beta(E^{**},E^*)$. Then $x^{**}=-y^{**}=\lim_{\gamma}x_{\gamma}$ under the topology $\beta(E^{**},E^*)$, which yields $x^{**} \in \widehat{C}$. Analogously we have $y^{**} \in \widehat{C}$. Now, since $\ceroe =\frac{x^{**}+y^{**}}{2}$, it follows that $\ceroe \not \in \mbox{ext}(\widehat{C})$. Now, assume that $x\not =\ceroe$. It follows that $\ceroe \in \mbox{PC}(C-x)\cap \mbox{ext}(\widehat{C}-x)$. By the former paragraph we have $\ceroe \in \mbox{ext}(\widetilde{C}-x)$. Thus $x \in \mbox{ext}(\widetilde{C})$.
\end{proof}
Next, a new characterization of denting points which transforms the inclusion in Theorem~\ref{Teo_dentability_ELC_C_convex} into an equality. This is a consequence of Theorems~\ref{Teo_Generalizacion_dentabilidad_convexos_RACSAM} and \ref{Teo_dentability_ELC_C_convex},  and Proposition~\ref{Prop_C_tilde_pointed_C_bidual_pointed_convex}.
\begin{theo}\label{Teo_dentabilidad_convexos_nuevo}
Let $E$ be an infrabarreled l.c.s. and $C \subset E$ a convex subset. Then $\mbox{b}DP(C)=bPC(C)\cap \mbox{ext}(\widehat{C})$.
\end{theo}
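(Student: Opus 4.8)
The plan is to derive the equality as a straightforward assembly of the three cited results, treating the two inclusions separately; only one of them will actually use that $E$ is infrabarreled.

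First I would establish $\mbox{bDP}(C)\subset \mbox{bPC}(C)\cap \mbox{ext}(\widehat{C})$, an inclusion that in fact holds in any l.c.s. Taking $x\in \mbox{bDP}(C)$, Theorem~\ref{Teo_Generalizacion_dentabilidad_convexos_RACSAM} gives at once $x\in \mbox{bPC}(C)$ (indeed it yields the finer information $x\in \mbox{ext}(\widetilde{C})$ too). Since every bounded denting point is a denting point, $x\in \mbox{DP}(C)$, and Theorem~\ref{Teo_dentability_ELC_C_convex} then yields $x\in \mbox{ext}(\widehat{C})$. Combining these memberships gives $x\in \mbox{bPC}(C)\cap \mbox{ext}(\widehat{C})$.

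Next I would prove the reverse inclusion $\mbox{bPC}(C)\cap \mbox{ext}(\widehat{C})\subset \mbox{bDP}(C)$, which is where the infrabarreled hypothesis enters. Fix $x\in \mbox{bPC}(C)\cap \mbox{ext}(\widehat{C})$. Since $\mbox{bPC}(C)\subset \mbox{PC}(C)$, we have $x\in \mbox{PC}(C)\cap \mbox{ext}(\widehat{C})$, and Proposition~\ref{Prop_C_tilde_pointed_C_bidual_pointed_convex} (the only step invoking infrabarreledness) upgrades extremality from the strong-bidual closure $\widehat{C}$ to the larger weak-star closure $\widetilde{C}$, giving $x\in \mbox{ext}(\widetilde{C})$. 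Hence $x\in \mbox{bPC}(C)\cap \mbox{ext}(\widetilde{C})$, and a final application of Theorem~\ref{Teo_Generalizacion_dentabilidad_convexos_RACSAM} identifies this intersection with $\mbox{bDP}(C)$, completing the argument.

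I do not expect a substantial obstacle here, since all the genuine content has already been concentrated in the cited results and the statement is essentially a bookkeeping corollary. The single delicate ingredient is Proposition~\ref{Prop_C_tilde_pointed_C_bidual_pointed_convex}, whose proof rests on Proposition~\ref{Prop_PC_equivale_weak_star_PC_convex} and therefore on the infrabarreled hypothesis (which makes $J_{E}$ a $\tau$-$\beta(E^{**},E^*)$ homeomorphism onto its image and so links the point-of-continuity property in $E$ with $\sigma(E^{**},E^*)$-point-of-continuity in $\widetilde{C}$). This is precisely the mechanism that, by allowing one to pass between $\widehat{C}$ and $\widetilde{C}$, turns the inclusion of Theorem~\ref{Teo_dentability_ELC_C_convex} into the asserted equality.
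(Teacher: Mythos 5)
Your proof is correct and follows the paper's own route: the paper derives this theorem exactly as you do, combining Theorem~\ref{Teo_Generalizacion_dentabilidad_convexos_RACSAM}, Theorem~\ref{Teo_dentability_ELC_C_convex}, and Proposition~\ref{Prop_C_tilde_pointed_C_bidual_pointed_convex}, with the infrabarreled hypothesis entering only through the last of these to pass from $\mbox{ext}(\widehat{C})$ back to $\mbox{ext}(\widetilde{C})$.
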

As a consequence, the inclusion in Theorem~\ref{Teo_dentability_ELC_C_convex} becomes an equality when dealing with bounded sets.
\begin{coro}\label{Coro_dentabilidad_convexos_acotados}
Let $E$ be an infrabarreled l.c.s. and $C \subset E$ a bounded convex subset. Then $DP(C)=PC(C)\cap \mbox{ext}(\widehat{C})$.
\end{coro}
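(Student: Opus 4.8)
The plan is to deduce this corollary directly from Theorem~\ref{Teo_dentabilidad_convexos_nuevo}, the strategy being to show that the boundedness hypothesis on $C$ renders the prefix ``bounded'' redundant in both $\mbox{bDP}(C)$ and $\mbox{bPC}(C)$, so that the stated equality for $\mbox{bDP}(C)$ collapses onto the one claimed for $\mbox{DP}(C)$.

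First I would verify that, when $C$ is bounded, $\mbox{DP}(C)=\mbox{bDP}(C)$. Fix $x\in\mbox{DP}(C)$; by Definition~\ref{Defi_DentingPoint}(ii) it only remains to produce some $W\in\sigma(E,E^*)(x)$ for which $W\cap C$ is bounded. But the whole space $E$ lies in $\sigma(E,E^*)(x)$, and $E\cap C=C$ is bounded by hypothesis, so the side condition is automatic and $x\in\mbox{bDP}(C)$; the reverse inclusion $\mbox{bDP}(C)\subset\mbox{DP}(C)$ holds by definition. The identical observation applied to Definition~\ref{Defi_PointContinuity_y_boundedPC}(ii) yields $\mbox{PC}(C)=\mbox{bPC}(C)$, which is precisely the remark recorded after Definition~\ref{Defi_PointContinuity_y_boundedPC} stating that for bounded $C$ every point of continuity is a bounded point of continuity.

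With these two equalities in hand I would simply invoke Theorem~\ref{Teo_dentabilidad_convexos_nuevo}, applicable since $E$ is infrabarreled, to obtain $\mbox{bDP}(C)=\mbox{bPC}(C)\cap\mbox{ext}(\widehat{C})$, and then substitute $\mbox{bDP}(C)=\mbox{DP}(C)$ and $\mbox{bPC}(C)=\mbox{PC}(C)$ to conclude $\mbox{DP}(C)=\mbox{PC}(C)\cap\mbox{ext}(\widehat{C})$, as desired. Because this is a pure specialization of Theorem~\ref{Teo_dentabilidad_convexos_nuevo} to bounded convex sets, there is no genuine obstacle here; the only point requiring a moment's care is the entirely routine check that the boundedness clause built into the definitions of $\mbox{bDP}$ and $\mbox{bPC}$ is satisfied automatically once $C$ itself is assumed bounded.
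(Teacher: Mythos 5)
Your proposal is correct and follows exactly the route the paper intends: the corollary is stated as an immediate consequence of Theorem~\ref{Teo_dentabilidad_convexos_nuevo}, with the boundedness of $C$ making the ``bounded'' qualifiers in $\mbox{bDP}(C)$ and $\mbox{bPC}(C)$ automatic (take $W=E\in\sigma(E,E^*)(x)$, so $W\cap C=C$ is bounded), precisely as you argue and as the remark after Definition~\ref{Defi_PointContinuity_y_boundedPC} records.
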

Next, we state the version for normed spaces of Theorem \ref{Teo_dentabilidad_convexos_nuevo} which also provides a new characterization in such a context. Let us note that dealing with normed spaces, the map $J_E$ becomes the canonical inclusion from the normed space into its bidual Banach space. Let us fix some notation and terminology in such a context. From now,  $X$ will stand for a normed space and $\norm$ for its norm. In this framework, instead of $J_E$ we consider the canonical inclusion $i_X:(X,\norm)\rightarrow (X^{**},\norm^{**})$ which is an isometric isomorphism. Let us denote $\widehat{X}:=\overline{i_X(X)}^{(X^{**},\norm^{**})}$, we recall that the Banach space $(\widehat{X},\norm^{**})$ is a called the completion of $(X,\norm)$. Analogously, we define the completion of the set $A\subset X$ by $\widehat{A}:=\overline{i_X(A)}^{(X^{**},\norm^{**})}\subset \widehat{X}\subset X^{**}$. It follows easily that a normed space $X$ is a Banach space if and only if $X=\widehat{X}$; in such a case $\widehat{A}=\overline{A}$, i.e., the closure of $A$ on $(X,\norm)$.

\begin{coro}\label{Tma_dentability_convex_nuevo}
Let $X$ be a normed space and $C \subset X$ a convex subset. Then $DP(C)=PC(C)\cap \mbox{ext}(\widehat{C})$.
\end{coro}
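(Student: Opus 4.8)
The plan is to deduce this corollary directly from Theorem~\ref{Teo_dentabilidad_convexos_nuevo} by specializing to the normed setting, where the ``bounded'' qualifiers on denting points and points of continuity become automatic. A normed space is metrizable and hence infrabarreled, so Theorem~\ref{Teo_dentabilidad_convexos_nuevo} applies and yields $\mbox{bDP}(C)=\mbox{bPC}(C)\cap\mbox{ext}(\widehat{C})$. It therefore suffices to verify, for a normed space $X$, the two equalities $\mbox{DP}(C)=\mbox{bDP}(C)$ and $\mbox{PC}(C)=\mbox{bPC}(C)$. The latter is already recorded in the remark following Definition~\ref{Defi_PointContinuity_y_boundedPC} (in a normed space every point of continuity is a bounded point of continuity, and the reverse inclusion is immediate from the definition), so the only genuine task is the former.

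For $\mbox{DP}(C)=\mbox{bDP}(C)$, the inclusion $\mbox{bDP}(C)\subset\mbox{DP}(C)$ is immediate from Definition~\ref{Defi_DentingPoint}. For the reverse inclusion I would fix $x\in\mbox{DP}(C)$ and invoke the slice characterization stated right after Definition~\ref{Defi_DentingPoint}: taking $U$ to be a bounded norm-neighbourhood of $x$, say an open ball centred at $x$, there exist $x^*\in X^*$ and $\lambda\in\R$ with $x\in\{x^*<\lambda\}\cap C\subset U$. The half-space $\{x^*<\lambda\}$ is weakly open and contains $x$, hence serves as a set $W\in\sigma(X,X^*)(x)$ with $W\cap C\subset U$ bounded; therefore $x\in\mbox{bDP}(C)$. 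Combining the two equalities gives $\mbox{DP}(C)=\mbox{bDP}(C)=\mbox{bPC}(C)\cap\mbox{ext}(\widehat{C})=\mbox{PC}(C)\cap\mbox{ext}(\widehat{C})$, which is the claim.

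I would also record, for consistency, that the two readings of $\widehat{C}$ coincide: in the normed case the strong topology $\beta(X^{**},X^*)$ on the bidual is exactly the bidual-norm topology, since it is the topology of uniform convergence on the bounded subsets of $(X^*,\norm)$, so the completion $\widehat{C}=\overline{i_X(C)}^{(X^{**},\norm^{**})}$ of the present framework agrees with the $\beta(E^{**},E^*)$-closure appearing in Theorem~\ref{Teo_dentabilidad_convexos_nuevo}. No step presents a real obstacle here; the substance of the corollary is entirely carried by Theorem~\ref{Teo_dentabilidad_convexos_nuevo}, and the mild work left is the routine but necessary observation that in normed spaces the boundedness conditions defining $\mbox{bDP}$ and $\mbox{bPC}$ come for free, so that those notions collapse onto $\mbox{DP}$ and $\mbox{PC}$ respectively.
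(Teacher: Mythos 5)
Your proposal is correct and follows exactly the route the paper intends: the corollary is stated there without proof as the normed-space specialization of Theorem~\ref{Teo_dentabilidad_convexos_nuevo}, relying on normed spaces being (metrizable, hence) infrabarreled and on the boundedness qualifiers in $\mbox{bDP}$ and $\mbox{bPC}$ being automatic in that setting. Your explicit verification of $\mbox{DP}(C)=\mbox{bDP}(C)$ via a slice contained in a ball, together with the identification of $\beta(X^{**},X^*)$ with the bidual-norm topology, supplies precisely the routine details the paper leaves implicit.
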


The version for Banach spaces of Theorem \ref{Teo_dentabilidad_convexos_nuevo} contains, as a particular case, the well-known characterization of denting points by Lin-Lin-Troyanski in \cite{Lin1988}.

\begin{coro}\label{Tma_dentability_convex_contiene_LinLinTro}
Let $Y$ be a Banach space and $C \subset Y$ a convex subset. Then $DP(C)=PC(C)\cap \mbox{ext}(\overline{C})$.
\end{coro}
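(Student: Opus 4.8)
The plan is to obtain this statement as a direct specialization of Corollary~\ref{Tma_dentability_convex_nuevo}. Since a Banach space is in particular a normed space, Corollary~\ref{Tma_dentability_convex_nuevo} applies with $X=Y$ and already gives $DP(C)=PC(C)\cap \mbox{ext}(\widehat{C})$, where $\widehat{C}=\overline{i_Y(C)}^{(Y^{**},\norm^{**})}$ is the completion of $C$. It therefore only remains to identify $\widehat{C}$ with the norm closure $\overline{C}$ of $C$ in $Y$, and the whole proof reduces to this one observation.

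The key (and essentially only) point is that completeness of $Y$ forces the completion and the norm closure to coincide. Indeed, since $Y$ is a Banach space, the isometric image $i_Y(Y)$ is complete, hence $\norm^{**}$-closed in $Y^{**}$; thus $Y=\widehat{Y}$, as recorded in the text preceding the statement. For any subset $A\subset Y$ this yields $\widehat{A}=\overline{A}$: because $i_Y$ is an isometry onto the closed subspace $i_Y(Y)$, the $\norm^{**}$-closure of $i_Y(A)$ in $Y^{**}$ is contained in $i_Y(Y)$ and, computed in the subspace topology transported from $Y$, equals $i_Y$ applied to the $\norm$-closure of $A$ in $Y$, which under the canonical identification is precisely $\overline{A}$. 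Taking $A=C$ gives $\widehat{C}=\overline{C}$.

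Substituting $\widehat{C}=\overline{C}$ into the equality furnished by Corollary~\ref{Tma_dentability_convex_nuevo} produces $DP(C)=PC(C)\cap \mbox{ext}(\overline{C})$, as required. I do not expect any genuine obstacle here: the entire content lies in Theorem~\ref{Teo_dentabilidad_convexos_nuevo} and its normed-space consequence Corollary~\ref{Tma_dentability_convex_nuevo}, and the present corollary merely records what those results say once the ambient space is complete. Finally, to confirm that this recovers the Lin--Lin--Troyanski characterization, one specializes further to a closed, convex, bounded set $C$: then $\overline{C}=C$, so the equality reads $DP(C)=PC(C)\cap \mbox{ext}(C)$, which for an extreme point $x\in \mbox{ext}(C)$ is exactly the equivalence $x\in DP(C)\iff x\in PC(C)$.
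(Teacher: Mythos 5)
Your proof is correct and follows essentially the same route as the paper, which presents this corollary without proof as the Banach-space specialization of Corollary~\ref{Tma_dentability_convex_nuevo}, invoking the identification $\widehat{A}=\overline{A}$ for subsets of a Banach space stated in the paragraph preceding that corollary. Your explicit argument that completeness of $Y$ makes $i_Y(Y)$ closed in $Y^{**}$ and hence $\widehat{C}=\overline{C}$ simply fills in what the paper records as ``it follows easily,'' so there is nothing to object to.
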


\section{Weak property ($\pi$) of cones in locally convex spaces}\label{Section_cones_LCS}
In the first part of this section we state the version for cones of the results from the former section. Later, we obtain some characterizations regarding dentability of cones which are not consequences of the results of the former section. Among other results, we obtain some equivalences for the weak property (\(\pi\)), for the angle property, and some other related.

Let us introduce some terminology. A cone $\cK$ in $E$ is a convex subset such that $\lambda k \in \cK$ for every $\lambda \geq 0$ and $k \in \cK$. We say that $\cK$ is pointed if $\cK\cap (-\cK)=\{\ceroe\}$ or equivalently if $\ceroe \in \mbox{ext}(\cK)$. 
Given a cone $\mathcal{K}$ in a l.c.s. $E$, we define the dual cone of $\mathcal{K}$ by
\(\mathcal{K}^*:=\{x^{*} \in E^*\colon x^{*}(k) \geq 0,\, \forall k \in \mathcal{K}\}\subset E^*\),
and the bidual cone of $\mathcal{K}$ by
\(\mathcal{K}^{**}:=\{x^{**} \in E^{**} \colon x^{**}(x^{*}) \geq 0,\, \forall x^{*} \in \mathcal{K}^*\}\subset E^{**}\). Given a set $A \subset E$, we denote the linear hull of $A$ by span$(A)$.
\begin{pro}\label{Prop_cierre_w*_cono_es_cono_bidual}
Let $E$ be a l.c.s. and $\cK\subset E$ a cone. The following statements hold true.
\item[(i)] $\widetilde{\mathcal{K}}=\mathcal{K}^{**}$.
\item[(ii)] $\mathcal{K}^{**}$ is pointed if and only if  span$(\mathcal{K}^{*})$ is $\beta(E^*,E)$-dense in $E^*$
\end{pro}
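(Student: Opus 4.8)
The plan is to prove the two parts separately, with part (i) being the key structural fact and part (ii) following from it via a standard bipolar/duality argument.

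For part (i), $\widetilde{\mathcal{K}}=\mathcal{K}^{**}$, I would argue by double inclusion. The inclusion $\widetilde{\mathcal{K}}\subset \mathcal{K}^{**}$ is the easy direction: each $x^*\in\mathcal{K}^*$ is nonnegative on $J_E(\mathcal{K})$, so the set $\{x^{**}\colon x^{**}(x^*)\geq 0\}$ is $\sigma(E^{**},E^*)$-closed and contains $J_E(\mathcal{K})$; intersecting over all $x^*\in\mathcal{K}^*$ gives that $\widetilde{\mathcal{K}}=\overline{J_E(\mathcal{K})}^{\sigma(E^{**},E^*)}\subset\mathcal{K}^{**}$. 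For the reverse inclusion $\mathcal{K}^{**}\subset\widetilde{\mathcal{K}}$, I would argue by contradiction using Hahn--Banach separation in the dual pair $\langle E^*,E^{**}\rangle$. Since $\widetilde{\mathcal{K}}$ is a $\sigma(E^{**},E^*)$-closed convex cone, if some $x^{**}_0\in\mathcal{K}^{**}$ failed to lie in $\widetilde{\mathcal{K}}$, then by the separation theorem (applied exactly as in the proof of Proposition~\ref{Lemma_Choquet_LCS}, citing \cite[Theorem 3.32]{Montensinos2ndbook}) there would be some $x^*\in E^*$ strictly separating $x^{**}_0$ from $\widetilde{\mathcal{K}}$. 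Because $\widetilde{\mathcal{K}}$ is a cone containing $\ceroe$, the usual cone argument forces $x^*(k)\geq 0$ for all $k\in\mathcal{K}$ (so $x^*\in\mathcal{K}^*$) while $x^{**}_0(x^*)<0$, contradicting $x^{**}_0\in\mathcal{K}^{**}$.

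For part (ii), the equivalence ``$\mathcal{K}^{**}$ pointed $\iff$ $\mathrm{span}(\mathcal{K}^*)$ is $\beta(E^*,E)$-dense in $E^*$'', I would reformulate pointedness using part (i): $\mathcal{K}^{**}$ is pointed exactly when $\mathcal{K}^{**}\cap(-\mathcal{K}^{**})=\{\ceroe\}$, i.e.\ when the only $x^{**}$ with $x^{**}(x^*)=0$ for all $x^*\in\mathcal{K}^*$ is $x^{**}=\ceroe$. That condition says precisely that the annihilator $(\mathcal{K}^*)^{\perp}$ in $E^{**}$ (equivalently, the annihilator of $\mathrm{span}(\mathcal{K}^*)$, since annihilators of a set and of its linear span coincide) is trivial. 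The plan is then to invoke the bipolar/annihilator duality: in the dual pair $\langle E^*,E^{**}\rangle$, a linear subspace $M\subset E^*$ has trivial annihilator in $E^{**}$ if and only if $M$ is dense in $E^*$ for any topology consistent with the pair, and here the relevant closure is the $\beta(E^*,E)$-closure. Concretely, $\overline{\mathrm{span}(\mathcal{K}^*)}^{\beta(E^*,E)}=E^*$ iff $\mathrm{span}(\mathcal{K}^*)^{\perp}=\{\ceroe\}$ in $E^{**}$, which is exactly the pointedness of $\mathcal{K}^{**}$.

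The step I expect to be the main obstacle is making the closure in part (ii) match $\beta(E^*,E)$ rather than the weak-star topology $\sigma(E^*,E^{**})$. The clean annihilator duality gives density in the $\sigma(E^*,E^{**})$-topology (or in the weak topology of the pair), and one must check that a linear subspace of $E^*$ has the same closure under $\beta(E^*,E)$ as under $\sigma(E^*,E^{**})$. This holds because for \emph{convex} (in particular, linear) subsets the closure is the same for all locally convex topologies consistent with the dual pair, a fact already used in the proof of Proposition~\ref{Prop_weakly_strongly_extreme_implies_Convex_tilde_extreme}; I would apply precisely this principle to identify the two closures and complete the argument.
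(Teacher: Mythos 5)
Your proposal is correct. Part (i) matches the paper's proof essentially verbatim: the easy inclusion $\widetilde{\cK}\subset\cK^{**}$ is taken for granted there too, and the reverse inclusion is obtained exactly as you describe, by separation in the pair $\langle E^*,E^{**}\rangle$ (the paper also cites \cite[Theorem 3.32]{Montensinos2ndbook}) followed by the scaling argument showing that the separating functional is nonnegative on $\cK$, hence lies in $\cK^*$, while taking a negative value at the offending $x^{**}$.

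For part (ii) you take a genuinely different, though closely related, route. The paper proves both implications by explicit Hahn--Banach arguments in $(E^*,\beta(E^*,E))$: for one direction it separates a point of $E^*\setminus\overline{\mathrm{span}}^{\beta(E^*,E)}(\cK^*)$ from that closed subspace, producing a nonzero element of $\cK^{**}\cap(-\cK^{**})$; for the other it uses that any $x^{**}\in\cK^{**}\cap(-\cK^{**})$ vanishes on $\mathrm{span}(\cK^*)$ and is $\beta(E^*,E)$-continuous, hence vanishes on the whole of $E^*$ by density. You instead reformulate pointedness of $\cK^{**}$ as triviality of the annihilator of $\mathrm{span}(\cK^*)$ and then invoke the bipolar theorem together with the principle that convex sets have the same closure under all topologies compatible with $\langle E^*,E^{**}\rangle$. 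Both arguments pivot on exactly the same fact, namely that $E^{**}$ is by definition the dual of $(E^*,\beta(E^*,E))$, so that $\beta(E^*,E)$ is compatible with this pair; your version outsources the separation work to standard duality theorems and is more modular, while the paper's is self-contained. The one point you should make explicit in a write-up is precisely this compatibility of $\beta(E^*,E)$ with $\langle E^*,E^{**}\rangle$: without observing that $(E^*,\beta(E^*,E))'=E^{**}$, the ``same closed convex sets'' principle does not apply, and $\beta(E^*,E)$-density would a priori be stronger than $\sigma(E^*,E^{**})$-density. You correctly flagged this as the main obstacle and your resolution of it is sound.
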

\begin{proof}
(i) It is sufficient to show that $\cK^{**} \subset \widetilde{\mathcal{K}}$. Assume that there exists some $x^{**} \in \cK^{**} \setminus  \widetilde{\mathcal{K}}$. Since $\sigma(E^{**},E^*)$ is compatible with the dual par $<E^*,E^{**}>$, we can apply \cite[Theorem 3.32]{Montensinos2ndbook} and choose $x^{*} \in E^*$ and $\alpha \in \R$ such that $x^{**}(x^{*})<\alpha \leq \inf\{y^{**}(x^{*})\colon y^{**} \in \widetilde{\mathcal{K}}\}\leq \inf\{x^{*}(k)\colon k \in \cK\} \leq 0$. Since $\cK$ is a cone, for every $\lambda>0$ and $k \in \cK$, $\lambda k \in \cK$. Hence $\inf\{x^*(k)\colon k \in \cK\}\geq \frac{\alpha}{\lambda}$, for every $\lambda>0$. Now, taking limit as $\lambda \rightarrow +\infty$, we have that $\inf\{x^*(k)\colon k \in \cK\}=0$. This implies that $x^*\in \cK^*$, $\alpha \leq 0$, and that $x^{**}(x^*)<0$. The latter contradicts $x^{**}\in \cK^{**}$.
%
%
%
%

(ii)  $\Rightarrow$ Assume that there exists some $x^{*} \in E^*\setminus \overline{\mbox{span}}^{\beta(E^*,E)}(\cK^*)$. Then, there exist $x^{**} \in E^{**}$, $x^{**} \not = 0$, and $\alpha \in \R$ such that $x^{**}(x^{*})<\alpha \leq \inf\{x^{**}(y^{*})\colon y^{*} \in \overline{\mbox{span}}^{\beta(E^*,E)}(\cK^*)\}$. Thus $x^{**}(y^{*})=0$, $\forall y^{*} \in \overline{\mbox{span}}^{\beta(E^*,E)}(\cK^*)$. As a consequence, $x^{**} \in \cK^{**}\cap (-\cK^{**})$. As $\cK^{**}$ is pointed we have $x^{**}=0$,  a contradiction.

$\Leftarrow$ Let $x^{**} \in \cK^{**}\cap (-\cK^{**})$. Then $0 \leq x^{**}(x^{*}) \leq 0$, $\forall x^{*} \in \cK^*$. Thus $x^{**}=0$ on span$(\cK^*)$. Now, continuity of $x^{**}$ on $(E^*,\beta(E^*,E))$ yields $x^{**}=0$ on $E^*$. It follows that $\cK^{**}$ is pointed.
\end{proof}

Next, we adapt from general convex sets to cones some results from the previous section. These adaptations, together with Proposition \ref{Prop_cierre_w*_cono_es_cono_bidual}, provide new characterizations for the weak property ($\pi$). Let us note that the condition $\ceroe \in \mbox{PC}(\cK)$ in the definition of bounded point of continuity (Definition~\ref{Defi_PointContinuity_y_boundedPC} (ii)) is redundant when it is applied to cones. Indeed, fix $\cK$ a cone and $U \in \tau(0)$ (assumed balanced). Let $r > 0$ be such that $W\cap \cK\subset rU$. It is not restrictive to assume that $W=\cap_{i=1}^n\{x^{*}_i\leq \alpha_i\}$, for some $x^{*}_i \in E^*$, $\alpha_i \in \R$, $i\in \{1,\ldots,n\}$, $n \in \N$. Then $\cap_{i=1}^n\{x^{*}_i \leq \frac{\alpha_i}{r}\}\cap \cK \subset U$.  By Theorem~\ref{Teo_dentability_ELC_widetildeC} and  Proposition \ref{Prop_cierre_w*_cono_es_cono_bidual} (i) we have. 
\begin{theo}\label{Teo_dentability_ELC_K_bidual}
Let $E$ be a l.c.s. and $\mathcal{K} \subset E$ a cone. If $\ceroe \in DP(\mathcal{K})$, then $\ceroe\in PC(\mathcal{K})$ and $\cK^{**}$ is pointed.
\end{theo}
Before establishing the reverse of the former result, we state  some related results. First, we state the version for cones of Proposition \ref{Prop_pseudoslices_base_LCS_convex_set}. 

\begin{pro}\label{Prop_pseudoslices_base_LCS}
Let $E$ be a l.c.s. and $\mathcal{K} \subset E$ a cone. If  $\ceroe \in bPC(\mathcal{K})$ and $\cK^{**}$ is pointed, then there exists  $U \in \tau(\ceroe)$ such that for every $0 <r\leq 1$: 
\item[(i)] $\cK\cap rU$ is bounded.
\item[(ii)] The family of open slices of $\cK\cap rU$ containing $\ceroe$ forms a neighbourhood base of $\ceroe$ for the topology $\sigma(E,E^*)$ relative to $\cK\cap rU$.
\end{pro}

The next result is the version for cones of Theorem \ref{Teo_caract_extremal_C_tilde}. This generalizes \cite[Proposition 2.5]{GARCIACASTANO20151178} to the context of locally convex spaces solving \cite[Problem~2.6]{GARCIACASTANO20151178} in this new framework. 

\begin{theo} 
Let $E$ be a l.c.s. and $\mathcal{K} \subset E$ a cone. Assume that there exists $U \in \tau(\ceroe)$ such that $\cK\cap U$ is bounded. The following are equivalent.
\item[(i)] $\ceroe \in \sigma(E,E^*)\mbox{-s-ext}(\cK\cap U)$.
\item[(ii)] $\cK^{**}$ is pointed.
\item[(iii)] The family of open slices of $\cK\cap U$ containing $\ceroe$ forms a neighbourhood base of $\ceroe$ for the topology $\sigma(E,E^*)$ relative to $\cK\cap U$.
\end{theo}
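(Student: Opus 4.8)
The plan is to establish this as the cone version of Theorem~\ref{Teo_caract_extremal_C_tilde}, exploiting the homogeneity of cones to simplify the hypotheses. The key observation is that Proposition~\ref{Prop_cierre_w*_cono_es_cono_bidual}(i) identifies $\widetilde{\cK}=\cK^{**}$, so that condition (ii) here, namely ``$\cK^{**}$ is pointed,'' is by definition exactly ``$\ceroe\in\mbox{ext}(\cK^{**})=\mbox{ext}(\widetilde{\cK})$.'' This is precisely condition (ii) of Theorem~\ref{Teo_caract_extremal_C_tilde} applied with $x=\ceroe$ and $C=\cK$. Thus the strategy is to reduce all three statements to the corresponding statements for convex sets and invoke the earlier theorem directly.

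First I would verify that the hypotheses of Theorem~\ref{Teo_caract_extremal_C_tilde} are met. That theorem requires a neighbourhood $U\in\tau(x)$ for which $C\cap U$ is bounded \emph{and convex}. Here $U\in\tau(\ceroe)$ is assumed to make $\cK\cap U$ bounded, but convexity of $\cK\cap U$ is not automatic. So the first step is to replace $U$ by a convex neighbourhood: since $E$ is locally convex, one may choose $V\in\tau(\ceroe)$ convex with $V\subset U$, and then $\cK\cap V$ is both convex (as the intersection of two convex sets) and bounded (as a subset of the bounded set $\cK\cap U$). The point $\ceroe$ lies in $\cK\cap V$. With this $V$ in hand, Theorem~\ref{Teo_caract_extremal_C_tilde} (with $x=\ceroe$, $C=\cK$, and the neighbourhood $V$) gives the equivalence of the three statements \emph{with $U$ replaced by $V$}.

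The remaining work is to transfer the slice-base condition (iii) between $V$ and the original $U$. Here the homogeneity of the cone is decisive: because $\cK$ is a cone, the dilation $k\mapsto rk$ maps $\cK$ onto itself, and slices transform predictably under scaling. Concretely, I would use Proposition~\ref{Prop_pseudoslices_base_LCS}: once (ii) holds, that proposition produces a single neighbourhood for which the slice-base property holds at \emph{every} scale $0<r\leq1$, which frees the conclusion from dependence on the particular neighbourhood chosen. The equivalences (i)$\Leftrightarrow$(ii) transfer immediately, since condition (i) involves nets in $\cK\cap U$ (resp.\ $\cK\cap V$) and the $\sigma(E,E^*)$-strong-extremality of $\ceroe$ is a local property at the vertex that is insensitive to shrinking the bounding neighbourhood, again by homogeneity of the cone.

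The main obstacle I anticipate is the careful bookkeeping in passing the slice-neighbourhood-base property (iii) between the convex auxiliary neighbourhood $V$ and the given $U$, since a slice of $\cK\cap U$ need not restrict to a slice of $\cK\cap V$ and vice versa. This is exactly the kind of issue that Proposition~\ref{Prop_pseudoslices_base_LCS} is designed to absorb, so I would lean on it rather than argue by hand. In short, the proof is: reduce to a convex bounded neighbourhood, apply Theorem~\ref{Teo_caract_extremal_C_tilde} together with the identification $\widetilde{\cK}=\cK^{**}$ from Proposition~\ref{Prop_cierre_w*_cono_es_cono_bidual}(i), and then use the cone structure (via Proposition~\ref{Prop_pseudoslices_base_LCS}) to render the slice-base statement independent of the choice of neighbourhood.
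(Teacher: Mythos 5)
Your overall skeleton is exactly the paper's own route: the paper states this theorem without proof, as the cone instance of Theorem~\ref{Teo_caract_extremal_C_tilde} applied to $C=\cK$, combined with the identification $\widetilde{\cK}=\cK^{**}$ from Proposition~\ref{Prop_cierre_w*_cono_es_cono_bidual}(i) and the fact that a cone is pointed precisely when $\ceroe$ is one of its extreme points. Your observation that $\cK\cap U$ need not be convex (while Theorem~\ref{Teo_caract_extremal_C_tilde} requires $C\cap U$ bounded \emph{and convex}), and your move of passing to a convex $V\subset U$, is a legitimate refinement of a point the paper glosses over.

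However, the step where you transfer condition (iii) from $\cK\cap V$ back to the given $\cK\cap U$ has a genuine gap: Proposition~\ref{Prop_pseudoslices_base_LCS} is not available under the hypotheses you have. Its hypotheses are $\ceroe\in \mbox{bPC}(\cK)$ \emph{and} $\cK^{**}$ pointed, and by Definition~\ref{Defi_PointContinuity_y_boundedPC}(ii) the condition $\ceroe\in\mbox{bPC}(\cK)$ demands a $\sigma(E,E^*)$-neighbourhood $W$ with $\cK\cap W$ bounded; this is strictly stronger than the standing hypothesis of the theorem (a $\tau$-neighbourhood $U$ with $\cK\cap U$ bounded) and is not implied by (ii). Concretely, for the positive cone $\cK$ of $\ell^2$ the theorem's hypothesis holds with $U$ the unit ball and $\cK^{**}$ is pointed, yet every weak neighbourhood of $\ceroe$ meets $\cK$ in an unbounded set (it contains arbitrarily large positive multiples of suitable basis vectors $e_m$), so $\ceroe\notin\mbox{bPC}(\cK)$ and the proposition you lean on says nothing; note also that, even where it applies, it produces \emph{some} neighbourhood of its own choosing rather than the given $U$. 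The repair is precisely the by-hand scaling argument you chose to avoid, and it is short: since $\cK\cap U$ is bounded and $V$ is convex, pick $\rho\in(0,1]$ with $\rho(\cK\cap U)\subset V$; then $\rho(\cK\cap U)\subset\cK\cap V$, i.e.\ $\cK\cap U\subset\cK\cap\frac{1}{\rho}V=\frac{1}{\rho}(\cK\cap V)$. Given a weak neighbourhood $W$ of $\ceroe$, apply (iii) for $\cK\cap V$ to the weak neighbourhood $\rho W$ to get $x^*\in E^*$ and $\lambda>0$ with $\ceroe\in\{x^*<\lambda\}\cap\cK\cap V\subset\rho W$; multiplying by $\frac{1}{\rho}$ gives $\{x^*<\lambda/\rho\}\cap\cK\cap\frac{1}{\rho}V\subset W$, hence $\ceroe\in\{x^*<\lambda/\rho\}\cap\cK\cap U\subset W$, which is (iii) for the given $U$. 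The same scaling settles the direction from (i) for $\cK\cap V$ to (i) for $\cK\cap U$, which you assert but do not prove: if $(c_{\gamma})$, $(c_{\gamma}')$ are nets in $\cK\cap U$ whose midpoints $\tau$-converge to $\ceroe$, apply (i) for $\cK\cap V$ to $(\rho c_{\gamma})$, $(\rho c_{\gamma}')\subset\cK\cap V$. With these two substitutions in place of the appeal to Proposition~\ref{Prop_pseudoslices_base_LCS}, your proof is complete and coincides with the paper's intended derivation.
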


\begin{defin}
Let $E$ be a l.c.s. and $\mathcal{K} \subset E$ a cone. We say that $\cK$ satisfies the weak property ($\pi$) if there exists $x^{*} \in \cK^*$ and $\alpha >0$ such that $\cK\cap \{x^{*}\leq \alpha\}$ is bounded.
\end{defin}

It is straightforward to show that $\cK$ satisfies the weak property ($\pi$) if and only if $\ceroe \in \mbox{bDP}(\cK)$. However, we will use the terminology  weak property ($\pi$) just introduced because it is the usual one in the context of geometric properties of cones. It is also clear that if $\cK$ satisfies the weak property ($\pi$), then  $\cK$ is a pointed cone and  $\ceroe \in \mbox{DP}(\mathcal{K})$. Next, the version for cones of Theorem \ref{Teo_dentability_ELC_cond_fuertePC_convex}.  In such a result we strengthen the assumptions in Theorem~\ref{Teo_dentability_ELC_K_bidual} in order to provide a corresponding reverse. 

\begin{theo}\label{Teo_dentability_ELC_cond_fuertePC}
Let $E$ be a l.c.s. and $\mathcal{K} \subset E$ a cone. If $\ceroe \in bPC(\mathcal{K})$ and  $\cK^{**}$ is pointed, then $\cK$ satisfies the weak property ($\pi$). 
\end{theo}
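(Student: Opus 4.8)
The plan is to deduce this statement directly from its convex-set counterpart, Theorem~\ref{Teo_dentability_ELC_cond_fuertePC_convex}, by specializing to the convex set $C=\cK$ and taking the vertex $\ceroe$ as the distinguished point. The only genuine task is to match the cone-theoretic hypotheses and conclusion with the language of bounded points of continuity, bounded denting points, and extreme points of the $\sigma(E^{**},E^*)$-closure that appear in that theorem.

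First I would translate the pointedness hypothesis into an extremality condition. By Proposition~\ref{Prop_cierre_w*_cono_es_cono_bidual}(i) we have $\widetilde{\cK}=\cK^{**}$, and by the definition of a pointed cone the set $\cK^{**}$ is pointed precisely when $\ceroe\in\mbox{ext}(\cK^{**})$. Hence the assumption that $\cK^{**}$ is pointed is exactly the assertion $\ceroe\in\mbox{ext}(\widetilde{\cK})$. Combined with the standing hypothesis $\ceroe\in\mbox{bPC}(\cK)$, this places $\ceroe$ in the intersection $\mbox{bPC}(\cK)\cap\mbox{ext}(\widetilde{\cK})$.

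Next I would invoke Theorem~\ref{Teo_dentability_ELC_cond_fuertePC_convex} applied to the convex set $C=\cK$, which gives the inclusion $\mbox{bPC}(\cK)\cap\mbox{ext}(\widetilde{\cK})\subset\mbox{bDP}(\cK)$ and therefore $\ceroe\in\mbox{bDP}(\cK)$. Finally, recalling the equivalence noted immediately before the statement---namely that $\cK$ satisfies the weak property $(\pi)$ if and only if $\ceroe\in\mbox{bDP}(\cK)$---we conclude that $\cK$ satisfies the weak property $(\pi)$, as desired.

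I do not anticipate a real obstacle here, since all of the technical work---the construction of the neighbourhood $U$ furnished by Proposition~\ref{Prop_pseudoslices_base_LCS_convex_set} together with the slice-scaling argument that upgrades a bounded point of continuity to a bounded denting point---already resides inside the proof of Theorem~\ref{Teo_dentability_ELC_cond_fuertePC_convex}. The single point requiring care is the bookkeeping identification of ``pointed bidual cone'' with ``$\ceroe\in\mbox{ext}(\widetilde{\cK})$'' through Proposition~\ref{Prop_cierre_w*_cono_es_cono_bidual}(i); once this correspondence is in place, the conclusion is immediate.
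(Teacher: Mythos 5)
Your proposal is correct and follows exactly the route the paper intends: the paper states this theorem without proof precisely as ``the version for cones'' of Theorem~\ref{Teo_dentability_ELC_cond_fuertePC_convex}, and your chain---identifying $\widetilde{\cK}=\cK^{**}$ via Proposition~\ref{Prop_cierre_w*_cono_es_cono_bidual}(i), noting that pointedness of the cone $\cK^{**}$ is equivalent to $\ceroe\in\mbox{ext}(\widetilde{\cK})$, applying Theorem~\ref{Teo_dentability_ELC_cond_fuertePC_convex} with $C=\cK$, and invoking the stated equivalence between the weak property $(\pi)$ and $\ceroe\in\mbox{bDP}(\cK)$---is exactly the intended derivation.
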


In \cite{GARCIACASTANO20151178} and \cite{GARCIACASTANO2018} several characterizations of the notion of dentability of cones in the framework of normed spaces were stated. In the following result we state  the generalizations of some of them in the context of locally convex spaces. Let us note that we do not generalize all of them because, either some have been already obtained in \cite{Qiu2001,SONG2003308}, or they do not remain true as equivalences in locally convex spaces. The latter will be seen in the subsequent result. The next theorem does not need to be proved. Equivalence (i) to (v) are consequence of Theorem \ref{Teo_Generalizacion_dentabilidad_convexos_RACSAM} and Proposition \ref{Prop_cierre_w*_cono_es_cono_bidual}. Equivalence (i)$\Leftrightarrow$(vi) can be easily proved.
\begin{theo}\label{Teo_Generalizacion_JMAA_RACSAM_lcs}
Let $E$ be a l.c.s. and $\mathcal{K} \subset E$ a pointed cone. The following are equivalent.
\item[(i)]  $\cK$ satisfies the weak property ($\pi$).
\item[(ii)] $\ceroe \in bPC(\mathcal{K})\cap DP(\mathcal{K})$.
\item[(iii)] $\ceroe \in bPC(\mathcal{K})\cap \sigma(E,E^*)\mbox{-s-ext}(\cK)$.
\item[(iv)] $\ceroe \in bPC(\mathcal{K})$ and  $\mathcal{K}^{**}$ is pointed.
\item[(v)] $\ceroe \in bPC(\mathcal{K})$ and  span$(\mathcal{K}^{*})$ is $\beta(E^*,E)$-dense in $E^*$.
\item[(vi)] There exist $n \geq 1$, $x^{*}_i \in \cK^*$, $i \in \{1,\ldots,n\}$, and $\lambda >0$, such that $\cap_{i=1}^n\{x^{*}_i < \lambda\}\cap \cK$ is bounded.
\end{theo}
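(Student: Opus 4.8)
The plan is to pivot everything through the observation, recorded in the paragraph immediately preceding the statement, that $\cK$ satisfies the weak property ($\pi$) if and only if $\ceroe \in \mbox{bDP}(\cK)$. Thus it suffices to show that each of conditions (ii)--(vi) is equivalent to $\ceroe \in \mbox{bDP}(\cK)$, and almost all of this is bookkeeping over the results already established for convex sets.

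First I would dispose of (i)$\Leftrightarrow$(ii). By Theorem \ref{Teo_dentability_ELC_widetildeC} we have $\mbox{DP}(\cK)\subset \mbox{PC}(\cK)$, and the boundedness clause appearing in Definition \ref{Defi_DentingPoint}(ii) is literally the same requirement as the one in Definition \ref{Defi_PointContinuity_y_boundedPC}(ii). Hence $\ceroe\in\mbox{bDP}(\cK)$ --- namely $\ceroe\in\mbox{DP}(\cK)$ together with a neighbourhood $W\in\sigma(E,E^*)(\ceroe)$ with $W\cap\cK$ bounded --- holds precisely when $\ceroe\in\mbox{DP}(\cK)$ and $\ceroe\in\mbox{bPC}(\cK)$, which is exactly (ii).

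The equivalences (i)$\Leftrightarrow$(iii) and (i)$\Leftrightarrow$(iv) then follow by specialising Theorem \ref{Teo_Generalizacion_dentabilidad_convexos_RACSAM} to the convex set $C=\cK$, which gives
\[
\mbox{bDP}(\cK)=\mbox{bPC}(\cK)\cap \sigma(E,E^*)\mbox{-s-ext}(\cK)=\mbox{bPC}(\cK)\cap \mbox{ext}(\widetilde{\cK}).
\]
The first equality is assertion (iii). For (iv) I would invoke Proposition \ref{Prop_cierre_w*_cono_es_cono_bidual}(i) to replace $\widetilde{\cK}$ by the bidual cone $\cK^{**}$, and then use that a cone is pointed if and only if its vertex is an extreme point, so that $\ceroe\in\mbox{ext}(\cK^{**})$ means precisely that $\cK^{**}$ is pointed. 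Finally (iv)$\Leftrightarrow$(v) is immediate from Proposition \ref{Prop_cierre_w*_cono_es_cono_bidual}(ii).

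The only genuinely new computation is (i)$\Leftrightarrow$(vi). For (i)$\Rightarrow$(vi) it suffices to take $n=1$, $x^{*}_1=x^{*}$ and $\lambda=\alpha$, since $\{x^{*}<\alpha\}\cap\cK\subset\{x^{*}\leq\alpha\}\cap\cK$ is bounded. For the reverse, given $x^{*}_1,\ldots,x^{*}_n\in\cK^*$ and $\lambda>0$ with $\cap_{i=1}^n\{x^{*}_i<\lambda\}\cap\cK$ bounded, I would set $x^{*}:=\sum_{i=1}^n x^{*}_i\in\cK^*$. The key point is that each $x^{*}_i$ is non-negative on $\cK$, so for $k\in\cK$ the inequality $x^{*}(k)<\lambda$ forces $x^{*}_j(k)\leq x^{*}(k)<\lambda$ for every $j$; hence $\{x^{*}<\lambda\}\cap\cK\subset\cap_{i=1}^n\{x^{*}_i<\lambda\}\cap\cK$ is bounded, and choosing any $\alpha\in(0,\lambda)$ shows that $\cK$ enjoys the weak property ($\pi$). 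I do not anticipate a real obstacle: the argument is an assembly of the prior theorems, the sole substantive step being the elementary non-negativity trick in (vi)$\Rightarrow$(i).
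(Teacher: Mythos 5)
Your proposal is correct and follows exactly the route the paper intends: the paper declares the theorem needs no proof because (i)--(v) follow from Theorem \ref{Teo_Generalizacion_dentabilidad_convexos_RACSAM} and Proposition \ref{Prop_cierre_w*_cono_es_cono_bidual} (via the identification of the weak property ($\pi$) with $\ceroe \in \mbox{bDP}(\cK)$), and (i)$\Leftrightarrow$(vi) is ``easily proved.'' You have merely written out the details the authors left implicit, including the elementary non-negativity argument for (vi)$\Rightarrow$(i), so there is nothing to correct.
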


Next, we study some properties which characterize weak property ($\pi$) in normed spaces, see \cite{GARCIACASTANO20151178,GARCIACASTANO2018}, but which do not characterize weak property ($\pi$) in locally convex spaces unless we add an extra assumption. Previously, we introduce some terminology. The order on $E$ given by the cone $\cK$ is defined as $x \leq y \Leftrightarrow y-x \in \cK$ for arbitrary $x$, $y \in E$. In this context it is defined the order interval $[x,y]$ as the set $\{z \in E \colon x \leq z \leq y\}$, and it is said that $k \in \cK$ is an order unit of $\cK$ if $E=\cup_{n \geq 1}[-nk,nk]$. Let us recall that a set in a topological space is said to be rare (or nowhere dense) if its closure has an empty interior. Finally, a l.c.s. $E$ is said to be a Baire-like space if $E$ can not be covered by any increasing sequence of rare, balanced, convex sets. Metrizable barreled vector spaces and Fréchet spaces are examples of Baire-like spaces  (see \cite{Todd1973} for details). 
\begin{theo}\label{Teo_Condi_RACSAM_sufic_elc}
Let $E$ be a l.c.s. and $\mathcal{K} \subset E$ a pointed cone. Consider the following statements.
\begin{itemize}
\item[(i)] $\cK$ satisfies the weak property ($\pi$).
\item[(ii)] $\cK^*$ has an order unit.
\item[(iii)] There exists a sequence $\{x^{*}_n\}_n\subset \cK^*$ such that $E^*=\cup_{n \geq 1}[-nx^{*}_n,nx^{*}_n]$.
\end{itemize}
Then (i)$\Rightarrow$(ii)$\Rightarrow$(iii). Furthermore, if $(E^*,\beta(E^*,E))$ is a Baire-like space, then (iii)$\Rightarrow$(i).
\end{theo}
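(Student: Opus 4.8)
The plan is to treat the three implications separately, reformulating the order-unit conditions through the dual cone $\cK^{*}$ and, in the last step, exploiting the polar duality between $E$ and $(E^{*},\beta(E^{*},E))$.

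\textbf{Implication (i)$\Rightarrow$(ii).} Let $x^{*}\in\cK^{*}$ and $\alpha>0$ be such that $B:=\cK\cap\{x^{*}\leq\alpha\}$ is bounded; I would show that $x^{*}$ itself is an order unit of $\cK^{*}$. Fix $y^{*}\in E^{*}$. Since $B$ is bounded and $y^{*}$ continuous, $M:=\sup_{k\in B}|y^{*}(k)|<\infty$. For $k\in\cK$ with $x^{*}(k)>0$, rescaling to $\frac{\alpha}{x^{*}(k)}k\in B$ yields $|y^{*}(k)|\leq\frac{M}{\alpha}x^{*}(k)$; and if $x^{*}(k)=0$ then the whole ray $\{\lambda k:\lambda\geq 0\}$ lies in $B$, so boundedness in the Hausdorff space $E$ forces $k=\ceroe$ and hence $y^{*}(k)=0$. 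Thus $|y^{*}(k)|\leq n\,x^{*}(k)$ for all $k\in\cK$, for any integer $n\geq M/\alpha$, which is exactly $nx^{*}\pm y^{*}\in\cK^{*}$, i.e. $y^{*}\in[-nx^{*},nx^{*}]$. As $y^{*}$ was arbitrary, $E^{*}=\cup_{n\geq 1}[-nx^{*},nx^{*}]$. The implication (ii)$\Rightarrow$(iii) is then immediate on taking the constant sequence $x^{*}_{n}:=k^{*}$.

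\textbf{Implication (iii)$\Rightarrow$(i).} Put $s_{n}:=\sum_{i=1}^{n}x^{*}_{i}\in\cK^{*}$ and $G_{n}:=[-n s_{n},\,n s_{n}]$. First I would record the structure of these sets: each $G_{n}$ is convex and balanced (since $\cK^{*}$ is a convex cone and $ns_{n}\in\cK^{*}$), and each is closed in $\beta(E^{*},E)$ because $\cK^{*}=\cap_{k\in\cK}\{z^{*}:z^{*}(k)\geq 0\}$ is $\sigma(E^{*},E)$-closed, hence $\beta(E^{*},E)$-closed. A short order computation, namely $(n{+}1)s_{n+1}-ns_{n}=s_{n}+(n{+}1)x^{*}_{n+1}\in\cK^{*}$, shows $(G_{n})_{n}$ is increasing, while $[-n x^{*}_{n}, n x^{*}_{n}]\subset G_{n}$ together with (iii) gives $E^{*}=\cup_{n\geq 1}G_{n}$. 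Because $(E^{*},\beta(E^{*},E))$ is Baire-like, this increasing cover by balanced convex sets cannot consist entirely of rare sets; hence some $G_{N}$ has nonempty $\beta(E^{*},E)$-interior, and being closed, balanced and convex it is a $\beta(E^{*},E)$-neighbourhood of $\ceroe$. Rescaling, $[-s_{N},s_{N}]$ is such a neighbourhood as well.

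It remains to convert this into the weak property $(\pi)$ for $x^{*}:=s_{N}\in\cK^{*}$. Writing $[-x^{*},x^{*}]=\{z^{*}\in E^{*}:|z^{*}(k)|\leq x^{*}(k)\ \forall k\in\cK\}$, I would verify the inclusion $\cK\cap\{x^{*}\leq 1\}\subset([-x^{*},x^{*}])^{\circ}$ (polar taken in $E$): if $k\in\cK$ and $0\leq x^{*}(k)\leq 1$, then $|z^{*}(k)|\leq x^{*}(k)\leq 1$ for every $z^{*}\in[-x^{*},x^{*}]$. The decisive point is that $([-x^{*},x^{*}])^{\circ}$ is bounded in $E$: since $[-x^{*},x^{*}]$ is a $\beta(E^{*},E)$-neighbourhood of $\ceroe$, it contains a basic neighbourhood $B^{\circ}$ for some bounded $B\subset E$, whence $([-x^{*},x^{*}])^{\circ}\subset B^{\circ\circ}$, the $\sigma(E,E^{*})$-closed absolutely convex hull of $B$, which is bounded. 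Consequently $\cK\cap\{x^{*}\leq 1\}$ is bounded, giving the weak property $(\pi)$. I expect this final boundedness argument via polars to be the only delicate step; the rest is bookkeeping with the order structure of $\cK^{*}$ and the Baire-like category argument.
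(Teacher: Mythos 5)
Your proposal is correct, and it follows the paper's skeleton while replacing its key external ingredient with self-contained arguments. The paper's proof routes both nontrivial implications through Theorem 2.1 of Qiu \cite{Qiu2001} (the equivalence between the weak property ($\pi$) and solidness of $\mathcal{K}^*$ in $(E^*,\beta(E^*,E))$): for (i)$\Rightarrow$(ii) it invokes Qiu to obtain a $\beta(E^*,E)$-interior point $x^*$ of $\mathcal{K}^*$ and then checks that any such interior point is an order unit; for (iii)$\Rightarrow$(i) it runs exactly your category argument (summing to make the intervals increasing, then using the Baire-like hypothesis on the closed, convex, balanced intervals) to conclude that some $x^*_{n_0}$ is interior to $\mathcal{K}^*$, and cites Qiu again to return to property ($\pi$). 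You instead prove both bridges directly: in (i)$\Rightarrow$(ii) you show the very functional $x^*$ witnessing property ($\pi$) is an order unit, by rescaling points of $\mathcal{K}$ into the bounded slice (with the correct treatment of the degenerate case $x^*(k)=0$ via Hausdorffness); and in (iii)$\Rightarrow$(i) you convert ``$[-s_N,s_N]$ is a $\beta(E^*,E)$-neighbourhood of $\ceroe$'' into boundedness of $\mathcal{K}\cap\{s_N\leq 1\}$ through the inclusion $\mathcal{K}\cap\{s_N\leq 1\}\subset([-s_N,s_N])^{\circ}\subset B^{\circ\circ}$ and the bipolar theorem, which is precisely the direction of Qiu's theorem the paper needs but does not reprove. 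The trade-off: the paper's proof is shorter by delegation, while yours is self-contained, uses only standard duality (polars, bipolar theorem, Mackey boundedness), and yields the slightly sharper information that the property-($\pi$) functional itself serves as the order unit. All the supporting details you flag (closedness of the order intervals in $\beta(E^*,E)$ via $\sigma(E^*,E)$-closedness of $\mathcal{K}^*$, balancedness, monotonicity of $G_n$, and the passage from nonempty interior of a balanced convex set to a neighbourhood of $\ceroe$) check out.
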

\begin{proof}
(i)$\Rightarrow$(ii) By \cite[Theorem 2.1]{Qiu2001} there exits some $x^{*}$ in the interior  of $\cK^*$ under the $\beta(E^*,E)$ topology. Thus, there exists some balanced $V \in \beta(E^*,E)(\ceroe)$ such that $x^{*}+V \subset \cK^*$. We will show that $x^{*}$ is an order unit. Indeed, given any arbitrary $y^{*} \in E^*$, there exists $\lambda>0$ such that $y^{*} \in \lambda V=\lambda (-V)\subset \lambda x^{*}-\lambda \cK^*$. This gives $y^{*} \leq \lambda x^{*}$.

(ii)$\Rightarrow$(iii) There is nothing to prove.

(iii)$\Rightarrow$(i) It is not restrictive to assume that the union $\cup_{n \geq 1}[-nx^{*}_n,nx^{*}_n]$ is increasing, otherwise we can define $y^{*}_n :=\sum_{i=1}^nx^{*}_i$ and clearly we have $E^*=\cup_{n \geq 1}[-ny^{*}_n,ny^{*}_n]$. Since each interval $[-nx^{*}_n,nx^{*}_n]$ is convex and balanced, the assumption $E^*$ is a Baire-like space implies that there exist $n_0$ and a balanced $V \in \beta(E^*,E)(\ceroe)$ such that $V \subset [-n_0x^{*}_{n_0},n_0x^{*}_{n_0}]=(-n_0x^{*}_{n_0}+\cK^*)\cap (n_0x^{*}_{n_0}-\cK^*)$. Hence $x^{*}_{n_0}+\frac{1}{n_0} V \subset \cK^*$, which yields that $x^{*}_{n_0}$ belongs to the interior of $\cK^*$, and again \cite[Theorem 2.1]{Qiu2001} applies.
\end{proof}
If $E$ is a normed space, the topology $\beta(E^*,E)$ on $E^*$ coincides with the topology generated by the dual norm. In addition, the dual of a normed space is a Banach space, and so it is a Baire-like space.  Therefore, the former result applied to normed spaces provides us the equivalence (i)$\Leftrightarrow$(iv)$\Leftrightarrow$(v) in \cite[Theorem 2]{GARCIACASTANO2018}.

Normality of a cone may be characterized in terms of the behaviour of its dual. In particular, \cite[Theorem 2.26]{ali-tour} states that a cone in a l.c.s. is normal if and only if the span of the dual cone is the whole dual space. Next, we provide a new characterization of normality which relaxes the former assumption on the dual cone. This is consequence of Theorem \ref{Teo_Generalizacion_JMAA_RACSAM_lcs} and \cite[Theorem 1 (iv)]{SONG2003308}.
\begin{coro}\label{Coro_caract_conos_normales}
Let $E$ be a l.c.s. and $\mathcal{K} \subset E$ a cone such that $\ceroe \in bPC(\mathcal{K})$. Then $\cK$ is normal if and only if span$(\mathcal{K}^{*})$ is $\beta(E^*,E)$-dense in $E^*$.
\end{coro}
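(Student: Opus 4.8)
The plan is to route both implications through the weak property ($\pi$), using Theorem~\ref{Teo_Generalizacion_JMAA_RACSAM_lcs} to exchange the density condition for the weak property ($\pi$), and \cite[Theorem~1~(iv)]{SONG2003308} to exchange the weak property ($\pi$) for normality. The standing hypothesis $\ceroe \in \mbox{bPC}(\cK)$ is precisely what activates Theorem~\ref{Teo_Generalizacion_JMAA_RACSAM_lcs}: once $\cK$ is known to be pointed, its equivalence (i)$\Leftrightarrow$(v) reads, because $\ceroe \in \mbox{bPC}(\cK)$ is already assumed, as the statement that $\cK$ satisfies the weak property ($\pi$) if and only if $\mbox{span}(\cK^*)$ is $\beta(E^*,E)$-dense in $E^*$. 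Combined with the normality--weak property ($\pi$) link supplied by \cite[Theorem~1~(iv)]{SONG2003308}, the whole corollary would then chain as: $\cK$ normal $\Leftrightarrow$ $\cK$ satisfies the weak property ($\pi$) $\Leftrightarrow$ $\mbox{span}(\cK^*)$ is $\beta(E^*,E)$-dense.

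Before invoking Theorem~\ref{Teo_Generalizacion_JMAA_RACSAM_lcs}, however, I must secure pointedness of $\cK$, which that theorem requires but which the corollary does not postulate. This is the bookkeeping step: if $\mbox{span}(\cK^*)$ is $\beta(E^*,E)$-dense, then Proposition~\ref{Prop_cierre_w*_cono_es_cono_bidual}(ii) makes $\cK^{**}$ pointed, and since $J_E(\cK)\subset \widetilde{\cK}=\cK^{**}$ by Proposition~\ref{Prop_cierre_w*_cono_es_cono_bidual}(i), any element of $\cK\cap(-\cK)$ maps into $\cK^{**}\cap(-\cK^{**})$, so the pointedness descends to $\cK$. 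The forward implication ``$\cK$ normal $\Rightarrow$ $\mbox{span}(\cK^*)$ dense'' I would then dispatch directly and cheaply: by the classical characterization of normality \cite[Theorem~2.26]{ali-tour}, normality yields $\mbox{span}(\cK^*)=E^*$, which is in particular $\beta(E^*,E)$-dense in $E^*$; this direction needs neither the bounded point of continuity hypothesis nor Theorem~\ref{Teo_Generalizacion_JMAA_RACSAM_lcs}.

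The substantive direction is the converse. With $\cK$ promoted to a pointed cone as above and $\ceroe \in \mbox{bPC}(\cK)$ in hand, the implication (v)$\Rightarrow$(i) of Theorem~\ref{Teo_Generalizacion_JMAA_RACSAM_lcs} upgrades density of $\mbox{span}(\cK^*)$ to the weak property ($\pi$), and then \cite[Theorem~1~(iv)]{SONG2003308} turns the weak property ($\pi$) into normality of $\cK$, closing the equivalence.

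The step I expect to be the main obstacle is exactly this upgrade: passing from density of $\mbox{span}(\cK^*)$, which is \emph{strictly} weaker than $\mbox{span}(\cK^*)=E^*$ and so cannot be fed back into \cite[Theorem~2.26]{ali-tour} in reverse, up to the weak property ($\pi$). The bounded point of continuity hypothesis is precisely the extra ingredient that closes this gap through Theorem~\ref{Teo_Generalizacion_JMAA_RACSAM_lcs}; that the passage genuinely requires $\ceroe \in \mbox{bPC}(\cK)$ is already visible in the positive cone of $\ell^2$, where $\mbox{span}(\cK^*)$ is dense (indeed all of the dual) yet the weak property ($\pi$) fails precisely because $\ceroe \notin \mbox{bPC}(\cK)$.
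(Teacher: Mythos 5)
Your proof is correct and takes essentially the same route as the paper, whose own proof is exactly the chain you describe: Theorem~\ref{Teo_Generalizacion_JMAA_RACSAM_lcs} (the equivalence (i)$\Leftrightarrow$(v) under the bPC hypothesis) to trade density of span$(\mathcal{K}^{*})$ for the weak property ($\pi$), combined with \cite[Theorem~1~(iv)]{SONG2003308} to link the weak property ($\pi$) with normality. Your extra steps --- deducing pointedness of $\cK$ from the density hypothesis via Proposition~\ref{Prop_cierre_w*_cono_es_cono_bidual}, and dispatching the forward direction through \cite[Theorem~2.26]{ali-tour} --- merely fill in bookkeeping that the paper leaves implicit, not a different argument.
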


Let us recall that $\widehat{\mathcal{K}}$ denotes the closure of $J_{E}(\cK)$ (the image of $\cK$ under the map $J_{E}:E \rightarrow E^{**}$) in $E^{**}$ under the topology $\beta(E^{**},E^*)$. Next, we state the version for cones of Theorem \ref{Teo_dentability_ELC_C_convex}.

\begin{theo}\label{Teo_dentability_ELC_K_tilde}
Let $E$ be a l.c.s. and $\mathcal{K} \subset E$ a cone. If $\ceroe \in DP(\mathcal{K})$, then $\ceroe\in PC(\mathcal{K})$ and $\widehat{\mathcal{K}}$ is pointed.
\end{theo}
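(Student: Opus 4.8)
The plan is to obtain this statement as a direct specialization of Theorem~\ref{Teo_dentability_ELC_C_convex} to the convex set $C=\mathcal{K}$, the only genuine work being to reinterpret the extremality of $\ceroe$ as pointedness of $\widehat{\mathcal{K}}$. First I would apply Theorem~\ref{Teo_dentability_ELC_C_convex} with $C=\mathcal{K}$: since by hypothesis $\ceroe\in DP(\mathcal{K})$, that theorem gives at once $\ceroe\in PC(\mathcal{K})$ and $\ceroe\in\mathrm{ext}(\widehat{\mathcal{K}})$. The first of these is already one half of the desired conclusion, so it remains only to pass from the extremality of the vertex to the pointedness of $\widehat{\mathcal{K}}$.

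The second step is to verify that $\widehat{\mathcal{K}}$ is a convex cone in $E^{**}$, which is what allows the transition. Because $J_{E}$ is linear, $J_{E}(\mathcal{K})$ is a convex cone in $E^{**}$; taking its $\beta(E^{**},E^*)$-closure preserves both properties, since in a l.c.s.\ the closure of a convex set is convex and, for each $\lambda\geq 0$, one has $\lambda\,\overline{J_{E}(\mathcal{K})}=\overline{\lambda J_{E}(\mathcal{K})}\subset\overline{J_{E}(\mathcal{K})}=\widehat{\mathcal{K}}$, the case $\lambda=0$ using $\ceroe\in\widehat{\mathcal{K}}$. Hence $\widehat{\mathcal{K}}$ is a convex cone, and the equivalence recorded just after the definition of a cone (namely, a cone is pointed if and only if its vertex is an extreme point) becomes available.

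Finally, combining $\ceroe\in\mathrm{ext}(\widehat{\mathcal{K}})$ from the first step with the fact that $\widehat{\mathcal{K}}$ is a cone from the second step, that equivalence yields immediately that $\widehat{\mathcal{K}}$ is pointed, which completes the argument. I do not expect any real obstacle here: the result is essentially a corollary of Theorem~\ref{Teo_dentability_ELC_C_convex}, and the only point meriting explicit mention is that $\widehat{\mathcal{K}}$ must first be confirmed to be a cone before the ``pointed $\Leftrightarrow$ extreme vertex'' dictionary can be invoked, in full parallel with how Theorem~\ref{Teo_dentability_ELC_K_bidual} is deduced from Theorem~\ref{Teo_dentability_ELC_widetildeC} via Proposition~\ref{Prop_cierre_w*_cono_es_cono_bidual}.
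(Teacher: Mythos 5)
Your proposal is correct and takes essentially the same route as the paper: the paper states this theorem without a separate proof, presenting it precisely as the specialization $C=\mathcal{K}$ of Theorem~\ref{Teo_dentability_ELC_C_convex}, combined with the equivalence (recorded at the start of Section~\ref{Section_cones_LCS}) that a cone is pointed exactly when its vertex is an extreme point. Your only addition is the routine check that $\widehat{\mathcal{K}}$ is itself a convex cone in $E^{**}$, a detail the paper leaves tacit but which is needed, and correctly verified, for that dictionary to apply.
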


The following theorem is the version for cones of Theorem \ref{Teo_dentabilidad_convexos_nuevo}. This provides a new characterization of weak property ($\pi$) for infrabarreled spaces. This characterization is not a generalization of any known characterization of dentability in normed spaces. 
\begin{theo}\label{Teo_weak_property_completitud_pointed_lcs}
Let $E$ be an infrabarreled l.c.s. and $\mathcal{K} \subset E$ a pointed cone. The following are equivalent.
\item[(i)]  $\cK$ satisfies the weak property ($\pi$).
\item[(ii)] $\ceroe \in bPC(\mathcal{K})$ and  $\widehat{\mathcal{K}}$ is pointed.
\end{theo}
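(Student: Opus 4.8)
The plan is to establish the equivalence (i)$\Leftrightarrow$(ii) by chaining together the results already developed in the paper, treating the weak property ($\pi$) through its reformulation $\ceroe \in \mbox{bDP}(\cK)$. For the implication (i)$\Rightarrow$(ii), I would first recall that, as noted in the text, $\cK$ satisfies the weak property ($\pi$) if and only if $\ceroe \in \mbox{bDP}(\cK)$. Since $\mbox{bDP}(\cK)\subset \mbox{DP}(\cK)$, I can apply Theorem~\ref{Teo_dentability_ELC_K_tilde} directly to conclude that $\ceroe \in \mbox{PC}(\cK)$ and $\widehat{\cK}$ is pointed. The only remaining point is to upgrade $\ceroe \in \mbox{PC}(\cK)$ to $\ceroe \in \mbox{bPC}(\cK)$; but this is immediate because $\ceroe \in \mbox{bDP}(\cK)$ already furnishes a $W\in \sigma(E,E^*)(\ceroe)$ with $W\cap \cK$ bounded, which is exactly the extra condition in Definition~\ref{Defi_PointContinuity_y_boundedPC}~(ii).

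For the converse (ii)$\Rightarrow$(i), which is the substantive direction, I would assemble the machinery specialized to cones. Assume $\ceroe \in \mbox{bPC}(\cK)$ and $\widehat{\cK}$ is pointed. Since $\widehat{\cK}=\mbox{ext}$-pointed means $\ceroe \in \mbox{ext}(\widehat{\cK})$ (by the definition of pointedness for cones), and since $\ceroe \in \mbox{PC}(\cK)$, I can invoke Proposition~\ref{Prop_C_tilde_pointed_C_bidual_pointed_convex}, which requires $E$ to be infrabarreled, to deduce that $\ceroe \in \mbox{ext}(\widetilde{\cK})$. By Proposition~\ref{Prop_cierre_w*_cono_es_cono_bidual}~(i) we have $\widetilde{\cK}=\cK^{**}$, so $\ceroe \in \mbox{ext}(\cK^{**})$ means precisely that $\cK^{**}$ is pointed. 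Now I am in a position to apply Theorem~\ref{Teo_dentability_ELC_cond_fuertePC}, the version for cones, which states that if $\ceroe \in \mbox{bPC}(\cK)$ and $\cK^{**}$ is pointed, then $\cK$ satisfies the weak property ($\pi$). This closes the loop.

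Essentially, this theorem is the cone specialization of the convex-set result Theorem~\ref{Teo_dentabilidad_convexos_nuevo}, so an alternative and perhaps cleaner route is to apply that theorem verbatim: Theorem~\ref{Teo_dentabilidad_convexos_nuevo} gives $\mbox{bDP}(\cK)=\mbox{bPC}(\cK)\cap \mbox{ext}(\widehat{\cK})$, and then one only translates the two sides using the cone dictionary, namely that $\ceroe \in \mbox{bDP}(\cK)$ is equivalent to the weak property ($\pi$) and that $\ceroe \in \mbox{ext}(\widehat{\cK})$ is equivalent to $\widehat{\cK}$ being pointed. Both translations are already recorded in the text (the first immediately after the definition of weak property ($\pi$), the second in the definition of pointedness).

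The step I expect to require the most care is the passage from pointedness of $\widehat{\cK}$ to pointedness of $\cK^{**}=\widetilde{\cK}$, since $\widehat{\cK}\subset \widetilde{\cK}$ is in general a \emph{proper} inclusion and pointedness of the smaller set does not automatically transfer to the larger one. This is exactly where the infrabarreledness hypothesis and the point-of-continuity assumption are indispensable, and it is the content of Proposition~\ref{Prop_C_tilde_pointed_C_bidual_pointed_convex}; everything else is a routine invocation of the already-established cone versions of the convex-set theorems together with the bookkeeping of centering the cone at its vertex $\ceroe$.
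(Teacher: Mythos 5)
Your proposal is correct and follows essentially the same route as the paper: the paper offers no separate argument for this theorem, presenting it simply as ``the version for cones of Theorem~\ref{Teo_dentabilidad_convexos_nuevo}'', which is exactly your alternative route of applying that theorem to $C=\cK$ and translating via $\ceroe \in \mbox{bDP}(\cK) \Leftrightarrow$ weak property ($\pi$) and $\ceroe \in \mbox{ext}(\widehat{\cK}) \Leftrightarrow \widehat{\cK}$ pointed. Your primary chain (Theorem~\ref{Teo_dentability_ELC_K_tilde} for one direction; Proposition~\ref{Prop_C_tilde_pointed_C_bidual_pointed_convex}, Proposition~\ref{Prop_cierre_w*_cono_es_cono_bidual}, and Theorem~\ref{Teo_dentability_ELC_cond_fuertePC} for the other) just unpacks the same machinery in its cone-specialized form, so it is the same proof in substance.
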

%

In what follows, we will state some other new results characterizing notions linked to weak property ($\pi$). Let us note that in Definition \ref{Defi_PointContinuity_y_boundedPC} (ii), we introduced a new definition of point of continuity stronger than the original notion of point of continuity (in Definition \ref{Defi_PointContinuity_y_boundedPC} (i)). Several kinds of strengthening of the notion of point of continuity are also found  in some results in \cite{Qiu2001}, they allow the author to characterize the solidness of the dual cone under different topologies on the dual $E^*$. Next, we add new equivalences to \cite[Theorem 2.2]{Qiu2001}  which characterize the strong angle property. We do that after introducing a new strengthening of the notion of point of continuity. So, we need to establish more terminology. Given a l.c.s. $E$, we will consider the strong topology on it, denoted by $\beta(E,E^*)$, which is defined as the topology on $E$ of uniform convergence on all the $\beta(E^{*},E)$ bounded subsets of $E^*$. A base of neighbourhoods of $\ceroe$ for the topology $\beta(E,E^*)$ is given by the barrels in $E$, i.e., by the family of closed, convex, balanced and absorbent subsets of $E$. It is worth pointing out that the strong topology is the finest one on $E$ which can be defined in terms of the dual pair $<E,E^*>$. We refer the reader to \cite{Kothe1983} for more details. 

\begin{defin}\label{Def_strong_bounded_point_continuity}
Let $E$ be a l.c.s. and $\cK \subset E$ a cone. We say that $\ceroe$ is a $\beta(E, E^*)$ bounded point of continuity of $\cK$, written $\ceroe \in \beta(E, E^*)\mbox{-bPC}(\mathcal{K})$, if there exists $W \in \sigma(E, E^*)(\ceroe)$ such that $\cK \cap W$ is $\beta(E, E^*)$-bounded. 
\end{defin}
Clearly, $\beta(E, E^*)\mbox{-bPC}(\mathcal{K})\subset $ bPC$(\mathcal{K})$. Next, our characterization for the strong angle property, the definition of which corresponds to statement (i).

\begin{theo}\label{Teo_Caract_strong_angle_property}
Let $E$ be an infrabarreled l.c.s. and $\mathcal{K} \subset E$ a pointed cone. The following are equivalent.
\item[(i)] There exists $x^{*} \in \cK^*$ and $\lambda \in \R$ such that $\{x^{*}\leq \lambda\}\cap \cK$ is $\beta(E, E^*)$-bounded.
\item[(ii)] $\ceroe \in \beta(E, E^*)\mbox{-b}PC(\mathcal{K})$ and  $\widehat{\mathcal{K}}$ is pointed.
\item[(iii)] $\ceroe \in \beta(E, E^*)\mbox{-b}PC(\mathcal{K})\cap \sigma(E,E^*)\mbox{-s-ext}(\cK)$.
\item[(iv)] $\ceroe \in \beta(E, E^*)\mbox{-b}PC(\mathcal{K})$ and  $\mathcal{K}^{**}$ is pointed.
\item[(v)] $\ceroe \in \beta(E, E^*)\mbox{-b}PC(\mathcal{K})$ and  span$(\mathcal{K}^{*})$ is $\beta(E^*,E)$-dense in $E^*$.
\item[(vi)] There exist $n \geq 1$, $x^{*}_i \in \cK^*$, $i \in \{1,\ldots,n\}$, and $\lambda >0$, such that $\cap_{i=1}^n\{x^{*}_i < \lambda\}\cap \cK$ is $\beta(E, E^*)$-bounded.
\end{theo}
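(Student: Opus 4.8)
The plan is to prove this theorem exactly as the "strong" analogue of Theorem~\ref{Teo_Generalizacion_JMAA_RACSAM_lcs}, by replaying that circle of equivalences with the topology $\beta(E,E^*)$ in place of $\tau$ wherever boundedness is measured, while keeping the weak topology $\sigma(E,E^*)$ unchanged in all the slice and extremality arguments. The guiding observation is that statements (i) and (vi) are the $\beta(E,E^*)$-bounded versions of the weak property $(\pi)$ and its finite-intersection reformulation, and the definition $\beta(E,E^*)\mbox{-bPC}(\mathcal{K})$ was introduced precisely so that "$\ceroe\in\beta(E,E^*)\mbox{-bPC}(\mathcal{K})$" replaces "$\ceroe\in\mbox{bPC}(\mathcal{K})$" everywhere. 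Thus I expect (i)$\Leftrightarrow$(vi) to be immediate from the cone structure (if $\{x^*\le\lambda\}\cap\mathcal{K}$ works then so does the single half-space, and conversely any finite intersection of $\{x^*_i<\lambda\}$ contains such a slice after passing to $\sum_i x^*_i\in\mathcal{K}^*$), and (ii)$\Leftrightarrow$(iv)$\Leftrightarrow$(v) to follow from Proposition~\ref{Prop_cierre_w*_cono_es_cono_bidual} together with Proposition~\ref{Prop_C_tilde_pointed_C_bidual_pointed_convex}, exactly as in the proof of Theorem~\ref{Teo_weak_property_completitud_pointed_lcs} for the infrabarreled case.

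The substantive work is the two implications (i)$\Rightarrow$(iii) and (iii)$\Rightarrow$(i) (or equivalently (iv)$\Rightarrow$(i)), which are the $\beta(E,E^*)$-strengthenings of Theorems~\ref{Teo_dentability_ELC_K_bidual} and \ref{Teo_dentability_ELC_cond_fuertePC}. For (i)$\Rightarrow$(iii): if $\{x^*\le\lambda\}\cap\mathcal{K}$ is $\beta(E,E^*)$-bounded then taking $W:=\{x^*<\lambda\}\in\sigma(E,E^*)(\ceroe)$ gives $\ceroe\in\beta(E,E^*)\mbox{-bPC}(\mathcal{K})$ directly, and the $\sigma(E,E^*)\mbox{-s-ext}$ part follows from Proposition~\ref{Prop_denting_implies_strongly_extreme_convex} and Proposition~\ref{Prop_weakly_strongly_extreme_implies_Convex_tilde_extreme} once I note that the weak property $(\pi)$ gives $\ceroe\in\mbox{DP}(\mathcal{K})\subset\sigma(E,E^*)\mbox{-s-ext}(\mathcal{K})$ — here I simply reuse the classical bounded-denting argument since $\beta(E,E^*)$-boundedness implies $\sigma(E,E^*)$-boundedness. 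For the reverse direction I would feed (iv) into the version for cones of Proposition~\ref{Prop_pseudoslices_base_LCS_convex_set} (namely Proposition~\ref{Prop_pseudoslices_base_LCS}), whose hypothesis is $\ceroe\in\mbox{bPC}(\mathcal{K})$ and $\mathcal{K}^{**}$ pointed, and then run the dilation argument of Theorem~\ref{Teo_dentability_ELC_cond_fuertePC_convex} to extract a genuine half-space slice $\{x^*<\lambda\}\cap\mathcal{K}\subset\tfrac12 U$.

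The main obstacle, and the one point where the proof genuinely differs from its weak counterpart, is promoting the $\tau$-boundedness delivered by Proposition~\ref{Prop_pseudoslices_base_LCS} to the stronger $\beta(E,E^*)$-boundedness demanded by (i) and (vi). Proposition~\ref{Prop_pseudoslices_base_LCS} only guarantees that the slice sits inside $\tfrac12 U$ for some $U\in\tau(\ceroe)$, which bounds it for $\tau$ but not a priori for the strong topology. The key is to observe that the hypothesis $\ceroe\in\beta(E,E^*)\mbox{-bPC}(\mathcal{K})$ furnishes a weak neighbourhood $W_0$ with $\mathcal{K}\cap W_0$ already $\beta(E,E^*)$-bounded, so once the half-space slice $\{x^*<\lambda\}\cap\mathcal{K}$ is shrunk to lie inside $\mathcal{K}\cap W_0$ (by intersecting with a small enough multiple and using that $\ceroe$ lies in the relative weak interior given by the slice base), its $\beta(E,E^*)$-boundedness is inherited. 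This is where infrabarreledness is used only indirectly, through Propositions~\ref{Prop_PC_equivale_weak_star_PC_convex} and \ref{Prop_C_tilde_pointed_C_bidual_pointed_convex} needed to equate the $\widehat{\mathcal{K}}$-pointedness in (ii) with $\mathcal{K}^{**}$-pointedness in (iv). I would therefore organize the proof as the cycle (i)$\Rightarrow$(iii)$\Rightarrow$(iv)$\Rightarrow$(i), close the loop (iv)$\Leftrightarrow$(ii)$\Leftrightarrow$(v) via the two cited propositions and Proposition~\ref{Prop_cierre_w*_cono_es_cono_bidual}, and dispatch (i)$\Leftrightarrow$(vi) separately by the elementary cone reduction, flagging the boundedness-upgrade step as the only place requiring care.
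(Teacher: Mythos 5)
Your proposal is correct and is essentially the paper's own proof: the paper likewise proves the hard implication by converting pointedness of $\widehat{\cK}$ into pointedness of $\cK^{**}$ via Propositions~\ref{Prop_C_tilde_pointed_C_bidual_pointed_convex} and \ref{Prop_cierre_w*_cono_es_cono_bidual} (the only place infrabarreledness enters), then running the slice-base argument of Proposition~\ref{Prop_pseudoslices_base_LCS_convex_set} and the dilation argument of Theorem~\ref{Teo_dentability_ELC_cond_fuertePC_convex}, and it settles the remaining equivalences by Theorem~\ref{Teo_Generalizacion_JMAA_RACSAM_lcs}. The only difference is bookkeeping in your flagged boundedness-upgrade step: the paper fixes at the outset a closed $U\in\tau(\ceroe)$ with $\cK\cap U\subset\cK\cap W$, where $W$ is the $\beta(E,E^*)$-bPC witness, so the dilation argument directly yields $\{x^{*}\leq\lambda\}\cap\cK\subset\cK\cap U$, a subset of a $\beta(E,E^*)$-bounded set, whereas you rescale the slice a posteriori into $\cK\cap W_{0}$ using absorption and the cone identity $t\bigl(\{x^{*}<\lambda\}\cap\cK\bigr)=\{x^{*}<t\lambda\}\cap\cK$ --- both are valid.
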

\begin{proof}
(i)$\Rightarrow$(ii) Clearly $\ceroe \in \beta(E, E^*)\mbox{-bPC}(\mathcal{K})$. On the other hand, since every $\beta(E, E^*)$-bounded set is bounded regarding the initial topology on $E$, it follows that $\cK$ satisfies the weak property ($\pi$). Now by Theorem \ref{Teo_weak_property_completitud_pointed_lcs}, $\widehat{\mathcal{K}}$ is pointed.

(ii)$\Rightarrow$(i) By Propositions \ref{Prop_C_tilde_pointed_C_bidual_pointed_convex} and \ref{Prop_cierre_w*_cono_es_cono_bidual}, $\cK^{**}$ is pointed. On the other hand, we consider $W \in \sigma(E, E^*)(\ceroe)$ such that $\cK\cap W$ is $\beta(E, E^*)$-bounded. Then there exists $U \in \tau(\ceroe)$ closed such that $\cK\cap U \subset \cK \cap W$. Thus $\cK\cap U $ is $\beta(E, E^*)$-bounded (and so bounded). Now we argue as in proof of Proposition~\ref{Prop_pseudoslices_base_LCS_convex_set}~(ii) and claim that the family of open slices containing $\ceroe$ forms a neighbourhood base for $\ceroe$ relative to $(\cK\cap U,\mbox{weak})$. Now, arguing as in the proof of Theorem \ref{Teo_dentability_ELC_cond_fuertePC_convex}, there exists $x^{*} \in \cK^*$ and $\lambda \in \R$ such that $\{x^{*}\leq \lambda\}\cap \cK \subset \cK\cap U$. Thus, $\{x^{*}\leq \lambda\}\cap \cK$ is $\beta(E, E^*)$-bounded.

(ii)$\Leftrightarrow$(iii)$\Leftrightarrow$(iv)$\Leftrightarrow$(v)$\Leftrightarrow$(vi) Consequence of Theorem \ref{Teo_Generalizacion_JMAA_RACSAM_lcs}.
\end{proof}

In the next result we will use that the topology of infrabarreled spaces coincides with the topology of uniform convergence on strongly bounded sets of $E^*$, i.e., $E$ is infrabarreled if and only if the initial topology, $\tau$, on $E$ verifies $\tau=\beta^*(E, E^*)$. This property allows us to state easily the dual version of Theorem \ref{Teo_Caract_strong_angle_property}. Next, we introduce the dual notion of bounded point of continuity.

\begin{defin}\label{Def_dual_strong_bounded_point_continuity}
Let $E$ be a l.c.s., $\cK \subset E$ a cone, and $\tau^*$ a topology on $E^*$. We say that $\ceroe$ is a $\tau^*$ bounded point of continuity of $\cK^*$, written $\ceroe \in \tau^*\mbox{-b}PC(\mathcal{K^*})$, if there exists $W \in \sigma(E^*, E)(\ceroe)$ such that $\cK^* \cap W$ is $\tau^*$-bounded. 
\end{defin}

\begin{theo}\label{Teo_Caract_strong_angle_property_dual}
Let $E$ be an infrabarreled l.c.s. and $\mathcal{K} \subset E$ a closed pointed cone. The following are equivalent.
\item[(i)] There exists $k \in \cK$ and $\lambda \in \R$ such that $\{k\leq \lambda\}\cap \cK^*$ is $\beta(E^*, E)$-bounded.
\item[(ii)] $\ceroe \in \beta(E^*, E)\mbox{-b}PC(\mathcal{K^*})\cap\sigma(E^*, E)\mbox{-s-ext}(\cK^*)$.
\item[(iii)] $\ceroe \in \beta(E^*, E)\mbox{-b}PC(\mathcal{K^*})$ and  span$(\mathcal{K})$ is dense in $E$.
\item[(iv)] There exist $n \geq 1$, $k_i \in \cK$, $i \in \{1,\ldots,n\}$, and $\lambda >0$, such that $\cap_{i=1}^n\{k_i < \lambda\}\cap \cK^*$ is $\beta(E^*, E)$-bounded.
\end{theo}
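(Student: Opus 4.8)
The plan is to read the statement as the exact dual of Theorem~\ref{Teo_Caract_strong_angle_property}, obtained by interchanging the roles of $E$ and $E^*$: I would work in the dual pair $\langle E,E^*\rangle$ regarding $\cK^*\subset E^*$ as the cone and each $k\in\cK\subset E$ as a weak-* continuous functional on $E^*$, so that $\{k\le\lambda\}$ denotes the half-space $\{x^*\in E^*\colon x^*(k)\le\lambda\}$. Under this reading the dictionary with Theorem~\ref{Teo_Caract_strong_angle_property} is (i)$\leftrightarrow$(i), (ii)$\leftrightarrow$(iii), (iii)$\leftrightarrow$(v) and (iv)$\leftrightarrow$(vi); the two ``bidual'' conditions of the primal theorem (those phrased through $\widehat{\cK}$ and $\cK^{**}$) are absent because, as explained below, the relevant compactness will now take place inside $E^*$ itself.

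First I would record two reductions. Since $\cK$ is closed and convex, the bipolar theorem for cones (a separation argument identical to the one in Proposition~\ref{Prop_cierre_w*_cono_es_cono_bidual}) gives $\{x\in E\colon x^*(x)\ge 0,\ \forall x^*\in\cK^*\}=\cK$; hence $\cK^*$ is pointed if and only if $\mbox{span}(\cK)$ is dense in $E$. Next, each of (i)--(iv) already forces $\cK^*$ to be pointed: if $x^*\in\cK^*\cap(-\cK^*)$ then $x^*(k)=0$ for all $k\in\cK$, so the ray $\{tx^*\colon t\ge 0\}$ is contained in the corresponding slice of $\cK^*$, and $\beta(E^*,E)$-boundedness forces $x^*=\ceroe$; condition (ii) contains $\sigma(E^*,E)\mbox{-s-ext}(\cK^*)$, whence $\ceroe\in\mbox{ext}(\cK^*)$; and (iii) asserts it outright. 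Thus I may argue with $\cK^*$ pointed throughout.

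The decisive use of the hypothesis is the characterization $\tau=\beta^*(E,E^*)$ of infrabarreledness, equivalently that every $\beta(E^*,E)$-bounded subset of $E^*$ is equicontinuous, i.e. contained in some $V^{\circ}$ with $V\in\tau(\ceroe)$; by Alaoglu such a $V^{\circ}$ is $\sigma(E^*,E)$-compact. This is what lets $\beta(E^*,E)$ play here the role that $\beta(E,E^*)$ played in the primal proof, while all weak-* closures remain inside $E^*$. Concretely, from $\ceroe\in\beta(E^*,E)\mbox{-b}PC(\cK^*)$ one gets $W\in\sigma(E^*,E)(\ceroe)$ with $\cK^*\cap W\subset V^{\circ}$, so $\overline{\cK^*\cap W}^{\sigma(E^*,E)}$ is a $\sigma(E^*,E)$-compact convex set having $\ceroe$ as an extreme point. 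Mirroring Proposition~\ref{Prop_pseudoslices_base_LCS} through the version of Choquet's Lemma (Proposition~\ref{Lemma_Choquet_LCS}) for $\sigma(E^*,E)$-compact convex subsets of $E^*$ --- whose proof is verbatim the same in the pair $\langle E,E^*\rangle$ --- the weak-* open slices of this piece form a neighbourhood base at $\ceroe$. The inclusions (i)$\Rightarrow$(iv)$\Rightarrow$(iii) and (ii)$\Rightarrow$(iii) are then immediate (a finite intersection $\cap_i\{k_i<\lambda\}$ with $\lambda>0$ is a weak-* neighbourhood of $\ceroe$, which together with pointedness yields (iii)), and the substantive implication (iii)$\Rightarrow$(i) reproduces the proof of Theorem~\ref{Teo_dentability_ELC_cond_fuertePC}: Choquet's slice supplies $k\in E$ and $\lambda$ with $\{k\le\lambda\}\cap\cK^*\subset V^{\circ}$, hence $\beta(E^*,E)$-bounded, and the cone ray argument forces $k\in\cK$ (were $x^*(k)<0$ for some $x^*\in\cK^*$, the ray $\{tx^*\colon t\ge 0\}$ would sit in the slice and violate boundedness).

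The main obstacle is exactly this passage from abstract $\beta(E^*,E)$-boundedness to honest weak-* compactness inside $E^*$; it is the equicontinuity reformulation of the previous paragraph, and it is what makes any bidual of $E^*$ unnecessary, so that the $\widehat{\cdot}$- and double-dual-cone hypotheses of Theorem~\ref{Teo_Caract_strong_angle_property} simply evaporate in the dual statement. A point to watch is that $\beta(E^*,E)$ is in general not compatible with $\langle E^*,E\rangle$, so Theorem~\ref{Teo_Caract_strong_angle_property} cannot be cited verbatim; instead I would re-run its argument with ``equicontinuous'' in place of ``$\beta(E,E^*)$-bounded'', the separation and Choquet steps being symmetric in the dual pair. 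Finally, the equivalence of (ii) with the remaining conditions follows, as in Theorem~\ref{Teo_Generalizacion_JMAA_RACSAM_lcs}, from the cone version of Theorem~\ref{Teo_caract_extremal_C_tilde} together with Proposition~\ref{Prop_cierre_w*_cono_es_cono_bidual}, read now in the pair $\langle E,E^*\rangle$; here one uses that $\cK^*$, being an intersection of weak-* closed half-spaces, is already $\sigma(E^*,E)$-closed, so that the extremality of $\ceroe$ in its weak-* closure is nothing but the pointedness of $\cK^*$.
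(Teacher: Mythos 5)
Your proposal is correct and is essentially the paper's own approach: the paper gives no written-out proof of this theorem, justifying it only by the preceding remark that for infrabarreled $E$ one has $\tau=\beta^*(E,E^*)$ (equivalently, every $\beta(E^*,E)$-bounded subset of $E^*$ is equicontinuous, hence contained in a polar $V^{\circ}$ that is $\sigma(E^*,E)$-compact by Alaoglu), which is exactly the mechanism you use to transplant the proof of Theorem~\ref{Teo_Caract_strong_angle_property} into the dual pair $\langle E^*,E\rangle$. Your further observations --- that the bidual-type conditions collapse to pointedness of $\cK^*$ because $\cK^*$ is already $\sigma(E^*,E)$-closed, and that closedness of $\cK$ enters via the bipolar theorem to identify the dual cone of $\cK^*$ with $\cK$ --- are precisely the details the paper leaves implicit in the word ``easily''.
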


We finish this section providing a dual version of Theorems \ref{Teo_Generalizacion_JMAA_RACSAM_lcs} and \ref{Teo_Condi_RACSAM_sufic_elc}. Let us recall that the original topology on a barreled l.c.s. $E$ coincides with both the strong and the Mackey topology --resp. $\beta(E,E^*)$ and $\uptau(E,E^*)$-- and, in addition, every bounded subset of $E^*$ is $\sigma(E^*,E)$ relatively compact. On the other hand, a topology $\tau^*$ on $E^*$ is compatible with the dual pair $<E,E^*>$ if and only if $\sigma(E^*,E) \preceq \tau^* \preceq \uptau(E^*,E)$, where $\uptau(E^*,E)$ denotes the Mackey topology on $E^*$,  (see \cite{Kothe1983} for details). Our next result provides more equivalences to the characterization in \cite[Theorem 3.1]{Qiu2001} of whether a dual cone satisfies the weak property ($\pi$) under the topology $\sigma(E^*,E)$. 
\begin{theo}\label{Tma_carac_dual_cone_barreled_elc}
Let $E$ be a barreled l.c.s., $\cK\subset E$ a cone, and $\tau^*$ a topology on $E^*$ compatible with the dual pair $<E,E^*>$. The following are equivalent.
\item[(i)] $\cK^*$ satisfies the weak property $(\pi)$ in $(E^*,\tau^*)$.
\item[(ii)] $\ceroe \in \tau^*\mbox{-b}PC(\mathcal{K^*})\cap \sigma(E^*,E)\mbox{-s-ext}(\cK^*)$.
\item[(iii)]  $\ceroe \in \tau^*\mbox{-b}PC(\mathcal{K^*})$ and  span$(\mathcal{K})$ is dense in $E$.
\item[(iv)]  $\ceroe \in \tau^*\mbox{-b}PC(\mathcal{K^*})$ and  $\cK^*$ is pointed.
\item[(v)] There exist $n \geq 1$, $x_i \in \cK$, $i \in \{1,\ldots,n\}$, and $\lambda >0$, such that $\cap_{i=1}^n\{x_i < \lambda\}\cap \cK^*$ is bounded.
\end{theo}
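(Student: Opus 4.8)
The plan is to view \((E^*,\tau^*)\) as a Hausdorff locally convex space in its own right and to apply Theorem~\ref{Teo_Generalizacion_JMAA_RACSAM_lcs} to the cone \(\cK^*\subset E^*\). The two structural facts that make this work are that, since \(\tau^*\) is compatible with \(\langle E,E^*\rangle\), the topological dual of \((E^*,\tau^*)\) is exactly \(E\), and that, since \(E\) is barreled, \(\beta(E,E^*)=\tau\), so the bidual of \((E^*,\tau^*)\) is \(E^*\) again. First I would observe that every one of (i)--(v) forces \(\cK^*\) to be pointed: if \(\ell\in\cK^*\cap(-\cK^*)\) is nonzero, then \(\langle\ell,x_i\rangle=0\) for each \(x_i\in\cK\subset\overline{\cK}=(\cK^*)^*\), so the whole line \(\R\ell\) lies in every slice \(\{x_i<\lambda\}\cap\cK^*\), contradicting boundedness in (v), and the other cases are analogous (recall that the weak property \((\pi)\) and the \(\sigma\)-strongly extreme condition both entail pointedness). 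Hence there is no loss in assuming \(\cK^*\) pointed and invoking Theorem~\ref{Teo_Generalizacion_JMAA_RACSAM_lcs}.

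Next I would set up the dictionary between the conclusions of Theorem~\ref{Teo_Generalizacion_JMAA_RACSAM_lcs} applied to \(\cK^*\) and the statements (i)--(v). Condition (i) is literally the weak property \((\pi)\) of \(\cK^*\) in \((E^*,\tau^*)\). Since for cones the point-of-continuity requirement in the definition of bounded point of continuity is redundant, the condition \(\ceroe\in\mbox{bPC}(\cK^*)\) computed in \((E^*,\tau^*)\) coincides with \(\ceroe\in\tau^*\mbox{-b}PC(\cK^*)\); moreover \(\sigma((E^*,\tau^*),E)=\sigma(E^*,E)\), so the \(\sigma\)-strongly extreme condition is exactly the one in (ii). By the bipolar theorem \((\cK^*)^*=\overline{\cK}^{\sigma(E,E^*)}\), whence \(\mbox{span}((\cK^*)^*)\) is \(\beta(E,E^*)\)-dense precisely when \(\mbox{span}(\cK)\) is dense in \(E\), giving (iii); and by Proposition~\ref{Prop_cierre_w*_cono_es_cono_bidual}(i) applied to \(\cK^*\) together with the fact that a dual cone is \(\sigma(E^*,E)\)-closed, \((\cK^*)^{**}=\widetilde{\cK^*}=\cK^*\), so pointedness of the bidual cone of \(\cK^*\) is just pointedness of \(\cK^*\), giving (iv). This matches conditions (i)--(v) of Theorem~\ref{Teo_Generalizacion_JMAA_RACSAM_lcs} with our (i)--(iv) and settles those equivalences.

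The delicate point, which I expect to be the \emph{main obstacle}, is condition (v): the finite-slice condition (vi) of Theorem~\ref{Teo_Generalizacion_JMAA_RACSAM_lcs} dualizes to slices \(\{f_i<\lambda\}\cap\cK^*\) with functionals \(f_i\in(\cK^*)^*=\overline{\cK}\), whereas (v) demands \(x_i\in\cK\). The implication (v)\(\Rightarrow\)(i) is immediate, since \(\sum_i x_i\in\cK\subset\overline{\cK}\) and on \(\cK^*\) the inequality \(\langle x^*,\sum_i x_i\rangle<\lambda\) forces each \(\langle x^*,x_i\rangle<\lambda\), so the single slice \(\{\sum_i x_i\le\lambda/2\}\cap\cK^*\) is bounded. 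For (i)\(\Rightarrow\)(v) I would pass through the interior of the dual cone: by \cite[Theorem~2.1]{Qiu2001} applied to \(\cK^*\) in \((E^*,\tau^*)\), the weak property \((\pi)\) of \(\cK^*\) yields a point in the \(\beta(E,E^*)=\tau\)-interior of \((\cK^*)^*=\overline{\cK}\). Since \(\cK\) is dense in \(\overline{\cK}\), it meets the nonempty open set \(\mbox{int}_\tau(\overline{\cK})\), producing \(g_0\in\cK\cap\mbox{int}_\tau(\overline{\cK})\). Choosing a balanced \(\tau\)-neighbourhood \(N\) of \(\ceroe\) with \(g_0+N\subset\overline{\cK}\), one gets \(\langle x^*,g_0\rangle\ge\sup_{v\in N}|\langle x^*,v\rangle|\) for every \(x^*\in\cK^*\), so \(\{g_0\le\alpha\}\cap\cK^*\subset\alpha N^{\circ}\), which is \(\tau^*\)-bounded because \(N^{\circ}\) is equicontinuous; this gives (v) with \(n=1\) and \(x_1=g_0\in\cK\). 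Bridging this gap between \(\overline{\cK}\) and \(\cK\) is the only step that does not reduce to a direct translation of Theorem~\ref{Teo_Generalizacion_JMAA_RACSAM_lcs}.
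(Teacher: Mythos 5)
Your proposal is correct, and its backbone is exactly the paper's: view \((E^*,\tau^*)\) as a l.c.s. whose dual is \(E\) (compatibility) and whose strong bidual is \(E^*\) again (barreledness gives \(\beta(E,E^*)=\tau\)), then apply Theorem~\ref{Teo_Generalizacion_JMAA_RACSAM_lcs} to \(\cK^*\), translating via \((\cK^*)^*=\overline{\cK}\) and \(\overline{\mbox{span}}(\cK)=\overline{\mbox{span}}(\overline{\cK})\). The differences are in the details, and they favour you. First, the paper never verifies the pointedness of \(\cK^*\) required to invoke Theorem~\ref{Teo_Generalizacion_JMAA_RACSAM_lcs}, whereas you check that each of (i)--(v) forces it. Second, for (iv) the paper argues (iii)\(\Leftrightarrow\)(iv) through the Hahn--Banach fact that \(\mbox{span}(\cK)\) is dense iff \(\cK^*\) is pointed, while you use \((\cK^*)^{**}=\widetilde{\cK^*}=\cK^*\); both are fine. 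Third, and most substantively: the paper dispatches (i)\(\Leftrightarrow\)(ii)\(\Leftrightarrow\)(v) as ``a consequence of Theorem~\ref{Teo_Generalizacion_JMAA_RACSAM_lcs},'' but as you observe, condition (vi) of that theorem applied to \(\cK^*\) yields slicing functionals in \((\cK^*)^*=\overline{\cK}\), not in \(\cK\), so the implication (i)\(\Rightarrow\)(v) genuinely needs an extra step that the paper leaves implicit. Your bridge --- Qiu's Theorem 2.1 giving a nonempty \(\tau\)-interior of \(\overline{\cK}\), density of \(\cK\) in \(\overline{\cK}\) to pick \(g_0\in\cK\cap\mbox{int}_\tau(\overline{\cK})\), and the polar estimate \(\{g_0\le\alpha\}\cap\cK^*\subset\alpha N^{\circ}\) with \(N^{\circ}\) equicontinuous, hence bounded in every compatible topology --- is correct and is exactly what is needed; note that a naive approximation of \(f\in\overline{\cK}\) by elements of \(\cK\) could not work, since \(x^*(f-k)\) cannot be controlled uniformly over the (potentially unbounded) set \(\cK^*\). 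So your write-up is the paper's proof plus the missing justifications, and in that sense is the more complete of the two.
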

\begin{proof}
(i)$\Leftrightarrow$(ii)$\Leftrightarrow$(v) Is a consequence of Theorem \ref{Teo_Generalizacion_JMAA_RACSAM_lcs}.

(i)$\Leftrightarrow$(iii) We apply Theorem \ref{Teo_Generalizacion_JMAA_RACSAM_lcs} and the following equalities $\cK^{**}=\{x\in E \colon x^{*}(x)\geq 0,\, \forall x^{*} \in \cK^*\}=\overline{\cK}$, and $\overline{\mbox{span}}(\cK)=\overline{\mbox{span}}(\overline{\cK})$. Indeed, span$(\overline{\cK})\subset\overline{\mbox{span}}(\cK)$. Hence $\overline{\mbox{span}}(\overline{\cK})\subset\overline{\mbox{span}}(\cK)\subset \overline{\mbox{span}}(\overline{\cK})$, which gives the last equality.

(iii)$\Leftrightarrow$(iv) We apply that span$(\mathcal{K})$ is dense in $E$ if and only if  $\cK^*$ is pointed.
\end{proof}
As a consequence of Theorem \ref{Teo_Condi_RACSAM_sufic_elc}, we have the following.
\begin{theo}\label{Tma_neces_dual_cone_barreled_elc}
Let $E$ be a barreled l.c.s., $\cK\subset E$ a cone, and $\tau^*$ a topology on $E^*$ compatible with the dual pair $<E,E^*>$. Consider the following statements.
\begin{itemize}
\item[(i)] $\cK^*$ satisfies the weak property $(\pi)$ in $(E^*,\tau^*)$.
\item[(ii)] $\cK$ has an order unit.
\item[(iii)] There exists a sequence $\{k_n\}_n\subset \cK$ such that $E=\cup_{n \geq 1}[-nk_n,nk_n]$.
\end{itemize}
Then (i)$\Rightarrow$(ii)$\Rightarrow$(iii). Furthermore, if $E$ is a Baire-like space, then (iii)$\Rightarrow$(i).
\end{theo}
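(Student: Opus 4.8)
The plan is to deduce Theorem~\ref{Tma_neces_dual_cone_barreled_elc} from Theorem~\ref{Teo_Condi_RACSAM_sufic_elc} by applying the latter to the dual cone $\cK^*$ regarded as a cone in a suitable locally convex space. The natural candidate is the dual pair $<E^*,E>$: here $E^*$ carries the topology $\tau^*$ (compatible with $<E,E^*>$), and the crucial point is that, because $E$ is \emph{barreled}, the dual of $(E^*,\tau^*)$ is exactly $E$ itself and the strong topology $\beta(E,E^*)$ on this dual coincides with the original topology $\tau$ on $E$. Consequently the ``dual'' of $\cK^*$ in this setting is $\cK^{**}\cap E = \overline{\cK}$, and an order unit of the relevant dual cone is just an order unit of $\cK$. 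So the strategy is to transcribe the three implications of Theorem~\ref{Teo_Condi_RACSAM_sufic_elc} with $E$ replaced by $(E^*,\tau^*)$ and $\cK$ replaced by $\cK^*$.

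First I would verify that the hypotheses of Theorem~\ref{Teo_Condi_RACSAM_sufic_elc} transfer. That theorem requires a pointed cone in a l.c.s.; $\cK^*$ is a cone in $(E^*,\tau^*)$, and its pointedness is equivalent to $\overline{\operatorname{span}}(\cK)=E$, which we need not assume a priori but which is harvested automatically inside the equivalences (as in Theorem~\ref{Tma_carac_dual_cone_barreled_elc}(iii)$\Leftrightarrow$(iv)). For (i)$\Rightarrow$(ii), Theorem~\ref{Teo_Condi_RACSAM_sufic_elc}(i)$\Rightarrow$(ii) says that if $\cK^*$ satisfies weak property $(\pi)$ in $(E^*,\tau^*)$, then the dual cone of $\cK^*$ has a $\beta$-interior point; using barreledness this dual cone is $\overline{\cK}\subset E$, and an interior point of $\overline{\cK}$ yields an order unit of $\cK$ by the same $y^*\in\lambda V=\lambda(-V)$ argument carried out in the proof of Theorem~\ref{Teo_Condi_RACSAM_sufic_elc}. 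The implication (ii)$\Rightarrow$(iii) is again vacuous: an order unit $k$ gives the constant sequence $k_n=k$.

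For the reverse implication (iii)$\Rightarrow$(i), I would invoke the Baire-like hypothesis directly. Since $E=\cup_{n\ge 1}[-nk_n,nk_n]$ with each order interval convex and balanced, and since $E$ is Baire-like, the very argument in the proof of Theorem~\ref{Teo_Condi_RACSAM_sufic_elc}(iii)$\Rightarrow$(i) produces an $n_0$ and a barrel $V\in\beta(E,E^*)(\ceroe)=\tau(\ceroe)$ with $V\subset[-n_0k_{n_0},n_0k_{n_0}]$, whence $k_{n_0}$ lies in the interior of $\overline{\cK}$; then \cite[Theorem~2.1]{Qiu2001} (applied in the dual pairing) gives that $\cK^*$ enjoys weak property $(\pi)$ in $(E^*,\tau^*)$. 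The only genuine subtlety, and hence the step I expect to be the main obstacle, is confirming that the topology on the relevant ``bidual'' space is correctly identified so that Theorem~\ref{Teo_Condi_RACSAM_sufic_elc} applies verbatim: specifically, that barreledness forces $\beta(E,E^*)=\tau$ and that the strong dual of $(E^*,\tau^*)$ is $E$, so that the abstract statement about $\cK^*$ has an order-unit conclusion phrased in terms of $\cK$ and not some larger completion. Once this identification is pinned down, each implication is an immediate reading of Theorem~\ref{Teo_Condi_RACSAM_sufic_elc}, so no further calculation is required.
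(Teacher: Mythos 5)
Your proposal is correct and takes essentially the same route as the paper: the paper offers no separate argument for this theorem, presenting it exactly as you do, namely as the dualization of Theorem~\ref{Teo_Condi_RACSAM_sufic_elc} applied to the cone $\cK^*$ in $(E^*,\tau^*)$, with compatibility of $\tau^*$ giving that the dual of $(E^*,\tau^*)$ is $E$ and barreledness giving $\beta(E,E^*)=\tau$. The one point you assert rather than prove --- that an order unit of the dual cone of $\cK^*$, which is $\overline{\cK}$ rather than $\cK$, amounts to an order unit of $\cK$ itself --- is glossed over by the paper as well, and it is harmless precisely when $\cK$ is closed (as is explicitly assumed in the companion result, Theorem~\ref{Teo_Caract_strong_angle_property_dual}).
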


\section{Dentability of cones in normed spaces and some related problems}\label{Section_cones_normed_spaces}
In this section we first provide the version for normed spaces of  Theorem~\ref{Teo_weak_property_completitud_pointed_lcs}. It is worth pointing out that in a normed space $X$, a cone $\cK\subset X$ satisfies the weak property ($\pi$) if and only if $\cero \in$ DP$(\cK)$. Let us recall that $\widehat{\mathcal{K}}$ denotes the completion of $\cK$ and $\widehat{X}$ the completion of $X$. So, the next result does not need any proof.
\begin{theo}\label{Teo_denting_equivalente_completitud_cone}
Let $X$ be a normed space and $\mathcal{K}$ a pointed cone. The following are equivalent.
\item[(i)] $\cero \in \mbox{DP}(\mathcal{K})$.
\item[(ii)] $\cero\in \mbox{PC}(\mathcal{K})$ and $\widehat{\mathcal{K}}$ is pointed.
\end{theo}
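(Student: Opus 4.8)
The plan is to obtain this result as the specialization of Theorem~\ref{Teo_weak_property_completitud_pointed_lcs} to the case $E=X$ a normed space, so that essentially no new argument is required beyond reconciling terminology. First I would observe that every normed space is metrizable, hence infrabarreled (as recorded in the discussion following Definition~\ref{Defi_infra_barrelled}), so Theorem~\ref{Teo_weak_property_completitud_pointed_lcs} applies verbatim with $E=X$: it yields that $\cK$ satisfies the weak property $(\pi)$ if and only if $\cero\in\mbox{bPC}(\cK)$ and $\widehat{\cK}$ is pointed.

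Next I would translate each of the three notions appearing there into their normed-space counterparts. On the left-hand side, the weak property $(\pi)$ coincides with $\cero\in\mbox{DP}(\cK)$ in any normed space, as noted at the beginning of this section (equivalently with $\cero\in\mbox{bDP}(\cK)$, since boundedness of a suitable norm-slice is automatic). On the right-hand side I would use that in a normed space every point of continuity is automatically a bounded point of continuity (recorded right after Definition~\ref{Defi_PointContinuity_y_boundedPC}), so $\cero\in\mbox{bPC}(\cK)$ is the same as $\cero\in\mbox{PC}(\cK)$. Chaining these identifications turns the equivalence of Theorem~\ref{Teo_weak_property_completitud_pointed_lcs} into exactly (i)$\Leftrightarrow$(ii).

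The only point that genuinely needs checking --and the closest thing to an obstacle-- is that the symbol $\widehat{\cK}$ carries the same meaning in both statements. In the locally convex setting $\widehat{\cK}=\overline{J_E(\cK)}^{\beta(E^{**},E^*)}$, whereas for a normed space $X$ the text redefines $\widehat{\cK}=\overline{i_X(\cK)}^{(X^{**},\norm^{**})}$. These agree because, for a normed space, the strong topology $\beta(X^{**},X^*)$ on the second dual is precisely the bidual-norm topology and $J_X$ is the canonical isometric inclusion $i_X$; hence the two closures coincide and ``$\widehat{\cK}$ is pointed'' means the same thing in both places. Once this identification is made the statement follows with no further computation, which is why it requires no separate proof.
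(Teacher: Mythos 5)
Your proposal is correct and is exactly the paper's own route: the paper states this theorem ``does not need any proof'' precisely because it is the specialization of Theorem~\ref{Teo_weak_property_completitud_pointed_lcs} to normed spaces, using the same identifications you make (normed spaces are infrabarreled, weak property $(\pi)$ $\Leftrightarrow$ $\cero\in\mbox{DP}(\cK)$, $\mbox{bPC}=\mbox{PC}$, and $\widehat{\cK}$ as the $\beta(X^{**},X^*)$-closure coincides with the norm-completion). Your explicit check that the two meanings of $\widehat{\cK}$ agree is the only detail the paper leaves tacit, and it is handled correctly.
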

Theorem \ref{Teo_denting_equivalente_completitud_cone} connects the main results on dentability of cones in \cite{GARCIACASTANO20151178,GARCIACASTANO2018} simplifying significantly their proofs. Indeed, by Proposition \ref{Prop_C_tilde_pointed_C_bidual_pointed_convex} we have $\mbox{PC}(K)\cap \mbox{ext}(\widehat{\cK})\subset \mbox{ext}(\widetilde{\cK})$. On the other hand, $\cero$ is an extreme point of a cone if and only if such a cone is pointed. As a consequence, assertion (ii) in Theorem \ref{Teo_denting_equivalente_completitud_cone} is equivalent to the conditions $\cero\in \mbox{PC}(\mathcal{K})$ and $\widetilde{\cK}$ pointed. Then, Theorem \ref{Teo_denting_equivalente_completitud_cone} leads to (i)$\Leftrightarrow$(iii) of Theorem 1.1 in \cite{GARCIACASTANO20151178}. On the other hand, Proposition~\ref{Prop_cierre_w*_cono_es_cono_bidual} gives $\widetilde{\cK}=\cK^{**}$ and the equivalence between $\cK^{**}$ pointed and span$(\cK^*)$  dense in $X^*$. Then, Theorem \ref{Teo_denting_equivalente_completitud_cone} results in (i)$\Leftrightarrow$(iii) of Theorem~2 in \cite{GARCIACASTANO2018}.

Next, we state an interesting example which will be used in the following. From now on, $c_{00}$ denotes the vector space of real sequences of finite support, i.e., sequences $(x_n)_n\in \R^{\N}$ such that the set $\{n \in \N\colon x_n\not =0\}$ is finite. We consider the following norms on $c_{00}$, $\parallel (x_n)_n \parallel_{\infty}:=\mbox{max}\{|x_n|\colon n \geq 1\}$ and $\parallel (x_n)_n\parallel :=\mbox{max}\{\parallel (x_n)_n \parallel_{\infty},|\sum_{n\geq 3}x_n|\}$. 

\begin{exam}(\cite[Page 315]{SONG2003308})\label{Example_Song}
In the non-complete normed space $(c_{00},\parallel \cdot \parallel)$, the closed and pointed cone $\mathcal{K}:=\{(x_n)_n \in c_{00} \colon x_1 \geq |x_n|, \, \forall n \geq 3, \mbox{ and } x_2=\sum_{n \geq 3}x_n\}$, verifies that $0 \in $ PC$(\mathcal{K})$ and $0 \not \in $ DP$(\mathcal{K})$. 
\end{exam}

\begin{rem}\label{Remark_Clase_cones_mas_grande_PC_equiv_DP}
In normed spaces, the class of cones having a pointed completion can be considered as the largest class of cones for which  $\cero\in$ PC$(\mathcal{K})$ if and only if $\cero \in$ DP$(\mathcal{K})$.
\end{rem}

\begin{proof}[Proof of Remark \ref{Remark_Clase_cones_mas_grande_PC_equiv_DP}]
Let us see that the completion of the cone $\cK$  in Example \ref{Example_Song} is not pointed. Consider the map $T:(c_{00},\norm)\longrightarrow (c_0,\norm_{\infty})$ defined by $T((x_n)):=(\sum_{n>2}x_n,x_1,x_2,x_3,\ldots)$. It is easy to check that $T$ is an isometry and Im~$T=$span$\{e_2,e_3,e_1+e_3,e_1+e_4,e_1+e_5,\ldots\} \subset c_0$.  Now we consider the extension of $T$, denoted by $\widehat{T}$, defined from the completion of $(c_{00},\norm)$, denoted by $(E,\norm_E)$,  on $c_0$, i.e., $\widehat{T}:(E,\norm_E)\longrightarrow (c_0,\norm_{\infty})$. Indeed, $c_0=\overline{\mbox{Im }T}^{\norm_{\infty}}$ because $e_1=\lim_n \frac{(e_1+e_4)+(e_1+e_5)+\cdots+(e_1+e_{3+n})}{n}$. Besides, $\widehat{T}$ is again an isometry. We consider the following two sequences $(y^n)_{n>2}$, $(z^n)_{n>2} \subset \mathcal{K}$ defined by \[y^n_i:=\begin{cases} \frac{1}{n},& i=1,\\ 1,& i=2,\\ \frac{1}{n},& 2<i\leq 2+n,\\0,&2+n<i, \end{cases}\quad z^n_i:=\begin{cases} \phantom{-}\frac{1}{n},& i=1,\\ -1,& i=2,\\ -\frac{1}{n},& 2<i\leq 2+n,\\\phantom{-}0,&2+n<i, \end{cases}\]
where $y^n=(y^n_i)_{i \geq 1}\in \mathcal{K}$  and $z^n=(z^n_i)_{i \geq 1}\in \mathcal{K}$. Both are Cauchy sequences in $(c_{00},\norm)$, then there exists $y$, $z \in \widehat{\mathcal{K}}$ such that $\lim_ny^n =y$ and $\lim_nz^n =z$. It is clear that $y \not=0\not =z$. On the other hand, $\widehat{T}(y)=\lim_nT(y^n)=e_1+e_3$ and $\widehat{T}(z)=\lim_nT(z^n)=-e_1-e_3$. Therefore $z=\widehat{T}^{-1}(-e_1-e_3)=-y \in -\widehat{\mathcal{K}}$. As a consequence $0\not =z \in \widehat{\mathcal{K}} \cap (-\widehat{\mathcal{K}})$. 
\end{proof}
Regarding Banach spaces A. Daniilidis stated in \cite{Daniilidis2000} that in any Banach space $Y$ and for any closed and pointed cone $\mathcal{K}\subset Y$ we have $0_Y \in \mbox{PC}(\mathcal{K})$ if and only if $0_Y \in \mbox{DP}(\mathcal{K})$. Moreover, Example 1.5 in \cite{GARCIACASTANO20151178} shows that the assumption $\cK$ is closed can not be dropped down. Next, we state the version for Banach spaces of Theorem \ref{Teo_weak_property_completitud_pointed_lcs} which contains, as a particular case, the above mentioned chactacterization of Daniilidis in \cite{Daniilidis2000} but not requiring the condition $\cK$ is closed. It is clear that $X$ is a Banach space if and only if $X=\widehat{X}$; in such a case we have  $\widehat{\mathcal{K}}=\overline{\mathcal{K}}$, i.e., $\widehat{\mathcal{K}}$ becomes the closure of $\mathcal{K}$ under the topology generated by the norm of the Banach space.
\begin{theo}\label{Teo_denting_Banach}
Let $\mathcal{K}$ a pointed cone in a Banach space $Y$. The following are equivalent.
\item[(i)] $0_Y \in \mbox{DP}(\mathcal{K})$.
\item[(ii)] $0_Y\in \mbox{PC}(\mathcal{K})$ and $\overline{\mathcal{K}}$ is pointed.
\end{theo}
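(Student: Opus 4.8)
The plan is to obtain Theorem~\ref{Teo_denting_Banach} as a direct specialization of the normed-space result Theorem~\ref{Teo_denting_equivalente_completitud_cone}. Since every Banach space is in particular a normed space, that theorem applies verbatim to $Y$ and the pointed cone $\cK$, and already yields that $0_Y \in \mbox{DP}(\cK)$ if and only if $0_Y \in \mbox{PC}(\cK)$ and $\widehat{\cK}$ is pointed. Thus the only work left is to rewrite condition (ii) by replacing the completion $\widehat{\cK}$ with the norm-closure $\overline{\cK}$.

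That replacement is exactly the identity recorded immediately before the statement: for a normed space $X$ one has $X = \widehat{X}$ precisely when $X$ is Banach, and in that case $\widehat{\cK} = \overline{\cK}$. The reason is that, $Y$ being complete, the canonical isometric embedding $i_Y : Y \to Y^{**}$ has closed image $i_Y(Y) = \widehat{Y}$; hence the $\norm^{**}$-closure of $i_Y(\cK)$ in $Y^{**}$ is contained in $i_Y(Y)$ and, pulled back by the isometric isomorphism $i_Y$, coincides with the norm-closure $\overline{\cK}$ of $\cK$ in $Y$. I would make this explicit and then simply substitute $\widehat{\cK} = \overline{\cK}$ into condition (ii) of Theorem~\ref{Teo_denting_equivalente_completitud_cone}.

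Since $i_Y$ is a linear isometric isomorphism onto its image, it carries $\cK \cap (-\cK)$ onto $\widehat{\cK} \cap (-\widehat{\cK})$, so $\widehat{\cK}$ is pointed if and only if $\overline{\cK}$ is pointed; this disposes of the pointedness clause under the identification. I do not expect any genuine obstacle here: the statement is a clean corollary, and the only point requiring a line of care is the verification that $\widehat{\cK} = \overline{\cK}$ in the complete case, which follows from the closedness of $i_Y(Y)$ in $Y^{**}$. As a side benefit worth noting, because condition (ii) is phrased with $\overline{\cK}$ rather than $\cK$ itself, the closedness hypothesis on $\cK$ present in Daniilidis' formulation is no longer needed: if $\cK$ happens to be closed then $\overline{\cK} = \cK$ is automatically pointed and (ii) collapses to $0_Y \in \mbox{PC}(\cK)$, recovering his characterization.
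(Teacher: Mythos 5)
Your proposal is correct and takes essentially the same route as the paper: Theorem~\ref{Teo_denting_Banach} is stated there precisely as the specialization of Theorem~\ref{Teo_denting_equivalente_completitud_cone} to the complete case, using the identification $\widehat{\cK}=\overline{\cK}$ recorded immediately before the statement. Your explicit verification that $i_Y(Y)$ is closed in $Y^{**}$, hence that the bidual closure of $i_Y(\cK)$ pulls back to the norm-closure $\overline{\cK}$ with pointedness preserved under the isometry, simply fills in the details the paper dismisses as ``clear''.
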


Next, we discuss some related problems in the literature.  Let us recall that $x \in  \cK$ is said to be a quasi-interior point of $X$  if $\overline{\cup_{n \in \N}[-nx,nx]}=X$, where $[-nx,nx]$ is the order interval given by the order cone, i.e., the set $\{y \in X \colon -nx \leq y \leq nx\}$. The set of all quasi-interior points is denoted by qi$\cK$. If $\cK$ has non empty interior, then the concepts of interior point of $\cK$ and quasi-interior point of $X$ coincide (see \cite{Peressini1967} for details). In \cite{GARCIACASTANO20151178} the authors provided Example 1.5 answering the following problem in the negative.
\begin{prob}(Kountzakis-Polyrakis, \cite[Problem 5]{Kountzakis2006})\label{Prob_Kountzakis}
Let $X$ be a normed space and $\mathcal{K}\subset X$ a pointed cone. Does $\cero\in \mbox{PC}(\mathcal{K})$ imply that qi$\cK^*\not = \emptyset$? 
\end{prob}
In normed spaces, the property $\cero \in$ DP$(\cK)$ is directly connected to the notion of base. 
\begin{defin}
Let $X$ be a normed space and $\mathcal{K} \subset X$ a cone. We say that a convex subset $B \subset \mathcal{K}$ is a base for $\mathcal{K}$ if $\cero \not \in \overline{B}$ and each $k \in \mathcal{K}\setminus\{\cero\}$ has a unique representation of the form $k=\lambda b$ for some $\lambda>0$ and $b \in B$. We say that  $B \subset \mathcal{K}$ is a bounded base for $\cK$ if $B$ is a base for $\mathcal{K}$ and a bounded set on $X$.
\end{defin}
Let us recall that given a cone $\mathcal{K}$, the set of strictly positive functionals on $\mathcal{K}$ is defined by $\mathcal{K}^{\#}:=\{x^{*} \in X^* \colon x^{*}(k) >0, \forall k \in \mathcal{K}\setminus\{\cero\}\}$. It is known that a cone $\mathcal{K}$ has a base if and only if $\mathcal{K}^{\#}\not = \{\emptyset\}$ and that $\mathcal{K}$ has a bounded base if and only if $\cero \in$ DP$(\cK)$. The cone $\cK$ in Example \ref{Example_Song} verifies that $\mathcal{K}^{\#}\not = \emptyset$. As a consequence, by \cite[Theorem 4]{Kountzakis2006}, qi$\mathcal{K}^* = \emptyset$. Therefore, we can state the following.

\begin{rem}\label{Remark_Respuesta_Problemas_Literatura}
The following problems can be answered using Example \ref{Example_Song}. The first one in the positive and the others in the negative.
\end{rem}

\begin{prob}(Gong, \cite[page 629]{Gong1995})\label{Prob_1_Gong}
Let $X$ be a normed space and $\mathcal{K}\subset X$ a closed and pointed cone. Is the condition $0\in \mbox{PC}(\mathcal{K})$ really weaker than the condition that $\cK$ has a bounded base? 
\end{prob}

\begin{prob}(Gong, \cite[page 624]{Gong1995})\label{Prob_2_Gong}
Let $X$ be a normed space and $\mathcal{K}\subset X$ a closed and pointed cone having a base $B$. Does $0\in \mbox{PC}(\mathcal{K})$ imply that $\cK$ has a bounded base?
\end{prob}
 
\begin{prob}(GC-Melguizo Padial, \cite[Problem 1.7]{GARCIACASTANO20151178})\label{Prob_Castano}
Let $X$ be a normed space and $\mathcal{K}\subset X$ a pointed  and closed cone. Does $0\in \mbox{PC}(\mathcal{K})$ imply that qi$\cK^*\not = \emptyset$? 
\end{prob}

Theorem \ref{Teo_denting_equivalente_completitud_cone} and the negative answer to Problem \ref{Prob_2_Gong} leads us to find a particular characterization of the condition $\cero \in \mbox{DP}(\mathcal{K})$ for cones $\mathcal{K}$ having a base. We will devote the rest of this section to such an issue. Let us introduce some more notation. Consider a normed space $X$ and $\mathcal{K} \subset X$ a cone with base $B$. We define the cone generated by $B$ as cone$(B):=\{\lambda b\colon \lambda \geq0,\,b\in B\}$. In addition, we define the generalized recession cone of $B$ by  \({\mbox{R}}(B):=\{{\lim_n}\, t_nb_n \colon t_n \rightarrow 0^+, b_n \in B\}\subset \overline{\mathcal{K}}=\overline{\mbox{cone}}(B)\subset X\). The cone ${\mbox{R}}(B)$ is closed and it is very useful in order to find the closure of $\mbox{cone}(B)$. It is well-known that if $\mathcal{K}$ is a cone with base, then for every $x^{*} \in \mathcal{K}^{\#}$ we have that $B:=\{ k \in \mathcal{K} \colon x^{*}(k)=1\}$ is a base for $\mathcal{K}$. In particular, $\mathcal{K}=\mbox{cone}(B)$. Now consider the completion $(\widehat{X},\widehat{\norm})$ of the normed space $(X,\norm)$, and $\mathcal{K} \subset X$ a cone with base $B$. We define the closed cone \(\mbox{R}_{\widehat{\norm}}(B):=\{\widehat{\lim_n}\, t_nb_n \colon t_n \rightarrow 0^+, b_n \in B\}\subset \widehat{\mbox{cone}}(B)\subset \widehat{X}\),
where by $\widehat{\lim_n}$ we denote the limit on $\widehat{X}$ under the norm ${\widehat{\norm}}$, and by $\widehat{\mbox{cone}}(B)$ the closure on $(\widehat{X},\widehat{\norm})$ of $\mbox{cone}(B)$. Next, the version of Theorem~\ref{Teo_denting_equivalente_completitud_cone} for cones in normed spaces which have a base.
\begin{theo}\label{Teo_Dentability_cone_with_base}
Let $X$ be a normed space and $\mathcal{K} \subset X$ a cone with base $B$. The following are equivalent.
\begin{itemize}
\item[(i)] $\cero \in \mbox{DP}(\mathcal{K})$.
\item[(ii)] $\cero \in \mbox{PC}(\mathcal{K})$ and $\mbox{R}_{\widehat{\norm}}(B)$ is pointed.
\end{itemize}
\end{theo}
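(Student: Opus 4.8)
The plan is to deduce the result from the normed-space characterization already available in Theorem~\ref{Teo_denting_equivalente_completitud_cone}. Since $\mathcal{K}$ has a base it is pointed: any functional strictly separating $\cero$ from $\overline{B}$ is strictly positive on $\mathcal{K}\setminus\{\cero\}$, so $\mathcal{K}^{\#}\neq\emptyset$ and $\mathcal{K}$ is pointed. Thus Theorem~\ref{Teo_denting_equivalente_completitud_cone} applies and shows that (i) is equivalent to the conjunction ``$\cero\in\mbox{PC}(\mathcal{K})$ and $\widehat{\mathcal{K}}$ is pointed''. As the clause $\cero\in\mbox{PC}(\mathcal{K})$ appears verbatim in (ii), the whole statement reduces to a single purely geometric claim about the completion: $\widehat{\mathcal{K}}$ is pointed if and only if $\mbox{R}_{\widehat{\norm}}(B)$ is pointed. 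The remainder of the argument is devoted to this equivalence.

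The key step would be a structural description of $\widehat{\mathcal{K}}=\widehat{\mbox{cone}}(B)$, namely $\widehat{\mathcal{K}}=\mbox{cone}(\overline{B}^{\widehat{X}})\cup\mbox{R}_{\widehat{\norm}}(B)$. Both right-hand pieces lie in $\widehat{\mathcal{K}}$: the recession cone by its very definition, and $\mbox{cone}(\overline{B}^{\widehat{X}})$ because each $b\in\overline{B}^{\widehat{X}}$ is a $\widehat{\norm}$-limit of points of $B$. For the reverse inclusion I would take $x=\widehat{\lim}_n \lambda_n b_n\neq\cero$ with $\lambda_n\geq 0$ and $b_n\in B$. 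Since $\cero\notin\overline{B}$ we have $\delta:=\inf\{\|b\|\colon b\in B\}>0$, and from $\lambda_n\|b_n\|\to\|x\|$ together with $\|b_n\|\geq\delta$ the sequence $(\lambda_n)$ is bounded. Passing to a subsequence with $\lambda_n\to\lambda$, the case $\lambda>0$ forces $b_n\to\lambda^{-1}x$ and hence $x\in\mbox{cone}(\overline{B}^{\widehat{X}})$, while the case $\lambda=0$ places $x$ in $\mbox{R}_{\widehat{\norm}}(B)$ directly.

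Next I would produce the separating functional that distinguishes the two pieces. Because $\cero\notin\overline{B}$, strict separation yields $x^{*}\in X^{*}$ and $\alpha>0$ with $x^{*}(b)\geq\alpha$ for every $b\in B$; let $x^{*}$ also denote its unique continuous extension to $\widehat{X}\subset X^{**}$. Passing to limits gives $x^{*}(b)\geq\alpha>0$ for all $b\in\overline{B}^{\widehat{X}}$, so every nonzero element of $\mbox{cone}(\overline{B}^{\widehat{X}})$ satisfies $x^{*}>0$; and for $y=\widehat{\lim}_n t_nb_n\in\mbox{R}_{\widehat{\norm}}(B)$ the inequalities $t_nx^{*}(b_n)\geq t_n\alpha\geq 0$ yield $x^{*}(y)\geq 0$ in the limit. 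Hence the whole cone $\widehat{\mathcal{K}}$ lies in the half-space $\{x^{*}\geq 0\}$, whereas $\mbox{cone}(\overline{B}^{\widehat{X}})$ meets the hyperplane $\{x^{*}=0\}$ only at $\cero$.

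With these facts the equivalence is immediate. If $\widehat{\mathcal{K}}$ is pointed then so is its subset $\mbox{R}_{\widehat{\norm}}(B)$. Conversely, assume $\widehat{\mathcal{K}}$ is not pointed, say $x\neq\cero$ with $x,-x\in\widehat{\mathcal{K}}$; then $x^{*}(x)\geq 0$ and $x^{*}(-x)\geq 0$ force $x^{*}(x)=0$, so by the previous paragraph neither $x$ nor $-x$ can lie in $\mbox{cone}(\overline{B}^{\widehat{X}})\setminus\{\cero\}$, whence $x,-x\in\mbox{R}_{\widehat{\norm}}(B)$ and $\mbox{R}_{\widehat{\norm}}(B)$ is not pointed. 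I expect the main obstacle to be exactly the decomposition of $\widehat{\mathcal{K}}$: the given base $B$ may be unbounded and its algebraic base functional need not be continuous, so one cannot reduce to a continuous functional that is constant on $B$. The crucial observation making the unbounded case work is that the separating functional $x^{*}$ need only be shown \emph{nonnegative} (not identically zero) on $\mbox{R}_{\widehat{\norm}}(B)$, which is already enough to separate the two pieces of the completion.
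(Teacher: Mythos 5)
Your proposal is correct, and it shares the paper's skeleton while replacing its key final argument by a different mechanism. Like the paper, you reduce via Theorem~\ref{Teo_denting_equivalente_completitud_cone} to the single claim that $\widehat{\mathcal{K}}$ is pointed if and only if $\mbox{R}_{\widehat{\norm}}(B)$ is pointed, and like the paper (Proposition~\ref{Prop_clausura_cono_recesion}) you establish the decomposition $\widehat{\mbox{cone}}(B)=\mbox{cone}(\widehat{B})\cup \mbox{R}_{\widehat{\norm}}(B)$; your proof of the boundedness of the scalars $(\lambda_n)$ via $\inf\{\parallel b\parallel\colon b \in B\}>0$ is a mild streamlining of the paper's contradiction argument. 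The genuine divergence is in the pointedness transfer. The paper argues by convexity inside $B$: it proves separately that $\mbox{cone}(\widehat{B})$ is pointed, and then kills the mixed case $x\in \mbox{cone}(\widehat{B})\cap(-\mbox{R}_{\widehat{\norm}}(B))$, $x\neq\cero$, by exhibiting the convex combinations $\bar{b}_n:=\frac{\lambda}{\lambda+\lambda_n}b_n+\frac{\lambda_n}{\lambda+\lambda_n}b^*_n\in B$ with $\widehat{\parallel \bar{b}_n\parallel}\to 0$, contradicting $\cero\notin\widehat{B}$. You instead invoke Hahn--Banach strict separation of $\cero$ from $\overline{B}$ to get $x^*$ with $x^*\geq\alpha>0$ on $B$, extend it to $\widehat{X}$, and observe that $x^*>0$ on $\mbox{cone}(\widehat{B})\setminus\{\cero\}$ while $x^*\geq 0$ on all of $\widehat{\mathcal{K}}$; any non-pointedness witness then lands in the kernel of $x^*$ and hence, by the decomposition, in $\mbox{R}_{\widehat{\norm}}(B)\cap(-\mbox{R}_{\widehat{\norm}}(B))$. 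Your route buys a unified case analysis: the single functional simultaneously yields pointedness of $\mathcal{K}$ (needed to legitimately apply Theorem~\ref{Teo_denting_equivalente_completitud_cone}, a hypothesis the paper leaves implicit), pointedness of $\mbox{cone}(\widehat{B})$, and the mixed case, and it localizes exactly where non-pointedness can live. The paper's route buys elementarity: it never needs the separation theorem, only convexity of $B$ and $\cero\notin\widehat{B}$. One cosmetic point in your decomposition: in the case $\lambda=0$ you should discard the (at most finitely many) indices with $\lambda_n=0$ --- otherwise $x=\cero$ --- so that the scalars are strictly positive as the definition of $\mbox{R}_{\widehat{\norm}}(B)$ requires; this is immediate but worth saying.
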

Let us note that the condition R$(B)$ pointed does not imply $\mbox{R}_{\widehat{\norm}}(B)$ pointed. For example, the cone $\cK$ in Example~\ref{Example_Song} has $B=\{(x_n)_n \in \cK \colon x_1=1\}$ as a base, ${\mbox{R}}(B)=\{0_X\}$ and $\mbox{R}_{\widehat{\norm}}(B)=\{k\widehat{T}^{-1}(e_1+e_3)\colon k \in \R\}$ is not pointed. On the other hand, even $\mbox{R}(B)$ may not be pointed. For example, the cone $\mathcal{K}=\{(x,y)\in \R^2\colon x>0\}\cup\{(0,0)\}\subset \R^2$ has $B=\{(1,y)\colon y \in \R\}$ as a base and ${\mbox{R}}(B)=\mbox{R}_{\widehat{\norm}}(B)=\{(0,y)\colon y \in \R\}$ is not pointed. 
In the proof of Theorem~\ref{Teo_Dentability_cone_with_base} we will use the following proposition.
\begin{pro}\label{Prop_clausura_cono_recesion}
Let $X$ be a normed space, $\widehat{X}$ its completion, and $\mathcal{K} \subset X$ a cone with base $B$. Then $\widehat{\mbox{cone}}(B)=\mbox{cone}(\widehat{B})\cup \mbox{R}_{\widehat{\norm}}(B)$ and $\mbox{cone}(\widehat{B})$ is pointed.
\end{pro}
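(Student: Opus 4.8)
The plan is to distill from the defining property $\cero \notin \overline{B}$ of a base a single strictly positive functional on the completion, and then let it do double duty: bounding the scalars that occur in limits of rays, and forcing pointedness. First I would strictly separate the compact set $\{\cero\}$ from the closed convex set $\overline{B}$ by Hahn--Banach, obtaining (after replacing $f$ by $-f$ if necessary) some $f \in X^*$ and $\alpha > 0$ with $f(b) \geq \alpha$ for every $b \in B$. Extending $f$ by continuity to $\widehat{f} \in \widehat{X}^*$ and passing to closures yields $\widehat{f}(b) \geq \alpha > 0$ for all $b \in \widehat{B} = \overline{B}^{\widehat{X}}$; in particular $\cero \notin \widehat{B}$, so any ray $\lambda b$ with $b \in \widehat{B}$ meets $\cero$ only when $\lambda = 0$.

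For the inclusion $\widehat{\mbox{cone}}(B) \subseteq \mbox{cone}(\widehat{B})\cup \mbox{R}_{\widehat{\norm}}(B)$ I would use that $\widehat{X}$ is metrizable, so every $x \in \widehat{\mbox{cone}}(B)$ is of the form $x = \widehat{\lim_n}\,\lambda_n b_n$ with $\lambda_n \geq 0$ and $b_n \in B$. The case $x = \cero$ lies in $\mbox{cone}(\widehat{B})$ trivially. If $x \neq \cero$, applying $\widehat{f}$ gives $\lambda_n \alpha \leq \widehat{f}(\lambda_n b_n) \to \widehat{f}(x)$, so $(\lambda_n)$ is bounded; passing to a subsequence I may assume $\lambda_n \to \lambda \geq 0$. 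If $\lambda > 0$, then $b_n = (\lambda_n b_n)/\lambda_n \to x/\lambda$, whence $x/\lambda \in \overline{B}^{\widehat{X}} = \widehat{B}$ and $x = \lambda(x/\lambda) \in \mbox{cone}(\widehat{B})$. If $\lambda = 0$, then $x \neq \cero$ forces $\lambda_n > 0$ along the tail, so $x = \widehat{\lim_n}\,\lambda_n b_n$ with $\lambda_n \to 0^+$, i.e. $x \in \mbox{R}_{\widehat{\norm}}(B)$.

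The reverse inclusion is immediate: $\mbox{R}_{\widehat{\norm}}(B) \subseteq \widehat{\mbox{cone}}(B)$ holds by definition, while each $\lambda b \in \mbox{cone}(\widehat{B})$ equals $\widehat{\lim_n}\,\lambda b_n$ for some $b_n \in B$ with $b_n \to b$, hence lies in $\widehat{\mbox{cone}}(B)$. For pointedness of $\mbox{cone}(\widehat{B})$, I would suppose $x \in \mbox{cone}(\widehat{B}) \cap (-\mbox{cone}(\widehat{B}))$ with $x \neq \cero$ and write $x = \lambda b$, $-x = \mu b'$ with $\lambda, \mu > 0$ and $b, b' \in \widehat{B}$, so that $\lambda b + \mu b' = \cero$. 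Applying $\widehat{f}$ then yields $0 = \lambda \widehat{f}(b) + \mu \widehat{f}(b') \geq (\lambda + \mu)\alpha > 0$, a contradiction; thus $\mbox{cone}(\widehat{B}) \cap (-\mbox{cone}(\widehat{B})) = \{\cero\}$.

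The main obstacle I anticipate is the scalar dichotomy in the first inclusion: one must establish boundedness of $(\lambda_n)$ and then cleanly separate the genuinely conic limits ($\lambda > 0$) from the recession limits ($\lambda = 0$), taking care that in the recession case the tail satisfies $\lambda_n > 0$ so the limit is a bona fide element of $\mbox{R}_{\widehat{\norm}}(B)$. Everything else reduces to continuity of $\widehat{f}$ and the elementary algebra of limits in $\widehat{X}$; the single separating functional $\widehat{f}$ is precisely what unifies the boundedness estimate with the pointedness argument.
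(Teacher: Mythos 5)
Your proof is correct, and the skeleton of your argument for the set equality is the same as the paper's: represent $x\in\widehat{\mbox{cone}}(B)$ as a sequential limit $\widehat{\lim_n}\,\lambda_nb_n$ with $b_n\in B$, show that $(\lambda_n)_n$ is bounded, pass to a convergent subsequence $\lambda_n\to\lambda\geq0$, and conclude $x\in\mbox{cone}(\widehat{B})$ if $\lambda>0$ and $x\in\mbox{R}_{\widehat{\norm}}(B)$ if $\lambda=0$. The genuine difference is the tool used for the two delicate sub-steps. The paper never leaves the primal side and uses nothing beyond the definition of a base: boundedness of the scalars is proved by contradiction (if $\lambda_n\to+\infty$, then $b_n=\frac{\lambda_nb_n}{\lambda_n}\to\cero$, so $\cero\in\overline{B}$), and pointedness of $\mbox{cone}(\overline{B})$ follows from convexity of $B$ (a nonzero common point of $\mbox{cone}(\overline{B})$ and $-\mbox{cone}(\overline{B})$ would force the convex combinations $\frac{\lambda}{\lambda+\mu}b_n+\frac{\mu}{\lambda+\mu}\tilde{b}_n\in B$ to converge to $\cero$). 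You instead spend one application of Hahn--Banach to separate $\cero$ strictly from the closed convex set $\overline{B}$, obtaining $\widehat{f}\in\widehat{X}^*$ with $\widehat{f}\geq\alpha>0$ on $\widehat{B}$, and this single functional does both jobs: it gives the quantitative bound $\lambda_n\leq\widehat{f}(\lambda_nb_n)/\alpha$ (bounded, since the numerators converge) and a one-line pointedness argument. What your route buys is that it makes explicit the duality hidden in the paper's geometry: the restriction of $\widehat{f}$ to $X$ is exactly an element of $\mathcal{K}^{\#}$ bounded away from zero on $B$, the object the paper only alludes to when recalling that a cone has a base if and only if $\mathcal{K}^{\#}\neq\emptyset$. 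What the paper's route buys is elementarity (no separation theorem, no dual space) together with a small structural economy you forgo: it first observes that the statement concerns an arbitrary ambient normed space, proving $\overline{\mbox{cone}}(B)=\mbox{cone}(\overline{B})\cup\mbox{R}(B)$ and pointedness of $\mbox{cone}(\overline{B})$ in general, and then specializes the ambient space to $\widehat{X}$, whereas you work in $\widehat{X}$ from the start. Both proofs are complete; yours has no gap.
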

\begin{proof}
It is sufficient to show that for any cone $\mathcal{K}$ with a base $B$ in an arbitrary normed space $X$, we have $\overline{\mbox{cone}}(B)=\overline{\mbox{cone}}(\overline{B})=\mbox{cone}(\overline{B})\cup \mbox{R}(B)$ and, in addition, $\mbox{cone}(\overline{B})$ is pointed. Let us check first the equality. Since $\overline{B}\subset\overline{\mbox{cone}}(B)$, we have $\mbox{cone}(\overline{B})\subset \overline{\mbox{cone}}(B)$, thus $\overline{\mbox{cone}}(\overline{B}) \subset \overline{\mbox{cone}}(B)$, which provides \(\overline{\mbox{cone}}(B)=\overline{\mbox{cone}}(\overline{B}) \supset \mbox{cone}(\overline{B})\cup \mbox{R}(B)\). Now the contrary inclusion. For that purpose we fix an arbitrary $x \in \overline{\mbox{cone}}(B)$, then there exists $(\lambda_n b_n)_{n} \subset \mbox{cone}(B)$ such that $x =\lim_n \lambda_n b_n$. We claim that the sequence $(\lambda_n)_n \subset [0,+\infty)$ is bounded. On the contrary, if $\lim_n \lambda_n=+\infty$ (or if it happens for some subsequence), then $\lim_n b_n=\lim_n\frac{x}{\lambda_n}=\cero$ which implies $\cero \in \overline{B}$, a contradiction. Then $(\lambda_n)_n \subset [0,+\infty)$ has a convergent subsequence. For simplicity of notation we assume that $\lim_n \lambda_n=\lambda \geq 0$. If $\lambda=0$, then $x \in \mbox{R}(B)$. In case $\lambda>0$ we have $\lim_n b_n=\lim_n\frac{x}{\lambda_n}=\frac{x}{\lambda} \in \overline{B}$, then $x \in \mbox{cone}(\overline{B})$.

We finish the proof showing that $\mbox{cone}(\overline{B})$ is pointed. Assume the contrary, hence there exists some $x \in \mbox{cone}(\overline{B})\cap (-\mbox{cone}(\overline{B}))$. Then, for some $\lambda \geq 0$, $\mu\geq 0$, $(b_n)_n$, $(\tilde{b}_n)_n \subset B$, we have $x=\lambda \lim_nb_n=-\mu \lim_n\tilde{b}_n$. If $\lambda$, $\mu \not =0$, then $0=\lim_n\left(\frac{\lambda}{\lambda+\mu}b_n+\frac{\mu}{\lambda+\mu}\tilde{b}_n\right) \in \overline{B}$, a contradiction.
\end{proof}

\begin{proof}[Proof of Theorem \ref{Teo_Dentability_cone_with_base}]
After Theorem \ref{Teo_denting_equivalente_completitud_cone}, we only need to prove that the cone, $\widehat{\mbox{cone}}(B)$, is pointed if and only if the cone, $\mbox{R}_{\widehat{\norm}}(B)$, is pointed. The implication ``$\Rightarrow$" is clear because $\mbox{R}_{\widehat{\norm}}(B)\subset \widehat{\mbox{cone}}(B)$. Now, the implication ``$\Leftarrow$". We pick an arbitrary $x\in \widehat{\mbox{cone}}(B) \cap (-\widehat{\mbox{cone}}(B))$ and we will show that $x=\cero$.   If either $x\in \mbox{cone}(\widehat{B})\cap (-\mbox{cone}(\widehat{B}))$ or $x \in \mbox{R}_{\widehat{\norm}}(B) \cap (-\mbox{R}_{\widehat{\norm}}(B))$, then $x=\cero$ because both cones, $\mbox{cone}(\widehat{B})$ and $\mbox{R}_{\widehat{\norm}}(B)$, are pointed. Then, we assume that $x \in \mbox{cone}(\widehat{B})\cap (-\mbox{R}_{\widehat{\norm}}(B))$. Then, there exist $(b_n)_n$, $(b_n^*)_n \subset B$, $\lambda \geq 0$ and $(\lambda_n)_n\subset \R$ such that $\lambda_n>0$ and $\lim_n \lambda_n=0$ verifying $x=\lambda \widehat{\lim_n}b_n=-\widehat{\lim_n} \lambda_nb^*_n$. If $x \not =0$, then $\lambda>0$ and we consider $\bar{b}_n:=\frac{\lambda}{\lambda+\lambda_n}b_n+\frac{\lambda_n}{\lambda+\lambda_n}b^*_n\in B$. Clearly $\lim_n \widehat{\parallel \bar{b}_n\parallel}=\lim_n\frac{\parallel \lambda b_n+\lambda_nb^*_n\parallel}{\lambda+\lambda_n}=0$, a contradiction because $\cero \not \in \widehat{B}$ (since $B$ is a base for $\mathcal{K}$).
\end{proof}

\section*{Acknowledgements}
The authors Fernando Garc\'ia-Casta\~no and M. A. Melguizo Padial have been supported by MINECO and FEDER (MTM2017-86182-P). The authors wish to express their thanks to Antonio Avil\'es and Jos\'e Rodr\'iguez for their help with the proof of Remark \ref{Remark_Clase_cones_mas_grande_PC_equiv_DP}. The authors also gratefully acknowledges the referees for their suggestions that helped to improve the manuscript.

\end{document}